\documentclass[onecolumn]{autart}    

\PassOptionsToPackage{svgnames}{xcolor}

\usepackage{titlesec}

\usepackage{graphicx}
\graphicspath{ {Images/} }

\usepackage{hyperref}
\hypersetup{colorlinks=true,
            linkcolor=blue,
            citecolor=[rgb]{1,0.25,0},
            urlcolor=blue,
            }

\usepackage{float}           
\usepackage{caption}
\usepackage{subcaption}
\usepackage{setspace}
\usepackage{tocloft}

\usepackage{bigints}
\usepackage{amsmath}
\usepackage{amsfonts}
\usepackage{amssymb}
\usepackage{amsbsy}
\usepackage{mathtools, nccmath, textcomp}
\usepackage{multirow}

\usepackage{enumitem}
\usepackage{booktabs}

\usepackage{tabularx}
\usepackage{xcolor}

\usepackage{tikz}
\usetikzlibrary{positioning}
\usetikzlibrary{shapes.geometric,arrows.meta,chains,positioning,shapes.symbols}
\usetikzlibrary{shadings,shadows}

\usepackage{fixmath}
\usepackage{bm}
\usepackage{mathrsfs}
\usepackage{longtable}
\usepackage{soul}
\usepackage{lmodern}
\usepackage[T1]{fontenc}
\usepackage{multicol}

\usepackage{tcolorbox}
\tcbuselibrary{skins,breakable}

\usepackage{bbm}


\font\f=cmbsy7 at2.5pt
\def\ointp{\mathop
   {\oint \kern-5.55pt\raise1.8pt\hbox{%
    \rlap{\f\char"5E}\kern.1pt\rlap{\f\char"5E}} \hspace{1mm}  }\limits
    }
    
\font\f=cmbsy7 at2.5pt
\def\ointn{\mathop
   {\oint \kern-5.45pt\raise1.8pt\hbox{%
    \rlap{\f\char"5F}\kern.1pt\rlap{\f\char"5F}} \hspace{1mm}  }\limits
    }


%
    {\endtcolorbox}

    {\endtcolorbox}

    {\endtcolorbox}

\makeatletter
\newlength\xvec@height%
\newlength\xvec@depth%
\newlength\xvec@width%
\newcommand{\xvec}[2][]{%
  \ifmmode%
    \settoheight{\xvec@height}{$#2$}%
    \settodepth{\xvec@depth}{$#2$}%
    \settowidth{\xvec@width}{$#2$}%
  \else%
    \settoheight{\xvec@height}{#2}%
    \settodepth{\xvec@depth}{#2}%
    \settowidth{\xvec@width}{#2}%
  \fi%
  \def\xvec@arg{#1}%
  \def\xvec@dd{:}%
  \def\xvec@d{.}%
  \raisebox{.2ex}{\raisebox{\xvec@height}{\rlap{%
    \kern.05em
    \begin{tikzpicture}[scale=1]
    \pgfsetroundcap
    \draw (.05em,0)--(\xvec@width-.05em,0);
    \draw (\xvec@width-.05em,0)--(\xvec@width-.15em, .075em);
    \draw (\xvec@width-.05em,0)--(\xvec@width-.15em,-.075em);
    \ifx\xvec@arg\xvec@d%
      \fill(\xvec@width*.45,.5ex) circle (.5pt);%
    \else\ifx\xvec@arg\xvec@dd%
      \fill(\xvec@width*.30,.5ex) circle (.5pt);%
      \fill(\xvec@width*.65,.5ex) circle (.5pt);%
    \fi\fi%
    \end{tikzpicture}%
  }}}%
  #2%
}
\makeatother

\def\onedot{$\mathsurround0pt\ldotp$}
\def\cdddot#1{
  \mathbin{\vcenter{\baselineskip.67ex
    \hbox{\onedot}\hbox{\onedot}\hbox{\onedot}%
  }}%
}

\makeatletter
\def\underbracex#1#2{\mathop{\vtop{\m@th\ialign{##\crcr
   $\hfil\displaystyle{#2}\hfil$\crcr
   \noalign{\kern3\p@\nointerlineskip}%
   #1\crcr\noalign{\kern3\p@}}}}\limits}

\def\upbracefilla{$\m@th \setbox\z@\hbox{$\braceld$}%
  \bracelu\leaders\vrule \@height\ht\z@ \@depth\z@\hfill 
\kern\p@\vrule \@width\p@\kern\p@\vrule \@width\p@\kern\p@\vrule \@width\p@
$}

\def\upbracefillb{$\m@th \setbox\z@\hbox{$\braceld$}%
\vrule \@width\p@\kern\p@\vrule \@width\p@\kern\p@\vrule \@width\p@\kern\p@
 \leaders\vrule \@height\ht\z@ \@depth\z@\hfill\bracerd
  \braceld\leaders\vrule \@height\ht\z@ \@depth\z@\hfill
\kern\p@\vrule \@width\p@\kern\p@\vrule \@width\p@\kern\p@\vrule \@width\p@
$}

\def\upbracefillc{$\m@th \setbox\z@\hbox{$\braceld$}%
\vrule \@width\p@\kern\p@\vrule \@width\p@\kern\p@\vrule \@width\p@\kern\p@
\leaders\vrule \@height\ht\z@ \@depth\z@\hfill
\kern\p@\vrule \@width\p@\kern\p@\vrule \@width\p@\kern\p@\vrule \@width\p@
$}

\def\upbracefilld{$\m@th \setbox\z@\hbox{$\braceld$}%
\vrule \@width\p@\kern\p@\vrule \@width\p@\kern\p@\vrule \@width\p@\kern\p@
 \leaders\vrule \@height\ht\z@ \@depth\z@\hfill\braceru$}

\def\upbracefillbd{$\m@th \setbox\z@\hbox{$\braceld$}%
\vrule \@width\p@\kern\p@\vrule \@width\p@\kern\p@\vrule \@width\p@\kern\p@
\bracerd\braceld
 \leaders\vrule \@height\ht\z@ \@depth\z@\hfill\braceru$}

\makeatother


\renewcommand{\vec}[1]{\xvec[]{#1}}

\newcommand*{\B}[1]{\ifmmode\bm{#1}\else\textbf{#1}\fi}
\newcommand{\HS}[1]{\hspace{#1 mm}}


\newcommand{\ub}[2]{\underbrace{#1}_{#2}}

\newcommand{\p}{\partial}
\newcommand{\var}{\delta}
\newcommand{\dint}{\displaystyle\int}

\newcommand{\mtx}[1]{\mbox{\footnotesize$#1$}}

\newcommand{\R}[1]{\mathbb{R}^{#1}}

\newcommand{\F}{\mathcal{F}}

\newcommand{\x}[1]{\zeta_{#1}}
\newcommand{\X}{\mathtt{x}}
\newcommand{\CX}{\mathtt{X}}
\newcommand{\sx}{\mathtt{s}}
\newcommand{\Csx}{\mathtt{S}}

\newcommand{\Grad}{\mbox{Grad}}
\newcommand{\Div}{\mbox{Div}}

\newcommand{\q}{\epsilon}

\newcommand{\bsm}[1]{ \begin{bsmallmatrix} #1 \end{bsmallmatrix} }

\newcommand{\blanco}[1]{\textcolor{white}{#1}}

\def\doubleunderline#1{\underline{\underline{#1}}}
\def\rddots#1{\cdot^{\cdot^{\cdot^{#1}}}}

\theoremstyle{definition} 

\theoremstyle{definition} 
\newtheorem{assumption}{\normalfont\textbf{Assumption}}
\theoremstyle{definition} 
\newtheorem{definition}{\normalfont\textbf{Definition}}
\theoremstyle{definition} 
\newtheorem{theorem}{\normalfont\textbf{Theorem}}
\theoremstyle{definition} 
\newtheorem{lemma}{\normalfont\textbf{Lemma}}
\theoremstyle{definition} 
\newtheorem{proposition}{\normalfont\textbf{Proposition}}
\theoremstyle{definition} 
\newtheorem{corollary}{\normalfont\textbf{Corollary}}
\theoremstyle{definition} 
\newtheorem{remark}{\normalfont\textbf{Remark}}


%

\newenvironment{proof}[1][Proof]{\textbf{#1.} }{\ \rule{0.5em}{0.5em}}


\begin{document}

\begin{frontmatter}

\title{Port-Hamiltonian modeling of multidimensional flexible mechanical structures defined by linear elastic relations\thanksref{footnoteinfo}} 

\thanks[footnoteinfo]{Corresponding author C.Ponce. Tel. +33 751145392.}

\author[First,Second]{Cristobal Ponce}\ead{cristobal.ponces@usm.cl,cristobal.ponce@femto-st.fr},  
\author[First]{Yongxin Wu}\ead{yongxin.wu@femto-st.fr}, 
\author[First]{Yann Le Gorrec}\ead{legorrec@femto-st.fr},
\author[Second]{Hector Ramirez}\ead{hector.ramireze@usm.cl}

\address[First]{Supmicrotech, FEMTO-ST AS2M department, 24 rue Alain Savary, F-25000 Besancon, France. }                                                
\address[Second]{Department of Electronic Engineering, Universidad Tecnica Federico Santa Maria, Av. Espana 1680, Valparaiso, Chile. }

\begin{abstract}                          
This article presents a systematic methodology for modeling a class of flexible multidimensional mechanical structures defined by linear elastic relations that directly allows to obtain their infinite-dimensional port-Hamiltonian representation. The approach is restricted to systems based on a certain class of kinematic assumptions. However this class encompasses a wide range of models currently available in the literature, such as $\ell$-dimensional elasticity models (with $\ell$ = 1,2,3), vibrating strings, torsion in circular bars, classical beam and plate models, among others. The methodology is based on Hamilton's principle for a continuum medium which allows defining the energy variables of the port-Hamiltonian system, and also on a generalization of the integration by parts theorem, which allows characterizing the skew-adjoint differential operator and boundary inputs and boundary outputs variables. To illustrate this method, the plate modeling process based on Reddy's third-order shear deformation theory is presented as an example. To the best of our knowledge, this is the first time that an infinite-dimensional port-Hamiltonian representation of this system is presented in the literature.
\end{abstract}

\begin{keyword}
Infinite-dimensional systems \sep  port-Hamiltonian systems \sep modeling \sep Hamilton's principle. 
\end{keyword}

\end{frontmatter}

\section{Introduction}\label{sec:Introduction}

\noindent The modeling of mechanical systems is traditionally derived through two fundamental approaches: Newton's method and d'Alembert's principle. From the Newtonian perspective, the equations of motion are obtained from the forces acting on the system, while in d'Alembert's principle, also known as the principle of virtual work, the equations of motion are obtained through an energetic approach, where the concept of virtual work defined as the dot product between an applied force and a small virtual displacement that does not violate the constraints imposed on the system is used. Applying d'Alembert's principle to a mechanical system leads to the well-known Euler-Lagrange (E-L) equations, where the Lagrangian functional that depends on the energy of the system is the key quantity used. This approach offers a more compact description of the system, which allows the use of variational techniques to obtain approximate and analytical solutions \cite{goldstein2002classical,lanczos2012variational}. Therefore, the E-L equations can also be obtained from the most fundamental principles of physics, such as that of action, which establish that the real trajectory followed by a mechanical system is the one for which the integral of the action, defined as the integral of the Lagrangian along the trajectory, is minimal \cite{landau2013course}. The connection between the E-L equations and Hamiltonian mechanics is evidenced through the Legendre transform and the Poisson structure \cite{arnol2013mathematical,hassani2013mathematical}. In the classical Hamiltonian approach, the equations describe the temporal evolution of the system through an antisymmetric matrix called symplectic matrix, which is nothing more than a representation of the set of transformations that leaves the Poisson brackets invariant \cite{hassani2013mathematical}. In addition, the Poisson structure defines the algebra of the brackets and allows the study of the dynamic properties of the system \cite{de2011methods}. In the present work we focus on port-Hamiltonian systems (PHS), which are a class of geometrically defined open physical systems with ports that came from the network modeling of physical systems through bond-graphs, which are associated with a Dirac structure that generalizes the Poisson structure of the classical Hamiltonian approach \cite{maschke1993port,van2002hamiltonian}. 
\noindent Since finite-dimensional PHS was introduced in \cite{maschke1993port} and later extended to infinite dimensions in \cite{van2002hamiltonian}, the usual way to find the port-Hamiltonian (PH) representation of existing infinite-dimensional models is through the adequate definition of the energy variables, in such a way that it allows writing the dynamics of the system with respect to a skew-adjoint differential operator and the variational derivatives of the Hamiltonian functional. Despite the fact that this procedure requires at least in some cases intuition to choose the correct set of state variables in such a way that the skew-adjoint differential operator arises,  this procedure has been effective for rewriting classical elasticity models within the PH framework, for example the vibrating string model \cite{talasila2002wave}, Euler-Bernoulli beam \cite{nishida2004higher},  Timoshenko beam \cite{macchelli2004modeling}, classical plate models \cite{macchelli2005port,brugnoli2019port,brugnoli2019port2}, among others. In  \cite{mattioni2021modelling} it is shown that the E-L equations from which dynamic models of flexible mechanisms arise can be obtained by applying the principle of least action to a system with a properly defined Lagrangian functional. Then, through a suitable change of variables it can be rewritten as an infinite-dimensional PHS. One of the works that treats linear elastodynamics in a more general way is \cite{brugnoli2020port}, where it is shown that defining energy variables as vector or tensor fields leads to PH models associated with coordinate-free skew-adjoint differential operators, such as those written in terms of gradient and divergence operators. It should be noted that the starting point of all the previously cited works is to know some other model representation, e.g. the Lagrangian, but one must not lose sight of the fact that these models respond to a series of kinematic assumptions and constitutive laws, and that they may arise from E-L equations applying variational principles such as the d'Alembert's principle, or the least action from Hamilton's principle. In this line, \cite{nishida2005formal} shows a one-to-one correspondence between E-L equations and field port-Lagrangian systems on variational complexes of jet bundles. When the Lagrangian functional is known, a systematic procedure for the transformation to a port-Lagrangian model is presented. Finally, other works such as \cite{yoshimura2006dirac,nishida2010variational,nishida2012implicit,schoberl2014jet}   explore the relationships between field equations from variational structures and PH representations, the connection between Poisson and Dirac structures, and also propose adapted variational methods such as the Hamilton–Pontryagin principle, that internally includes the Legendre transform and allows directly obtaining the port-Lagrangian representations.\\[-3mm]

\noindent This work proposes a systematic modeling methodology to directly obtain the PH representation of flexible mechanical systems characterized by the Stokes-Dirac structure. We restrict ourselves to systems derived by linear elasticity whose kinematic assumptions fit with a certain class of displacement field. The main contributions of this work are twofold: a characterization of the integration by parts theorem for a class of higher order multidimensional differential operators, and the modeling methodology itself. The main differences with respect to other works are: Firstly, the definition of the energy and co-energy variables are explicitly provided. Secondly, these variables allows us to construct a priori the structure of the skew-adjoint differential operator. Thirdly, given that the methodology is based on kinematic assumptions, it only can be employed to create models from scratch, without need of previous models. Lastly, the boundary variables are explicitly defined with respect to the structure of the differential operator and the co-energy variables. This paper is structured as follows. Section 2 briefly presents the framework of infinite-dimensional PHS and a motivation example for the modeling of flexible structures. Section 3 presents the results that support the proposed methodology. To illustrate the potential of the results, Section 4 presents a “non-classical” plate model based on the Reddy's third-order shear deformation theory, which to the best of our knowledge, has never been formulated as an infinite-dimensional PHS. Finally, in Section 5 the conclusions and perspectives for future work are given.


\section{Background}

\noindent This section presents the theory that supports the rest of the article. Subsection \ref{sec:intro_lin_elast} begins by presenting the Hamilton's principle for a continuum elastic body. Then,  Subsection \ref{ssec:PHS} defines infinite-dimensional PHS and shows how they are associated with Stokes-Dirac structures. Lastly, in subsection \ref{ssec:motivation_example} a motivation example is used to define notations and highlight the main difficulties in deriving an appropriate PH model for a flexible structure.

\subsection{Hamilton's principle for a continuum elastic body} \label{sec:intro_lin_elast}

\noindent The branch of physics that studies the relationship between external forces, internal stresses, and deformation of an elastic body is known as elasticity. For the mathematical description of elastic problems, particularly those based on linear elasticity, it is necessary to introduce the concepts of the displacement field, stress and strain tensors. For a more in-depth review of elasticity and all related concepts, the reader can refer to \cite{reddy2013introduction,reddy2017energy}. 
Let $\Omega_T \subset \R{3}$ be the total volume of an elastic body in space, the displacement field $\textbf{u}=\textbf{u}(\CX,t) \in \R{3}$ assigns to each point $\CX_p \in \Omega_T$ of the body a displacement vector u$(\CX_p,t)$, that specifies its current position in the deformed configuration regarding an initial undeformed configuration (see Figure \ref{fig:u_reference}).

\begin{figure}[h]
\begin{center}
	\tikzset{every picture/.style={line width=0.75pt}} 

\scalebox{0.8}{
\begin{tikzpicture}[x=0.75pt,y=0.75pt,yscale=-0.8,xscale=0.95]


\draw    (182,220) -- (182,185) ;
\draw [shift={(182,183)}, rotate = 90] [fill={rgb, 255:red, 0; green, 0; blue, 0 }  ][line width=0.08]  [draw opacity=0] (12,-3) -- (0,0) -- (12,3) -- cycle    ;
\draw    (182,220) -- (168.32,234.3) -- (161.38,241.55) ;
\draw [shift={(160,243)}, rotate = 313.73] [fill={rgb, 255:red, 0; green, 0; blue, 0 }  ][line width=0.08]  [draw opacity=0] (12,-3) -- (0,0) -- (12,3) -- cycle    ;
\draw    (182,220) -- (216,220) ;
\draw [shift={(218,220)}, rotate = 180] [fill={rgb, 255:red, 0; green, 0; blue, 0 }  ][line width=0.08]  [draw opacity=0] (12,-3) -- (0,0) -- (12,3) -- cycle    ;
\draw   (217.88,197.06) .. controls (211.92,180.68) and (224.98,157.69) .. (247.05,145.7) .. controls (269.11,133.72) and (291.83,137.28) .. (297.78,153.66) .. controls (303.74,170.03) and (290.68,193.03) .. (268.61,205.01) .. controls (246.55,217) and (223.83,213.44) .. (217.88,197.06) -- cycle ;
\draw   (417.77,141.06) .. controls (436.86,142.33) and (463.67,160.28) .. (477.65,181.14) .. controls (491.63,202) and (487.48,217.87) .. (468.39,216.6) .. controls (449.31,215.32) and (422.5,197.38) .. (408.52,176.52) .. controls (394.54,155.66) and (398.68,139.78) .. (417.77,141.06) -- cycle ;
\draw  [fill={rgb, 255:red, 7; green, 0; blue, 0 }  ,fill opacity=1 ] (287.97,170.97) .. controls (287.95,169.86) and (287.04,168.98) .. (285.93,169) .. controls (284.83,169.02) and (283.95,169.93) .. (283.97,171.03) .. controls (283.99,172.14) and (284.9,173.02) .. (286,173) .. controls (287.1,172.98) and (287.98,172.07) .. (287.97,170.97) -- cycle ;
\draw  [fill={rgb, 255:red, 7; green, 0; blue, 0 }  ,fill opacity=1 ] (462.97,191.97) .. controls (462.95,190.86) and (462.04,189.98) .. (460.93,190) .. controls (459.83,190.02) and (458.95,190.93) .. (458.97,192.03) .. controls (458.99,193.14) and (459.9,194.02) .. (461,194) .. controls (462.1,193.98) and (462.98,193.07) .. (462.97,191.97) -- cycle ;
\draw [color={rgb, 255:red, 209; green, 34; blue, 9 }  ,draw opacity=1 ]   (287.97,170.97) -- (456.98,191.79) ;
\draw [shift={(458.97,192.03)}, rotate = 187.02] [fill={rgb, 255:red, 209; green, 34; blue, 9 }  ,fill opacity=1 ][line width=0.08]  [draw opacity=0] (12,-3) -- (0,0) -- (12,3) -- cycle    ;

\draw (390,112) node [anchor=north west][inner sep=0.75pt]  [font=\small]  {$\text{configuration}$};
\draw (225,111) node [anchor=north west][inner sep=0.75pt]  [font=\small]  {$\text{configuration}$};
\draw (284,208) node [anchor=north west][inner sep=0.75pt]  [font=\small]  {$\text{(Displacement\ vector)}$};
\draw (398,96) node [anchor=north west][inner sep=0.75pt]  [font=\small]  {$\ \!\!\text{Deformed}$};
\draw (222,95) node [anchor=north west][inner sep=0.75pt]  [font=\small]  {$\ \text{Undeformed}$};
\draw (388,69) node [anchor=north west][inner sep=0.75pt]  [font=\small,color={rgb, 255:red, 21; green, 95; blue, 194 }  ,opacity=1 ]  {$\mathnormal{Time:} \ t >t_{0}$};
\draw (233,69) node [anchor=north west][inner sep=0.75pt]  [font=\small,color={rgb, 255:red, 21; green, 95; blue, 194 }  ,opacity=1 ]  {$\mathnormal{Time:} \ t_{0}$};
\draw (322,183) node [anchor=north west][inner sep=0.75pt]    {$\mathnormal{u(}\mathtt{X_{p}}\mathnormal{,t)}$};
\draw (450,166) node [anchor=north west][inner sep=0.75pt]    {$\mathtt{X'_{P}}$};
\draw (260,153) node [anchor=north west][inner sep=0.75pt]    {$\;\,\mathtt{X_{P}}$};
\draw (226,187) node [anchor=north west][inner sep=0.75pt]    {$\Omega_T $};
\draw (165,157) node [anchor=north west][inner sep=0.75pt]   [align=left] {$ \;\; \displaystyle \zeta _{3}$};
\draw (141,239) node [anchor=north west][inner sep=0.75pt]   [align=left] {$\;\displaystyle \zeta _{2}$};
\draw (215,215) node [anchor=north west][inner sep=0.75pt]   [align=left] {$\;\displaystyle \zeta _{1}$};

\end{tikzpicture}
}
\end{center}
\vspace{-10mm}\caption{Displacement field and configurations.}
\label{fig:u_reference}
\end{figure}
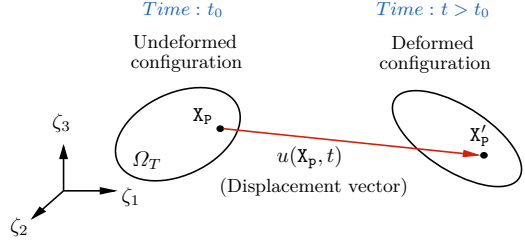

\noindent In case of small deformation ($\|\nabla \textbf{u}(\CX,t)\|<<1$), the strain tensor $\underline{\varepsilon}(\CX,t) \in \R{3\times 3}$, which is a measure of deformation excluding rigid body displacements, is defined as
\begin{equation}
\underline{\varepsilon}(\CX,t) = \Grad(\textbf{u}(\CX,t)),
\label{eq:strain_tensor_lineal}
\end{equation}
where $\Grad(\cdot) = \frac{1}{2}[(\nabla (\cdot)) + (\nabla(\cdot))^\top]$ is the symmetric part of the tensor gradient operator. The stress tensor $\underline{\sigma}(\CX,t) \in \R{3\times 3}$ is obtained by means of the generalized Hooke's law given by
$$
\underline{\sigma}(\CX,t) = \doubleunderline{C}: \underline{\varepsilon}(\CX,t) 
$$
with $\doubleunderline{C} \in \R{3\times 3\times 3\times 3}$ a constant symmetric fourth-order constitutive tensor. 
\noindent We consider the linear elastic body with volume $\Omega_T \subset \R{3}$. Its boundary is defined by $\p\Omega_T = \p\Omega_{T_u} \cup \p\Omega_{T_\sigma}$ where $\p\Omega_{T_u}$ and $\p\Omega_{T_\sigma}$ denote the portion on which displacements and stresses are specified, respectively (see Figure \ref{fig:papa_3D}). The kinetic energy $T$, the elastic potential energy $U$, and the total external work $W_E$, which are defined as \cite{reddy2013introduction}:
\begin{align}
T= & \;\mfrac{1}{2}\int_{\Omega_T} \rho(\CX) \dot{\textbf{u}}(\CX,t)\cdot \dot{\textbf{u}}(\CX,t) \,d\CX  \label{eq:T_1}   \\
U = &\; \mfrac{1}{2}\int_{\Omega_T} \underline{\sigma}(\CX,t): \underline{\varepsilon}(\CX,t) \,d\CX  \label{eq:U_1} \\
W_E =  & -\int_{\Omega_T} \textbf{F}(\CX,t) \cdot \textbf{u}(\CX,t) \,d\CX - \int_{\p \Omega_{T_\sigma}} \!\!\! \textbf{t}(\Csx_\sigma,t)\cdot \textbf{u}(\Csx_\sigma,t) \,d\Csx_\sigma   \label{eq:We_1} 
\end{align}
where $\Csx_\sigma \in \p\Omega_{T_\sigma}$ and $\Csx_u \in \p\Omega_{T_u}$ are curvilinear coordinates along the boundary,  $\p_t \textbf{u}(\CX,t) = \dot{\textbf{u}}(\CX,t) \in \R{3}$ is the velocity vector, $\rho(\CX) \in \R{}$ is the density of the body, $\textbf{F}(\CX,t)\in \R{3}$ is the sum of all external body forces, and $\textbf{t}(\Csx_\sigma,t)\in \R{3}$ is the sum of all external surface forces (also called tractions). Hamilton's principle states that the true evolution $\textbf{u}=\textbf{u}(\CX,t)$ of a system, between two specific states $\textbf{u}_1 = \textbf{u}(\CX,t_1)$ and $\textbf{u}_2 = \textbf{u}(\CX,t_2)$ at two specific times $t_1$ and $t_2$ is a stationary point (a point where the variation is zero) of the action functional $S$, that is
\begin{equation}
\var S = \int_{t_1}^{t_2}(\var U + \var W_E - \var T)\, dt = 0.
\label{eq:Hamilton_principle_1}
\end{equation}
\noindent Defining as generalized coordinates the variables that define the configuration of the system and defining as configuration space the space generated by these coordinates, Hamilton's principle states that, as the system evolves, a path is traced through the configuration space, where the real path $\textbf{u}(\CX,t)$ taken by the system has a stationary action $(\var S = 0)$ under admissible small variations $(\var \textbf{u})$ in the configuration of the system. Thus, an admissible variation $\var \textbf{u}$, also called a virtual displacement, must be consistent with the essential boundary conditions (in $\p\Omega_{T_u}$) and satisfies
\begin{align}
\var\textbf{u}(\Csx_u,t)= 0 \mbox{ on } \p\Omega_{T_u} \mbox{ for all }t \label{eq:admisible_1}\\[1mm]
\var\textbf{u}(\CX,t_1)=\var\textbf{u}(\CX,t_2)=0 \mbox{ for all }\CX . \label{eq:admisible_2}
\end{align}
So with the definitions of kinetic energy $T$, elastic potential energy $U$ and external work $W_E$, in \eqref{eq:T_1}, \eqref{eq:U_1} and \eqref{eq:We_1}, respectively, equations \eqref{eq:Hamilton_principle_1} to  \eqref{eq:admisible_2} can be used to find the dynamic equations of the system.

\begin{figure}[t]
   \begin{minipage}[b]{.32\linewidth}
    \begin{center}
		\tikzset{every picture/.style={line width=0.75pt}} 

\scalebox{0.68}{
\begin{tikzpicture}[x=0.75pt,y=0.75pt,yscale=-0.5,xscale=0.5]

\draw  [dash pattern={on 4.5pt off 4.5pt}]  (388.87,309.8) .. controls (410.8,303.1) and (426.78,280.56) .. (428.05,247.9) .. controls (429.32,215.25) and (406.4,179.98) .. (389.84,168.23) .. controls (373.28,156.47) and (359.22,154.41) .. (325.41,155.19) ;
\draw  [fill={rgb, 255:red, 184; green, 233; blue, 134 }  ,fill opacity=0.34 ] (528,135.88) .. controls (560.92,145.63) and (586.61,143) .. (601.97,235.63) .. controls (617.33,328.25) and (479.06,267.31) .. (400.04,306.31) .. controls (321.02,345.31) and (195.91,420.87) .. (207.98,328.25) .. controls (220.06,235.63) and (254.09,191.99) .. (325.41,155.19) .. controls (342.32,146.47) and (359.72,139.83) .. (376.87,134.9) .. controls (432.07,119.04) and (502.88,128.44) .. (528,135.88) -- cycle ;
\draw  [fill={rgb, 255:red, 119; green, 155; blue, 255 }  ,fill opacity=0.34 ] (528,135.88) .. controls (560.92,145.63) and (586.61,143) .. (601.97,235.63) .. controls (617.33,328.25) and (479.06,267.31) .. (400.04,306.31) .. controls (366.52,322.86) and (322.33,274.03) .. (309.21,247.77) .. controls (305.77,237.45) and (297.55,215.24) .. (300.29,194.22) .. controls (305.39,168.1) and (311.4,170.86) .. (325.41,155.19) .. controls (342.32,146.47) and (359.72,139.83) .. (376.87,134.9) .. controls (432.07,119.04) and (502.88,128.44) .. (528,135.88) -- cycle ;
\draw [color={rgb, 255:red, 255; green, 0; blue, 0 }  ,draw opacity=1 ]   (301,190.25) -- (237.24,169.96) ;
\draw [shift={(235.33,169.35)}, rotate = 17.65] [fill={rgb, 255:red, 255; green, 0; blue, 0 }  ,fill opacity=1 ][line width=0.08]  [draw opacity=0] (12,-3) -- (0,0) -- (12,3) -- cycle    ;
\draw    (141,252.25) -- (141,217.25) ;
\draw [shift={(141,215.25)}, rotate = 90] [fill={rgb, 255:red, 0; green, 0; blue, 0 }  ][line width=0.08]  [draw opacity=0] (12,-3) -- (0,0) -- (12,3) -- cycle    ;
\draw    (141,252.25) -- (127.32,266.55) -- (120.38,273.8) ;
\draw [shift={(119,275.25)}, rotate = 313.73] [fill={rgb, 255:red, 0; green, 0; blue, 0 }  ][line width=0.08]  [draw opacity=0] (12,-3) -- (0,0) -- (12,3) -- cycle    ;
\draw    (141,252.25) -- (175,252.25) ;
\draw [shift={(177,252.25)}, rotate = 180] [fill={rgb, 255:red, 0; green, 0; blue, 0 }  ][line width=0.08]  [draw opacity=0] (12,-3) -- (0,0) -- (12,3) -- cycle    ;
\draw [color={rgb, 255:red, 71; green, 4; blue, 132 }  ,draw opacity=1 ]   (301,190.25) -- (301.64,147.8) ;
\draw [shift={(301.67,145.8)}, rotate = 90.86] [fill={rgb, 255:red, 71; green, 4; blue, 132 }  ,fill opacity=1 ][line width=0.08]  [draw opacity=0] (12,-3) -- (0,0) -- (12,3) -- cycle    ;
\draw [color={rgb, 255:red, 71; green, 4; blue, 132 }  ,draw opacity=1 ]   (301,190.25) -- (287.32,204.55) -- (280.38,211.8) ;
\draw [shift={(279,213.25)}, rotate = 313.73] [fill={rgb, 255:red, 71; green, 4; blue, 132 }  ,fill opacity=1 ][line width=0.08]  [draw opacity=0] (12,-3) -- (0,0) -- (12,3) -- cycle    ;
\draw [color={rgb, 255:red, 71; green, 4; blue, 132 }  ,draw opacity=1 ]   (258.67,190.75) -- (277.67,190.25) -- (301,190.25) ;
\draw [shift={(256.67,190.8)}, rotate = 358.5] [fill={rgb, 255:red, 71; green, 4; blue, 132 }  ,fill opacity=1 ][line width=0.08]  [draw opacity=0] (12,-3) -- (0,0) -- (12,3) -- cycle    ;
\draw [color={rgb, 255:red, 0; green, 0; blue, 0 }  ,draw opacity=0.3 ] [dash pattern={on 0.84pt off 2.51pt}]  (233.5,213.8) -- (233.5,169.35) ;
\draw [color={rgb, 255:red, 0; green, 0; blue, 0 }  ,draw opacity=0.3 ] [dash pattern={on 0.84pt off 2.51pt}]  (234.67,213.8) -- (279,213.25) ;
\draw [color={rgb, 255:red, 0; green, 0; blue, 0 }  ,draw opacity=0.3 ] [dash pattern={on 0.84pt off 2.51pt}]  (256.67,190.8) -- (242.98,205.1) -- (234.67,213.8) ;

\draw (218.2,321.42) node [anchor=north west][inner sep=0.75pt]  [font=\large,color={rgb, 255:red, 62; green, 108; blue, 5 }  ,opacity=1 ]  {${\displaystyle \partial \Omega _{T_{u}}}$};
\draw (470.39,260.36) node [anchor=north west][inner sep=0.75pt]    {$\Omega _{T} =\Omega $};
\draw (495.71,144.57) node [anchor=north west][inner sep=0.75pt]  [font=\large,color={rgb, 255:red, 68; green, 124; blue, 219 }  ,opacity=1 ]  {$\partial \Omega _{T_{\sigma }}$};
\draw (177,243.25) node [anchor=north west][inner sep=0.75pt]   [align=left] {$\displaystyle \zeta _{1}$};
\draw (102,275.25) node [anchor=north west][inner sep=0.75pt]   [align=left] {$\displaystyle \zeta _{2}$};
\draw (131,180.25) node [anchor=north west][inner sep=0.75pt]   [align=left] {$\displaystyle \zeta _{3}$};
\draw (223.33,140.8) node [anchor=north west][inner sep=0.75pt]  [color={rgb, 255:red, 255; green, 0; blue, 0 }  ,opacity=1 ]  {$\hat{n}$};
\draw (290.33,120.8) node [anchor=north west][inner sep=0.75pt]  [font=\small,color={rgb, 255:red, 71; green, 4; blue, 132 }  ,opacity=1 ]  {$\hat{n}_{3}$};
\draw (231,180) node [anchor=north west][inner sep=0.75pt]  [font=\small,color={rgb, 255:red, 71; green, 4; blue, 132 }  ,opacity=1 ]  {$\hat{n}_{1}$};
\draw (264.33,212.35) node [anchor=north west][inner sep=0.75pt]  [font=\small,color={rgb, 255:red, 71; green, 4; blue, 132 }  ,opacity=1 ]  {$\hat{n}_{2}$};

\end{tikzpicture}
}
	\end{center}
	\vspace{-14mm}\subcaption{3D domain}
	 \label{fig:papa_3D}
  \end{minipage}%
   \begin{minipage}[b]{.32\linewidth}
         \begin{center}
		\tikzset{every picture/.style={line width=0.75pt}} 

\scalebox{0.68}{
\begin{tikzpicture}[x=0.75pt,y=0.75pt,yscale=-0.75,xscale=0.75]

\draw  [fill={rgb, 255:red, 155; green, 155; blue, 155 }  ,fill opacity=0.3 ] (433.25,184.5) .. controls (402.25,198.5) and (394.8,207.16) .. (321.25,190.5) .. controls (279.03,180.94) and (222.88,202.79) .. (192.25,195.5) .. controls (145.25,177.5) and (201.25,106.5) .. (241.25,102.5) .. controls (245.22,101.83) and (249.1,101.21) .. (252.89,100.62) .. controls (345.53,86.39) and (385.55,97.41) .. (425.25,117.5) .. controls (471.2,140.75) and (452.25,171.5) .. (433.25,184.5) -- cycle ;
\draw [color={rgb, 255:red, 255; green, 0; blue, 0 }  ,draw opacity=1 ]   (433.25,184.5) -- (457.15,211.89) ;
\draw [shift={(458.47,213.4)}, rotate = 228.89] [fill={rgb, 255:red, 255; green, 0; blue, 0 }  ,fill opacity=1 ][line width=0.08]  [draw opacity=0] (12,-3) -- (0,0) -- (12,3) -- cycle    ;
\draw [color={rgb, 255:red, 71; green, 4; blue, 132 }  ,draw opacity=1 ]   (433.25,184.5) -- (408.82,210.93) ;
\draw [shift={(407.47,212.4)}, rotate = 312.74] [fill={rgb, 255:red, 71; green, 4; blue, 132 }  ,fill opacity=1 ][line width=0.08]  [draw opacity=0] (12,-3) -- (0,0) -- (12,3) -- cycle    ;
\draw [color={rgb, 255:red, 71; green, 4; blue, 132 }  ,draw opacity=1 ]   (482.25,185.46) -- (433.25,184.5) ;
\draw [shift={(484.25,185.5)}, rotate = 181.12] [fill={rgb, 255:red, 71; green, 4; blue, 132 }  ,fill opacity=1 ][line width=0.08]  [draw opacity=0] (12,-3) -- (0,0) -- (12,3) -- cycle    ;
\draw [color={rgb, 255:red, 215; green, 212; blue, 212 }  ,draw opacity=1 ]   (230,132.25) -- (230.4,79.25) ;
\draw [shift={(230.42,77.25)}, rotate = 90.43] [fill={rgb, 255:red, 215; green, 212; blue, 212 }  ,fill opacity=1 ][line width=0.08]  [draw opacity=0] (12,-3) -- (0,0) -- (12,3) -- cycle    ;
\draw    (230,132.25) -- (159.65,204.07) ;
\draw [shift={(158.25,205.5)}, rotate = 314.41] [fill={rgb, 255:red, 0; green, 0; blue, 0 }  ][line width=0.08]  [draw opacity=0] (12,-3) -- (0,0) -- (12,3) -- cycle    ;
\draw    (230,132.25) -- (471.25,133.49) ;
\draw [shift={(473.25,133.5)}, rotate = 180.29] [fill={rgb, 255:red, 0; green, 0; blue, 0 }  ][line width=0.08]  [draw opacity=0] (12,-3) -- (0,0) -- (12,3) -- cycle    ;
\draw [color={rgb, 255:red, 184; green, 233; blue, 134 }  ,draw opacity=0.34 ][line width=3.75]    (173.42,176.25) .. controls (173.51,168.19) and (178.18,150.38) .. (187.42,137.25) .. controls (196.65,124.12) and (205.37,121.72) .. (211.42,115.25) .. controls (217.46,108.78) and (222.73,106.44) .. (241.25,102.5) .. controls (259.77,98.56) and (315.08,92.8) .. (333.42,95.25) ;
\draw [color={rgb, 255:red, 119; green, 155; blue, 255 }  ,draw opacity=0.34 ][line width=3.75]    (333.42,95.25) .. controls (345.42,92.08) and (449.42,107.08) .. (452.42,149.08) .. controls (455.42,191.08) and (392.42,200.08) .. (382.42,200.08) .. controls (372.42,200.08) and (304.42,187.08) .. (295.42,187.08) .. controls (286.42,187.08) and (208.42,199.08) .. (200.42,198.08) .. controls (192.42,197.08) and (176.42,188.08) .. (173.42,176.25) ;
\draw  [color={rgb, 255:red, 0; green, 0; blue, 0 }  ,draw opacity=0.25 ][fill={rgb, 255:red, 155; green, 155; blue, 155 }  ,fill opacity=0 ] (434.25,173.5) .. controls (403.25,187.5) and (395.8,196.16) .. (322.25,179.5) .. controls (280.03,169.94) and (223.88,191.79) .. (193.25,184.5) .. controls (146.25,166.5) and (202.25,95.5) .. (242.25,91.5) .. controls (246.22,90.83) and (250.1,90.21) .. (253.89,89.62) .. controls (346.53,75.39) and (386.55,86.41) .. (426.25,106.5) .. controls (472.2,129.75) and (453.25,160.5) .. (434.25,173.5) -- cycle ;
\draw [color={rgb, 255:red, 0; green, 0; blue, 0 }  ,draw opacity=0.25 ]   (173.42,176.25) .. controls (173.01,217.18) and (212.01,208.18) .. (256.01,202.18) .. controls (300.01,196.18) and (316.01,198.18) .. (353.01,206.18) .. controls (390.01,214.18) and (415.01,205.18) .. (431.84,194.6) .. controls (448.67,184.02) and (451.01,173.18) .. (452.28,164.05) ;
\draw [color={rgb, 255:red, 0; green, 0; blue, 0 }  ,draw opacity=0.25 ] [dash pattern={on 0.84pt off 2.51pt}]  (173.42,176.25) .. controls (174.01,162.85) and (194.01,125.85) .. (233.01,113.85) .. controls (272.01,101.85) and (323.01,103.85) .. (357.01,106.85) .. controls (391.01,109.85) and (400.01,115.85) .. (423.84,127.6) .. controls (447.67,139.35) and (451.01,146.85) .. (452.28,164.05) ;
\draw    (203.03,171.6) -- (203,184.5) ;
\draw [shift={(203,184.5)}, rotate = 270.15] [color={rgb, 255:red, 0; green, 0; blue, 0 }  ][line width=0.75]    (0,3.35) -- (0,-3.35)(6.56,-1.97) .. controls (4.17,-0.84) and (1.99,-0.18) .. (0,0) .. controls (1.99,0.18) and (4.17,0.84) .. (6.56,1.97)   ;
\draw    (203.03,221.6) -- (203.03,207.6) ;
\draw [shift={(203.03,207.6)}, rotate = 90] [color={rgb, 255:red, 0; green, 0; blue, 0 }  ][line width=0.75]    (0,3.35) -- (0,-3.35)(6.56,-1.97) .. controls (4.17,-0.84) and (1.99,-0.18) .. (0,0) .. controls (1.99,0.18) and (4.17,0.84) .. (6.56,1.97)   ;
\draw [color={rgb, 255:red, 0; green, 0; blue, 0 }  ,draw opacity=0.5 ]   (203,184.5) -- (203.03,207.6) ;
\draw [color={rgb, 255:red, 4; green, 3; blue, 4 }  ,draw opacity=0.5 ] [dash pattern={on 0.84pt off 2.51pt}]  (484.25,185.5) -- (458.47,213.4) ;
\draw [color={rgb, 255:red, 0; green, 0; blue, 0 }  ,draw opacity=0.5 ] [dash pattern={on 0.84pt off 2.51pt}]  (458.47,213.4) -- (407.47,212.4) ;

\draw (172.95,86.8) node [anchor=north west][inner sep=0.75pt]  [font=\large,color={rgb, 255:red, 62; green, 108; blue, 5 }  ,opacity=1 ,rotate=-359]  {${\displaystyle \partial \Omega _{u}}$};
\draw (369.39,161.36) node [anchor=north west][inner sep=0.75pt]  [color={rgb, 255:red, 0; green, 0; blue, 0 }  ,opacity=0.75 ]  {$\Omega $};
\draw (395.71,81.57) node [anchor=north west][inner sep=0.75pt]  [font=\large,color={rgb, 255:red, 68; green, 124; blue, 219 }  ,opacity=1 ]  {$\partial \Omega _{\sigma }$};
\draw (473,121.25) node [anchor=north west][inner sep=0.75pt]   [align=left] {$\displaystyle \zeta _{1}$};
\draw (142,200.25) node [anchor=north west][inner sep=0.75pt]   [align=left] {$\displaystyle \zeta _{2}$};
\draw (219,53.25) node [anchor=north west][inner sep=0.75pt]  [color={rgb, 255:red, 0; green, 0; blue, 0 }  ,opacity=0.25 ] [align=left] {$\displaystyle \zeta _{3}$};
\draw (458.33,208.8) node [anchor=north west][inner sep=0.75pt]  [color={rgb, 255:red, 255; green, 0; blue, 0 }  ,opacity=1 ]  {$\hat{n}$};
\draw (486.33,175.35) node [anchor=north west][inner sep=0.75pt]  [font=\small,color={rgb, 255:red, 71; green, 4; blue, 132 }  ,opacity=1 ]  {$\hat{n}_{1}$};
\draw (394.33,210.35) node [anchor=north west][inner sep=0.75pt]  [font=\small,color={rgb, 255:red, 71; green, 4; blue, 132 }  ,opacity=1 ]  {$\hat{n}_{2}$};
\draw (174,175) node [anchor=north west][inner sep=0.75pt]  [font=\footnotesize,color={rgb, 255:red, 0; green, 0; blue, 0 }  ,opacity=1 ,rotate=-26] [align=left] {$\times$};
\draw (439,126) node [anchor=north west][inner sep=0.75pt]  [font=\footnotesize,color={rgb, 255:red, 0; green, 0; blue, 0 }  ,opacity=1 ,rotate=-5] [align=left] {$\times$};
\draw (204,162.5) node [anchor=north west][inner sep=0.75pt]    {$h$};
\draw (214.39,212.36) node [anchor=north west][inner sep=0.75pt]  [color={rgb, 255:red, 0; green, 0; blue, 0 }  ,opacity=0.75 ]  {$\Omega _{T} =\Omega \times $};
\draw (290.39,208.36) node [anchor=north west][inner sep=0.75pt]  [font=\scriptsize,color={rgb, 255:red, 0; green, 0; blue, 0 }  ,opacity=0.75 ]  {$\left( -\frac{h}{2} ,\frac{h}{2}\right) \ $};

\end{tikzpicture}
}
	\end{center}
	\vspace{-10mm}\subcaption{2D domain}
	\label{fig:papa_2D}
  \end{minipage}%
     \begin{minipage}[b]{.32\linewidth}
         \begin{center}
		\tikzset{every picture/.style={line width=0.75pt}} 

\scalebox{0.68}{
\begin{tikzpicture}[x=0.75pt,y=0.75pt,yscale=-0.8,xscale=0.8]

\draw [color={rgb, 255:red, 215; green, 212; blue, 212 }  ,draw opacity=1 ]   (230,132.25) -- (230.4,79.25) ;
\draw [shift={(230.42,77.25)}, rotate = 90.43] [fill={rgb, 255:red, 215; green, 212; blue, 212 }  ,fill opacity=1 ][line width=0.08]  [draw opacity=0] (12,-3) -- (0,0) -- (12,3) -- cycle    ;
\draw  [color={rgb, 255:red, 0; green, 0; blue, 0 }  ,draw opacity=0.15 ] (428.73,225.24) .. controls (417.92,222.94) and (412.42,205.75) .. (416.44,186.85) .. controls (420.45,167.94) and (432.47,154.47) .. (443.27,156.76) .. controls (454.08,159.06) and (459.58,176.25) .. (455.56,195.15) .. controls (451.55,214.06) and (439.53,227.53) .. (428.73,225.24) -- cycle ;
\draw [color={rgb, 255:red, 0; green, 0; blue, 0 }  ,draw opacity=0.15 ]   (292.6,113.27) -- (443.27,156.76) ;
\draw [color={rgb, 255:red, 0; green, 0; blue, 0 }  ,draw opacity=0.15 ]   (278.06,181.74) -- (428.73,225.24) ;
\draw [color={rgb, 255:red, 0; green, 0; blue, 0 }  ,draw opacity=0.15 ]   (292.6,113.27) .. controls (265.6,110.27) and (253.6,175.27) .. (278.06,181.74) ;
\draw [color={rgb, 255:red, 0; green, 0; blue, 0 }  ,draw opacity=0.15 ] [dash pattern={on 0.84pt off 2.51pt}]  (292.6,113.27) .. controls (316.6,118.27) and (306.6,184.27) .. (278.06,181.74) ;
\draw  [draw opacity=0][fill={rgb, 255:red, 184; green, 233; blue, 134 }  ,fill opacity=0.34 ] (275,148.13) .. controls (275,141.98) and (279.84,137) .. (285.8,137) .. controls (291.76,137) and (296.6,141.98) .. (296.6,148.13) .. controls (296.6,154.28) and (291.76,159.27) .. (285.8,159.27) .. controls (279.84,159.27) and (275,154.28) .. (275,148.13) -- cycle ;
\draw  [draw opacity=0][fill={rgb, 255:red, 119; green, 155; blue, 255 }  ,fill opacity=0.34 ] (426.2,191) .. controls (426.2,184.85) and (431.04,179.87) .. (437,179.87) .. controls (442.96,179.87) and (447.8,184.85) .. (447.8,191) .. controls (447.8,197.15) and (442.96,202.13) .. (437,202.13) .. controls (431.04,202.13) and (426.2,197.15) .. (426.2,191) -- cycle ;
\draw [color={rgb, 255:red, 215; green, 212; blue, 212 }  ,draw opacity=1 ]   (230,132.25) -- (189.87,172.84) ;
\draw [shift={(188.47,174.27)}, rotate = 314.67] [fill={rgb, 255:red, 215; green, 212; blue, 212 }  ,fill opacity=1 ][line width=0.08]  [draw opacity=0] (12,-3) -- (0,0) -- (12,3) -- cycle    ;
\draw    (230,132.25) -- (285.8,148.13) ;
\draw [color={rgb, 255:red, 155; green, 155; blue, 155 }  ,draw opacity=1 ]   (285.8,148.13) -- (436,191) ;
\draw    (436,191) -- (489.88,206.34) ;
\draw [shift={(491.8,206.88)}, rotate = 195.89] [fill={rgb, 255:red, 0; green, 0; blue, 0 }  ][line width=0.08]  [draw opacity=0] (12,-3) -- (0,0) -- (12,3) -- cycle    ;
\draw [color={rgb, 255:red, 184; green, 233; blue, 134 }  ,draw opacity=0.34 ][line width=1.5]    (285.8,137) .. controls (288.4,125.8) and (301.6,114.6) .. (311.2,112.2) ;
\draw [color={rgb, 255:red, 119; green, 155; blue, 255 }  ,draw opacity=0.34 ][line width=1.5]    (437,179.87) .. controls (437.87,170.7) and (424.67,147.1) .. (416.27,142.3) ;
\draw [color={rgb, 255:red, 255; green, 0; blue, 0 }  ,draw opacity=1 ]   (437,191) -- (461.98,198.38) ;
\draw [shift={(463.9,198.94)}, rotate = 196.45] [fill={rgb, 255:red, 255; green, 0; blue, 0 }  ,fill opacity=1 ][line width=0.08]  [draw opacity=0] (12,-3) -- (0,0) -- (12,3) -- cycle    ;
\draw [color={rgb, 255:red, 255; green, 0; blue, 0 }  ,draw opacity=1 ]   (285.8,148.13) -- (259.82,140.74) ;
\draw [shift={(257.9,140.19)}, rotate = 15.89] [fill={rgb, 255:red, 255; green, 0; blue, 0 }  ,fill opacity=1 ][line width=0.08]  [draw opacity=0] (12,-3) -- (0,0) -- (12,3) -- cycle    ;
\draw  [color={rgb, 255:red, 0; green, 0; blue, 0 }  ,draw opacity=0.15 ] (441.5,169.35) .. controls (441.5,166.78) and (443.58,164.7) .. (446.15,164.7) .. controls (448.72,164.7) and (450.8,166.78) .. (450.8,169.35) .. controls (450.8,171.92) and (448.72,174) .. (446.15,174) .. controls (443.58,174) and (441.5,171.92) .. (441.5,169.35) -- cycle ;
\draw  [color={rgb, 255:red, 0; green, 0; blue, 0 }  ,draw opacity=0.15 ] (445.35,169.35) .. controls (445.35,168.91) and (445.71,168.55) .. (446.15,168.55) .. controls (446.59,168.55) and (446.95,168.91) .. (446.95,169.35) .. controls (446.95,169.79) and (446.59,170.15) .. (446.15,170.15) .. controls (445.71,170.15) and (445.35,169.79) .. (445.35,169.35) -- cycle ;
\draw [color={rgb, 255:red, 0; green, 0; blue, 0 }  ,draw opacity=0.15 ]   (446.95,169.35) .. controls (449.07,141.1) and (457.07,183.1) .. (460.27,143.5) ;

\draw (292.35,91.86) node [anchor=north west][inner sep=0.75pt]  [font=\large,color={rgb, 255:red, 62; green, 108; blue, 5 }  ,opacity=1 ,rotate=-359]  {${\displaystyle \partial \Omega _{u}}$};
\draw (325.14,137.5) node [anchor=north west][inner sep=0.75pt]  [color={rgb, 255:red, 155; green, 155; blue, 155 }  ,opacity=1 ,rotate=-15.81]  {$\Omega =( a,b)$};
\draw (390.84,120.1) node [anchor=north west][inner sep=0.75pt]  [font=\large,color={rgb, 255:red, 68; green, 124; blue, 219 }  ,opacity=1 ]  {$\partial \Omega _{\sigma }$};
\draw (486.6,207.38) node [anchor=north west][inner sep=0.75pt]   [align=left] {$\displaystyle \zeta _{1}$};
\draw (219,56.25) node [anchor=north west][inner sep=0.75pt]  [color={rgb, 255:red, 0; green, 0; blue, 0 }  ,opacity=0.25 ] [align=left] {$\displaystyle \zeta _{3}$};
\draw (460.33,180) node [anchor=north west][inner sep=0.75pt]  [color={rgb, 255:red, 255; green, 0; blue, 0 }  ,opacity=1 ]  {$\hat{n} =\ $};
\draw (490.33,180) node [anchor=north west][inner sep=0.75pt]  [font=\small,color={rgb, 255:red, 71; green, 4; blue, 132 }  ,opacity=1 ]  {$\hat{n}_{1}$};
\draw (278,141) node [anchor=north west][inner sep=0.75pt]    {$\times $};
\draw (429,184) node [anchor=north west][inner sep=0.75pt]    {$\times $};
\draw (174,170.25) node [anchor=north west][inner sep=0.75pt]  [color={rgb, 255:red, 0; green, 0; blue, 0 }  ,opacity=0.25 ] [align=left] {$\displaystyle \zeta _{2}$};
\draw (263.7,157.25) node [anchor=north west][inner sep=0.75pt]  [font=\tiny] [align=left] {$\displaystyle \zeta _{1} =a$};
\draw (415.33,202.05) node [anchor=north west][inner sep=0.75pt]  [font=\tiny] [align=left] {$\displaystyle \zeta _{1} =b$};
\draw (231.93,142) node [anchor=north west][inner sep=0.75pt]  [color={rgb, 255:red, 255; green, 0; blue, 0 }  ,opacity=1 ]  {$=\hat{n}$};
\draw (219.33,197.8) node [anchor=north west][inner sep=0.75pt]  [color={rgb, 255:red, 0; green, 0; blue, 0 }  ,opacity=0.75 ]  {$\Omega _{T} =\Omega \times A$};
\draw (215.73,142) node [anchor=north west][inner sep=0.75pt]  [font=\small,color={rgb, 255:red, 71; green, 4; blue, 132 }  ,opacity=1 ]  {$\hat{n}_{1}$};
\draw (430,105.8) node [anchor=north west][inner sep=0.75pt]  [font=\footnotesize]  {$ \begin{array}{l}
A( \zeta _{1}) :Cross\\
section\ Area
\end{array}$};

\end{tikzpicture}
}
	\end{center}
	\vspace{-10mm}\subcaption{1D domain}
	\label{fig:papa_1D}
  \end{minipage}%
  \caption{Schemes to illustrate notation. }
  \label{fig:papas_elast}
\end{figure}
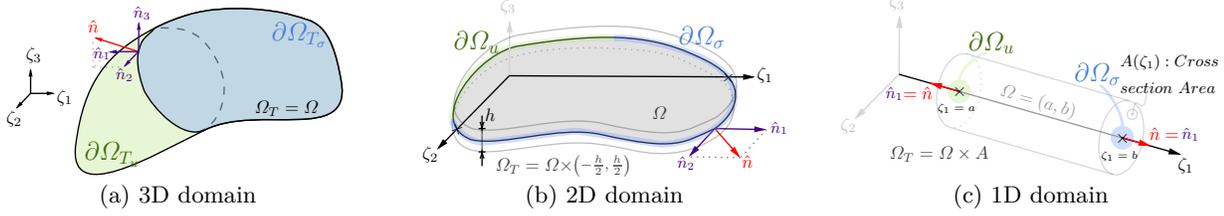

\subsection{Port-Hamiltonian systems} \label{ssec:PHS}

\noindent The theory of PHS provides a framework for the geometric description of interconnected systems. The framework's key features are the precise characterization of power flow between subsystems, the separation of the interconnecting structure from the constitutive relationships of its components, and the exploitation of this structure for analysis and control. Hence, the PH framework is particularly well adapted for the modeling, control, and simulation of complex nonlinear multiphysics systems \cite{L2gain3}. Using the so-called input and output ports, the modeling of complex systems can be approached constructively as the interconnection of elementary PH subsystems that may belong to one or more different physical domains (mechanical, electrical, hydraulic, thermal, etc.), where the interconnection is such that the total energy is preserved and the resulting system is also a PHS \cite{duindam2009modeling}. In simple terms, the conservative infinite-dimensional system can be written as a PHS from \cite{brugnoli2020port} as
\begin{equation}
\begin{array}{rl}
\p_t x = & \mathcal{J}\, \var_x H + \mathcal{G}\,u_d \\
y_d  = & \mathcal{G}^{*} \var_x H \\[2mm]
u_\p = & \mathcal{B}_\p \var_x H \\
y_\p = & \mathcal{C}_\p \var_x H 
\end{array}
\label{eq:dPHS_main}
\end{equation}
where $\p_t=\p/\p t$, $x$ contains the energy variables, $u_d$, $y_d$ are  the distributed input and output ports, respectively. $\mathcal{J}=-\mathcal{J}^*$ is a formally skew-adjoint differential operator (interconnection operator), $ \mathcal{G}$, $\mathcal{G}^{*}$ are the input map operator and its formal adjoint, respectively. $\var_x H$ is the variational derivative of the Hamiltonian functional $H$ with respect to $x$ and defines the co-energy variables. $\mathcal{B}_\p$, $\mathcal{C}_\p$ are boundary operators that provide the boundary input $u_\p$ and boundary output $y_\p$ \cite{le2005dirac,le2004semigroup}. Infinite-dimensional PHS are characterized by Hamiltonian functional and the  Stokes-Dirac structure \cite{van2002hamiltonian}.

\begin{definition} \label{def:Stokes_Dirac}
Let $ \mathscr{B}$, $\mathscr{F}$, and $\mathscr{E}$ be linear spaces, where $ \mathscr{F}$ is the flow space, its dual $\mathscr{E}$ is the effort space, and $\mathscr{B} = \mathscr{F} \times \mathscr{E}$ is called the bond space of power variables.  A Stokes-Dirac structure on $ \mathscr{B} = \mathscr{F} \times \mathscr{E} $ is a subspace $ \mathscr{D}_{s} \subset \mathscr{B} $ such that $ \mathscr{D}_{s} = \mathscr{D}_{s} ^\perp $ with respect to a bilinear form $\langle\langle \cdot,\cdot \rangle\rangle $ given by \cite{le2005dirac} 
\begin{equation}
\langle \langle (\texttt{f}_1,\texttt{f}_{\p_1},\texttt{e}_1,\texttt{e}_{\p_1}),(\texttt{f}_2,\texttt{f}_{\p_2},\texttt{e}_2,\texttt{e}_{\p_2})\rangle \rangle =
\langle \texttt{e}_1|\texttt{f}_2 \rangle_{in}^{\Omega} + \langle \texttt{e}_2|\texttt{f}_1 \rangle_{in}^{\Omega} - \langle \texttt{e}_{\p_1}|\texttt{f}_{\p_2} \rangle_{in}^{\p \Omega} - \langle \texttt{e}_{\p_2}|\texttt{f}_{\p_1} \rangle_{in}^{\p \Omega} 
\notag
\end{equation}
where $\langle \cdot|\cdot \rangle_{in}^{\Omega}$ and $\langle \cdot|\cdot \rangle_{in}^{\p \Omega}$ are inner products defined over the spatial domain $\Omega$, and its boundary $\p\Omega$, respectively. 
\noindent From the previous definition we have that for any $ (\texttt{f},\texttt{f}_{\p}, \texttt{e},\texttt{e}_{\p}) \in \mathscr{D}_{s} $ it is satisfied that
$
\langle \langle (\texttt{f},\texttt{f}_\p,\texttt{e},\texttt{e}_\p),(\texttt{f},\texttt{f}_\p,\texttt{e},\texttt{e}_\p)\rangle \rangle = 0 
$,  
which is verified in a general way using the Stokes' theorem \cite{le2005dirac}. 
\end{definition}

\noindent Now, consider the infinite-dimensional PHS in \eqref{eq:dPHS_main}, if we choose $\texttt{f}=[\texttt{f}_s,\texttt{f}_e,\texttt{f}_\p]^\top$ and $\texttt{e}=[\texttt{e}_s,\texttt{e}_e,\texttt{e}_\p]^\top$, where $\texttt{f}_s=\p_t x$, $\texttt{f}_e=u_d$, $\texttt{f}_\p=u_\p$, $\texttt{e}_s= \var_x H$, $\texttt{e}_e=-y_d$, $\texttt{e}_\p=-y_\p$, then the set
$$
\mathscr{D}_{s}=\lbrace (\texttt{f},\texttt{e})\in \mathscr{B}  \,\, |\,\, \texttt{f}_s = \mathcal{J}\texttt{e}_s + \mathcal{G}\texttt{f}_e \, , \, \texttt{e}_e = -\mathcal{G}^{*} \texttt{e}_s \,,\, \texttt{f}_\p =  \mathcal{B}_\p \texttt{e}_s \, , \, \texttt{e}_\p = -\mathcal{C}_\p \texttt{e}_s  \rbrace
$$ 
is a Stokes-Dirac structure \cite{brugnoli2020port}.  With the above definitions for $ \texttt{f} $ and $ \texttt{e} $, it is easy to show that the system \eqref{eq:dPHS_main} is conservative, that is, $\, \langle\langle (\texttt{f}, \texttt{e}),(\texttt{f}, \texttt{e}) \rangle\rangle = 0 $, and that the energy exchange with the environment is determined by the distributed and boundary ports by the expression
$$
{\p_t} H = \langle y_d |\, u_d \rangle_{in}^{\Omega} + \langle y_{\p} |\, u_{\p} \rangle_{in}^{\p \Omega}.
$$ 

\subsection{Motivation example: Timoshenko beam} \label{ssec:motivation_example}

\noindent\noindent In this part, we use the Timoshenko beam as an example to highlight the main difficulties in obtaining the PH representation for such system and to introduce several notations and key assumptions for the proposed modeling methodology. \noindent First consider the beam scheme in Figure \ref{fig:papa_1D}, with $\Omega = (a,b) \subset \R{}$ the spatial domain, $\p\Omega = \p\Omega_u \cup \,\p\Omega_\sigma$ the boundary domain, where $\p\Omega_u = \lbrace a \rbrace$ is the portion where displacements are imposed, and $\p\Omega_\sigma = \lbrace b \rbrace$ the portion where tractions are imposed. In this example assume no body forces applied to the beam ($ \textbf{F}(\CX,t)= 0$). Then, based on the first-order shear deformation theory, that is, plane sections perpendicular to the neutral axis before deformation remain plane but not necessarily perpendicular to the neutral axis after deformation, the kinematic assumptions of the Timoshenko beam are equivalent to the displacement field $\textbf{u}(\CX,t) \in \R{3}$ given by  
\begin{equation}
\textbf{u}(\CX,t) = 
\begin{bmatrix}
- \x{3}\,\psi(\x{1},t)\\[-2mm]
0\\[-2mm]
w(\x{1},t)
\end{bmatrix} 
\label{eq:u_TIM}
\end{equation}
where $w(\x{1},t)$ is the vertical displacement of a point in the neutral axis, and $\psi(\x{1},t)$ is the total angle rotated by the cross section respecting to the vertical axis. 
\noindent The equations of motion obtained after applying Hamilton's principle are given by (see the details in \cite[Chapter 2.2.3]{reddy2006theory})  
\begin{equation}
\begin{array}{lc l}
\mbox{for all $\x{1} \in \Omega$}: & & 
\rho I(\x{1}) \mfrac{\p^2 \psi}{\p t^2} + \kappa G A(\x{1})\left(\psi-\mfrac{\p w}{\p \x{1}}\right) - \mfrac{\p}{\p \x{1}}\left( EI(\x{1})\mfrac{\p \psi}{\p \x{1}}\right) =  0  \\[2mm]
& & \rho A(\x{1})\mfrac{\partial^2 w}{\partial t^2} + \mfrac{\partial}{\partial \x{1}}\left[\kappa G A(\x{1}) \left( \psi-\mfrac{\partial w}{\partial \x{1}} \right)\right] = 0   \\[5mm]
\mbox{for all $\sx_\sigma \in \p\Omega_\sigma$:}& & 
\hat{M}(\sx_\sigma \in \p\Omega_\sigma,t) - EI(\sx_\sigma \in \p\Omega_\sigma) \mfrac{\p \psi}{\p \x{1}} = 0  \\[2mm]
& & \hat{V}(\sx_\sigma \in \p\Omega_\sigma,t) - \kappa G A(\sx_\sigma \in \p\Omega_\sigma) \left( \mfrac{\p w}{\p \x{1}}  - \psi \right)  =  0 
\end{array}
\label{eq:Timoshenko_model_main}
\end{equation}
where $\sx_\sigma \in \p\Omega_\sigma$ is a coordinate along the boundary $\p\Omega_\sigma$,  $A(\x{1})$ is the cross section area, $I(\x{1})$ the second moment of inertia of the cross section, $G,E$ the material properties,  $\kappa$ is a correction factor, and $\hat{M}$,$\hat{V}$ are the imposed boundary tractions which represent internal bending moment and internal shearing force, respectively. For the PH representation of the Timoshenko beam model, the energy variables of the system can be chosen as \cite{macchelli2004modeling}:
\begin{equation}
\begin{array}{lcl c lcl}
p_1= \mtx{\rho I(\x{1})} \mfrac{\p \psi}{\p t} & , & p_2=\mtx{\rho A(\x{1})} \mfrac{\p w}{\p t}      & , & 
\q_1=\mfrac{\p \psi}{\p \x{1}}    & , &  \q_2=\left( \mfrac{\p w}{\p \x{1}} - \psi \right)
\end{array}
\label{eq:energy_var_TIM}
\end{equation} 
where $p_1$ is the angular momentum, $p_2$ the linear momentum, $ \q_1 $ is the deformation due to bending (also called curvature) and $\q_2$ is the shear deformation. The Hamiltonian function that represents the total stored energy in the system is given by
\begin{equation}
H(p,\q) = \mfrac{1}{2}\dint_{a}^{b} \left(\mfrac{p_1^2}{\rho I(\x{1})} + \mfrac{p_2^2}{\rho A(\x{1})} + \mtx{EI(\x{1})\q_1^2} + \mtx{\kappa G A(\x{1})\q_2^2} \right) d\x{1} 
\end{equation}

where the first and second terms are the rotational and translational kinetic energies, respectively, and the third and fourth terms are the elastic potential energy due to the bending and shearing, respectively. The power variables (or also called co-energy variables) are given by 
\noindent
\begin{equation}
\begin{array}{lcl}
f_{p_1} = \mfrac{\p p_1}{\p t}= \mtx{\rho I(\x{1})}\mfrac{\p^2 \psi}{\p t^2}  
& , &   
e_{p_1} = \mfrac{\var H}{\var p_1} = \mfrac{p_1}{\rho I(\x{1})} = \mfrac{\p \psi}{\p t} \\[2mm]
f_{p_2} = \mfrac{\p p_2}{\p t}= \mtx{\rho A(\x{1})}\mfrac{\p^2 w}{\p t^2}  
& , &   
e_{p_2} = \mfrac{\var H}{\var p_2} = \mfrac{p_2}{\rho A(\x{1})} = \mfrac{\p w}{\p t} \\[2mm]
f_{\q_1} = \mfrac{\p \q_1}{\p t}= \mfrac{\p}{\p t}\left(\mfrac{\p \psi}{\p \x{1}}\right)   
& , &    
e_{\q_1} = \mfrac{\var H}{\var \q_1} = \mtx{EI(\x{1})}\q_1 = \mtx{EI(\x{1})}\mfrac{\p \psi}{\p \x{1}} \\[2mm]
f_{\q_2} = \mfrac{\p \q_2}{\p t}= \mfrac{\p}{\p t}\left(\mfrac{\p w}{\p \x{1}}-\psi\right)   
& , &    
e_{\q_2} = \mfrac{\var H}{\var \q_2} = \mtx{\kappa GA(\x{1})}\q_2 = \mtx{\kappa GA(\x{1})}\left(\mfrac{\p w}{\p \x{1}}-\psi\right)  
\end{array}
\label{eq:co-energy_var_Tim}
\end{equation}
where $f_{p_1}$ is the inertial moment, $f_{p_2}$ the inertial force, $f_{\q_1}$ the bending strain velocity, $f_{\q_2}$ shearing strain velocity, $e_{p_1}$ the angular velocity, $e_{p_2}$ the linear velocity, $e_{\q_1}$ the internal bending moment and $e_{\q_2}$ the internal shearing force. With all the above defined power variables and using the notation $\mtx{\p_{1}} = \mtx{\p /\p \x{1}}$, the PH representation of the Timoshenko beam model is given by 
\begin{equation}
\begin{array}{l}
 \begin{pmatrix}
f_{p_1} \\[0mm] f_{p_2} \\[0mm] f_{\q_1} \\[0mm] f_{\q_2}
\end{pmatrix} = \ub{ 
\begin{bmatrix}
0 & 0 & \p_{1} & 1  \\[0mm] 
0 & 0 & 0 & \p_{1}  \\[0mm]
\p_{1} & 0 & 0 & 0  \\[0mm]
-1 & \p_{1} & 0 & 0 
\end{bmatrix} 
}{\mathcal{J} = - \mathcal{J}^*} 
\begin{pmatrix}
e_{p_1} \\[0mm] e_{p_2} \\[0mm] e_{\q_1} \\[0mm] e_{\q_2}
\end{pmatrix}
\\[13mm]
u_\p = \begin{bmatrix} e_{p_1}(a) & e_{p_2}(a) & e_{\q_1}(b) & e_{\q_2}(b) \end{bmatrix}  \\[1.5mm]
y_\p = \begin{bmatrix} e_{p_1}(b) & e_{p_2}(b) & -e_{\q_1}(a) & -e_{\q_2}(a) \end{bmatrix} \\
\end{array}
\label{eq:TBT-PHS_model}
\end{equation}

\noindent Note that this model has four state variables and therefore they are four equations, where the two first are equivalent to the model presented in \eqref{eq:Timoshenko_model_main}, and the two last are compatibility equations. From the Timoshenko beam example presented above, some of the difficulties in finding the PH representations are:
\begin{itemize}
\item Starting from the displacement field in \eqref{eq:u_TIM} and knowing the constitutive relationships, the equations of motion \eqref{eq:Timoshenko_model_main} are obtained after applying Hamilton's principle, which involves a lot of algebraic work and where integration by parts is required (generally over multidimensional domains). \\[-2mm]
\item The equations of motion in \eqref{eq:Timoshenko_model_main} do not always give clues about the structure of the associated PH model or the energy variables. A wrong choice of variables can lead to a model not associated with a skew-adjoint differential operator, making this an iterative process, especially in non-trivial cases. \\[-2mm]
\item While the boundary conditions of \eqref{eq:Timoshenko_model_main} arise naturally from Hamilton's principle, the boundary conditions of \eqref{eq:TBT-PHS_model} arise from the energy balance of the system, so they are defined from the Hamiltonian, the structure of the differential operator and the co-energy variables.
\end{itemize}
\noindent These difficulties motivate us to propose a systematic methodology that allows us to overcome them.

\subsection{Overview of the proposed modeling methodology}

\noindent In order to introduce the notations and the general approach developed in Section \ref{sec:modeling}, we first apply it to the proposed example. 
Notation: Let $\CX = \lbrace \x{1},\, \x{2},\,\x{3} \rbrace $ be the set of Cartesian coordinates in $\R{3}$, $\X \subset \CX $ the subset of coordinates where the parameters are distributed, and  $\X^c \subset \CX$ the complement of $\X$ such that $\CX = \X \,\cup \, \X^c$ and $\X \,\cap\, \X^c = \lbrace \phi \rbrace $, where $\lbrace \phi \rbrace$ denotes the empty set. In general, consider that $\Omega_T = \Omega \times \Omega^c \subset \R{3}$, where $\Omega \subset \R{\ell}$ is the $\ell$-dimensional spatial domain where the parameters of the model are distributed, and $\Omega^c$ a complementary domain such that $ \Omega \times \Omega^c = \Omega_T$ is the total volume of the elastic body (see Figure \ref{fig:papas_elast}). Then, a differential of volume $d\Omega_T$ and the integral of an arbitrary separable function $g(\CX) = g_1(\X)g_2(\X^c)$ over the volume $\Omega_T$ can be expressed as 
\begin{equation}
d\Omega_T = d\x{3}d\x{2}d\x{1} = d\X^c \,d\X = d\CX 
\quad , \quad 
\dint_{\Omega_T} g(\CX)\, d\CX =   \int_{\Omega}\! g_1(\X) \! \int_{\Omega^c} \!\! g_2(\X^c)\, d\X^c \, d\X 
\end{equation}
\noindent Note that in case of three-dimensional elasticity (see Figure \ref{fig:papa_3D}), we have $\X = \CX$ and $\X^c = \lbrace \phi \rbrace$, then $\Omega_T = \Omega$ and $d\CX = d\X$, which implies $ \int_{\Omega_T} g(\CX)\, d\CX = \int_{\Omega} g(\CX)\, d\X $, then  $ \int_{\Omega^c} g(\CX)\, d\X^c =  g(\CX) $. \\[-3mm]

\noindent Considering the Timoshenko beam, the displacement field $\textbf{u}(\CX,t)$ in \eqref{eq:u_TIM} is first rewritten as
$$
\textbf{u}(\CX,t)  = \begin{bmatrix}
- \x{3}\,\psi(\x{1},t)\\[-2mm]
0\\[-2mm]
w(\x{1},t)
\end{bmatrix} = \ub{\begin{bmatrix}
-\x{3} & 0   \\[-2mm] 0 & 0 \\[-2mm] 0 & 1 
\end{bmatrix}}{\lambda_1(\X^c)} \ub{ \begin{bmatrix}
\psi(\X,t) \\[-1mm] w(\X,t) 
\end{bmatrix} }{\textbf{r}(\X,t)} 
$$
where $\textbf{r}(\X,t)$ is the vector of primary unknowns, and $\lambda_1(\X^c)$ is the mapping of $\textbf{r}(\X,t)$ that allows us to know the deformed configuration of the three-dimensional beam at any point inside the volume $\Omega_T$. From Figure \ref{fig:papa_1D}, we see that the volume of the body can be written as $\Omega_T = \Omega \times A \subset \R{3}$, with $\Omega = (a,b) \subset \R{}$ an open set that defines the spatial domain where the parameters are distributed, and $A \subset \R{2}$ is the cross section area. 
\noindent From \eqref{eq:T_1} and $\textbf{u}(\CX,t)$, $\lambda_1(\X^c)$ allows us to find the momentum variables $p(\X,t)$ defined in \eqref{eq:energy_var_TIM} as
$$
p(\X,t) = \rho(\X) \dint_{\Omega^c}\!\! \lambda_1(\X^c)^\top \lambda_1(\X^c)\, d\X^c \, \dot{\textbf{r}}(\X,t) = \rho(\X) \dint_{A}\!\! \begin{bmatrix}
\x{3}^2 & 0 \\ 0 &  1
\end{bmatrix} dA \, \dot{\textbf{r}}(\X,t) = \begin{bmatrix}
\rho(\X) I(\X) & 0 \\ 0 & \!\!\!\!\!\rho(\X) A(\X)
\end{bmatrix} \begin{bmatrix}
\dot{\psi}(\X,t) \\ \dot{w}(\X,t)
\end{bmatrix}.
$$ 
From \eqref{eq:strain_tensor_lineal}, the only non-zero components of the strain tensor are
$$
\begin{bmatrix}
\varepsilon_{11} \\  2\varepsilon_{13}
\end{bmatrix} = \begin{bmatrix}
-\x{3}\, \p_1 \psi(\X,t) \\ \p_1 w(\X,t) - \psi(\X,t)
\end{bmatrix} = 
\ub{\begin{bmatrix}
-\x{3} & 0  \\  0 & 1
\end{bmatrix}}{\lambda_2(\X^c)}
 \ub{ \begin{bmatrix}
\p_1 & 0 \\ -1 & \p_1
\end{bmatrix}}{\mathcal{F}}
 \ub{\begin{bmatrix}
\psi(\X,t) \\  w(\X,t)
\end{bmatrix}}{\textbf{r}(\X,t)}
$$
from which it can already be seen that $\mathcal{F}$ and its formal adjoint $\mathcal{F}^*$ characterize the differential operator presented in \eqref{eq:TBT-PHS_model} as $\mathcal{J}=-\mathcal{J}^* = [0 \;\; -\!\mathcal{F}^* \; ; \; \mathcal{F} \;\;\; 0]$, and  the generalized strains $\q(\X,t)$ defined in \eqref{eq:energy_var_TIM} are given by $\q(\X,t) = \mathcal{F} {\textbf{r}}(\X,t)$. Writing the constitutive matrix as $C = [E \;\; 0 \,;\, 0 \;\;  \kappa G]$, from \eqref{eq:U_1} $\lambda_2(\X^c)$ allows us to find the the co-energy variables $e_\q(\X,t)$ defined in \eqref{eq:co-energy_var_Tim} as
$$
e_\q(\X,t) = \dint_{\Omega^c}\!\! \lambda_2(\X^c)^\top \, C \, \lambda_2(\X^c)\, d\X^c \, \q(\X,t) = \dint_{A}\!\! \begin{bmatrix}
E \x{3}^2 & 0 \\ 0 &  \kappa G
\end{bmatrix} dA \, \q(\X,t) = \begin{bmatrix}
E I(\X) & 0 \\ 0 & \!\!\!\!\! \kappa G A(\X)
\end{bmatrix} \begin{bmatrix}
\q_1(\X,t) \\ \q_2(\X,t)
\end{bmatrix}.
$$

\noindent It is worth to notice that only by using the displacement field $\textbf{u}(\X,t)$ without any application of the variational method, we are able to obtain the energy variables, co-energy variables and derive the structure of the skew-adjoint differential operator associated with the PH representation. This general idea will be rigorously generalized to a broader class of multidimensional systems in Section \ref{sec:modeling}.
%


\section{Modeling of linear elastic port-Hamiltonian systems} \label{sec:modeling}

\noindent In this section the modeling methodology is presented, where the definitions of the energy and co-energy variables are explicitly presented. Moreover, we introduce all the elements related to the infinite-dimensional PH representation of linear elastic mechanical models. 

\subsection{Considered class of systems} 

\noindent The model assumptions, the definition of the class of differential operators considered and the Lemma of integration by parts for them are presented below.

\begin{assumption}
We consider systems defined by linear elastic relations in a body of density $\rho = \rho(\X) \in \R{}$ and whose kinematic assumptions are represented by a displacement field $\textbf{u}(\CX,t) \in \R{3}$ with the following structure 
\begin{equation}
\textbf{u}(\CX,t) = \lambda_1(\X^c)\,\textbf{r}(\X,t)
\label{eq:hip_cinematica}
\end{equation}
where $\lambda_1(\X^c) \in \R{3\times n}$ is a matrix with no zero columns, and $\textbf{r}(\X,t)=[r_1(\X,t) \; \cdots \; r_n(\X,t)]^\top \in \R{n}$ is defined as the vector of generalized displacements or primary unknowns.
\end{assumption}

\noindent Note that depending on the specific definition of the displacement field $\textbf{u}(\CX,t)$ some components of the strain tensor could be zero. On the other hand, to avoid working directly with higher order tensors, the Voigt-Kelvin vector notation will be used. Then, let $\varepsilon(\CX,t) \in \R{d}$ be the non-zero components of the Voigt-strain vector $\vec{\varepsilon}(\CX,t)$ and $\sigma(\CX,t ) \in \R{d}$ the correlative components, so we can write the constitutive relation between stress and strain as 
\begin{equation}
\sigma(\CX,t) = C\, \varepsilon(\CX,t)
\label{eq:def_sigma_nonzero}
\end{equation}  
where $C = C^\top > 0 \in \R{d \times d}$ is a suitable constitutive matrix according to the problem (see \ref{ann:Voigt} for more details).

\begin{assumption}
We consider that $\p\Omega =\p\Omega_{u}\cup \p\Omega_{\sigma}$ is the boundary of the spatial domain $\Omega \subset \R{\ell}$ and that it is composed of complementary sub-boundaries $ \p\Omega_{u}$ and $ \p\Omega_{\sigma}$ (see Figure \ref{fig:papas_elast}), where the essential and natural boundary conditions (also called boundary tractions) are applied, respectively. Denote the distributed external inputs as $u_d(\X,t)$, the generalized boundary tractions as ${\tau}_\p(\mathtt{s}_{\sigma},t)$ with $\mathtt{s}_{\sigma} \in \p\Omega_{\sigma}$. Then, assume that the total external work $W_E$ defined in \eqref{eq:We_1} can be written equivalently as
\begin{equation}
W_E = - \left( \dint_{\Omega} B_d(u_d(\X,t))\cdot \textbf{r}(\X,t)\, d\X   + \dint_{\p\Omega_{\sigma}} \!\! {\tau}_\p(\mathtt{s}_{\sigma},t)\cdot  \mathcal{B}( \textbf{r}(\mathtt{s}_{\sigma},t)) \, d\mathtt{s}_{\sigma} \right)
\label{eq:def_WE_2}
\end{equation}
where $B_d(u_d(\X,t)) \in \mathbb{R}^n$ is defined as the generalized loads with $B_d(\cdot)$ a mapping operator, and $\mathcal{B}(\cdot)$ is a linear differential operator as defined in \eqref{eq:B_operator_theo_1}.
\end{assumption}

\begin{definition} \label{def:operadores_ND}
Let $\CX = \lbrace \x{1},\dots,\x{\ell} \rbrace$ be a set of pair-wise perpendicular coordinate axes and also the coordinates of an arbitrary point of an open set $\Omega \subset \R{\ell}$, $v(\CX) \in \R{m} $ and $w(\CX) \in \R{n} $ two vector functions. Consider a linear differential operator $ \mathcal{F}$ and its formal adjoint $\F^*$ such that, $\F \,w(\CX)=P_0\, w(\CX) + \sum_{k=1}^\ell \F_k \, w(\CX)$ and $\F^* \, v(\CX) = P_0^\top \, v(\CX) + \sum_{k=1}^ \ell \F_k^*\,  v(\CX)$, where $\F_k$ and its formal adjoint $\F_k^*$ are given by \cite{le2005dirac} 
\begin{align}
\F_k \, w(\CX) = &  \sum_{i=1}^N P_k(i)\, \p^i_k \, w(\CX) 
\label{eq:op_F} 
\\
\F_k^*\,v(\CX) = &  \sum_{i=1}^N (-1)^i P_k (i)^\top \p^i_k\, v(\CX)
\label{eq:op_F_N_1}
\end{align}
with  $\p_k^i = \p^i/\p \x{k}^i$, $P_0,\,P_k(i) \in \R{m \times n}$ and $N$ the order of the highest derivative with respect to any $\x{k}$.
\end{definition}

\begin{lemma} \label{theo:integration_by_parts_operator}
Consider that Definition \ref{def:operadores_ND} holds and define the open set $\Omega \subset \R{\ell}$ as an $\ell$-dimensional domain, its boundary $\p \Omega$ and $\bar{\Omega} = \Omega \cup \p\Omega$ the closure, such that $\X \in \Omega$ and $\sx \in \p\Omega$, where $\sx$ is the coordinates of an arbitrary point on $\p\Omega$. Then for any pair of functions $v(\X) \in \mathbb{R}^m$ and $w(\X) \in \mathbb{R}^n$ defined in $\bar{\Omega}$, we have that \vspace*{-2mm}
\begin{equation}
%
\int_\Omega \left(  v(\X)^\top \F\, w(\X) -  w(\X)^\top \F^*\,v(\X) \right) d\X =  \sum_{k=1}^\ell \sum_{i=1}^N \sum_{j=1}^i (-1)^{j-1} \!\! \int_{\p \Omega}\!\!   \p_k^{i-j}w(\sx)^\top P_k(i)^\top \hat{n}_k(\sx) \, \p_k^{j-1}v(\sx)  \, d \mathtt{s}
\label{eq: Integral_operator_F1}
\end{equation} \vspace*{-3mm}
\begin{equation}
\hspace{12mm} =  \int_{\p \Omega} \! \mathcal{B}( w(\sx))^\top \! \mathcal{Q}_\p(\sx) \, \mathcal{B} ( v(\sx)) \,  d \mathtt{s}
\label{eq: Integral_operator}
\end{equation}
where $\hat{n}_k(\sx)$ is the component of the outward unit normal vector to the boundary projected on the coordinate axis $\x{k}$, $\mathcal{B}(\cdot)$ is a linear differential operator defined  as 
\begin{align}
\mathcal{B}(\cdot) = &\begin{bmatrix}
(\cdot) & \p_1(\cdot) \; \cdots \; \p_\ell(\cdot) & \p_1^2(\cdot) \; \cdots \; \p_\ell^2(\cdot) & \cdots & \p_1^{N-1}(\cdot) \; \cdots \; \p_\ell^{N-1}(\cdot) 
\end{bmatrix}^\top  
\label{eq:B_operator_theo_1}
\end{align}
$ \mathcal{Q}_\p(\sx) \in \R{n+(N-1)n\ell\times m +(N-1)m\ell}$ is a matrix given by 
\begin{equation}
\mathcal{Q}_\p(\sx) = \left[ \begin{array}{l c c c c c}
P_\p(\sx) & -W_2(\sx) & W_3(\sx) & -W_4(\sx) & \cdots & (-1)^{N-1}W_{N}(\sx) \\[1mm]
V_2(\sx) & -\Lambda_3(\sx) & \Lambda_4(\sx) & \rddots{} & \rddots{} & 0 \\[-1mm]
V_3(\sx) & -\Lambda_4(\sx) & \Lambda_5(\sx)  & \rddots{} &  & \vdots  \\
\hspace{3.5mm}\vdots & \vdots & \rddots{} & \rddots{}  &  & \vdots \\[-1mm]
V_{N-1}(\sx) & -\Lambda_N(\sx) & 0  &  & & \vdots \\[1mm]
V_{N}(\sx) & 0 & 0& \cdots & \cdots & 0
\end{array} \right]
\label{eq:Q_k_theo_1}
\end{equation}
with $P_\p(\sx) \in \R{n\times m}$, $W_i(\sx) \in \R{n\times m\ell}$, $V_i(\sx) \in \R{n\ell\times m}$, and $\Lambda_i(\sx) \in \R{n\ell\times m\ell}$ defined as
\begin{equation}
\begin{array}{lcl}
P_\p(\sx) =  \displaystyle\sum_{k=1}^\ell P_k(1)^\top \hat{n}_k(\sx) & , & 
W_i(\sx) = \begin{bmatrix} P_1(i)^\top \hat{n}_1(\sx) & \cdots & P_\ell(i)^\top \hat{n}_\ell(\sx) \end{bmatrix} \\[5mm]
V_i(\sx) = \begin{bmatrix} P_1(i)^\top \hat{n}_1(\sx) \\ \vdots \\ P_\ell(i)^\top \hat{n}_\ell(\sx) \end{bmatrix} & , & 
\Lambda_i(\sx) = \begin{bmatrix} P_1(i)^\top \hat{n}_1(\sx) & & 0\\ & \ddots &  \\ 0 & & P_\ell(i)^\top \hat{n}_\ell(\sx) \end{bmatrix}
\end{array}
\label{eq:matrices_Qp_theo_1}
\end{equation}
\end{lemma}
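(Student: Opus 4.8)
The plan is to reduce the multidimensional statement to a family of one-dimensional, direction-by-direction integrations by parts, and then to repackage the resulting boundary terms into the matrix form \eqref{eq:Q_k_theo_1}. First I would note that the zeroth-order parts cancel pointwise: since $v(\X)^\top P_0\, w(\X) = w(\X)^\top P_0^\top\, v(\X)$, the matrices $P_0$ in $\F$ and $P_0^\top$ in $\F^*$ contribute nothing to the left-hand side. Hence it suffices to treat, for each fixed axis $\x{k}$ and each order $i\in\{1,\dots,N\}$, the scalar bilinear expression $\Phi_{k,i}:=v^\top P_k(i)\,\p_k^i w - (-1)^i (\p_k^i v)^\top P_k(i)\,w$ arising from \eqref{eq:op_F}--\eqref{eq:op_F_N_1} (using that $P_k(i)$ is a constant matrix, which is precisely what makes the adjoint formula \eqref{eq:op_F_N_1} exact). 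The elementary identity I would establish is the telescoping (Lagrange/Green) formula
\[
\Phi_{k,i} = \p_k\!\left( \sum_{j=1}^{i} (-1)^{j-1}\,(\p_k^{\,j-1} v)^\top P_k(i)\, \p_k^{\,i-j} w \right),
\]
which follows by induction on $i$, or directly: differentiating the bracketed sum produces two shifted copies of it that telescope, leaving exactly the $j=1$ and $j=i$ end terms $v^\top P_k(i)\p_k^i w$ and $-(-1)^i(\p_k^i v)^\top P_k(i)w$.

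Next I would sum this identity over $k=1,\dots,\ell$ and $i=1,\dots,N$. Setting $g_k := \sum_{i=1}^N\sum_{j=1}^i (-1)^{j-1}(\p_k^{\,j-1}v)^\top P_k(i)\,\p_k^{\,i-j}w$, the left-hand side of \eqref{eq: Integral_operator_F1} becomes $\int_\Omega \sum_{k=1}^\ell \p_k g_k \, d\X = \int_\Omega \operatorname{div}(g_1,\dots,g_\ell)\, d\X$, and the divergence theorem on the regular domain $\bar\Omega = \Omega\cup\p\Omega$ turns this into $\int_{\p\Omega} \sum_{k=1}^\ell g_k\, \hat{n}_k(\sx)\, d\sx$. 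Transposing each scalar $g_k$ to match the printed ordering $\p_k^{\,i-j}w(\sx)^\top P_k(i)^\top\hat{n}_k(\sx)\,\p_k^{\,j-1}v(\sx)$ yields exactly \eqref{eq: Integral_operator_F1}. This step needs only $C^N$ regularity of $v$ and $w$ up to $\p\Omega$ and a Lipschitz (or piecewise-smooth) boundary, so that the divergence theorem applies.

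Finally, to obtain \eqref{eq: Integral_operator} I would reindex the boundary integrand by the derivative orders carried by $w$ and $v$ separately: put $a = i-j$ and $b = j-1$, so a generic summand reads $(\p_k^{\,a}w)^\top P_k(i)^\top\hat{n}_k\,\p_k^{\,b}v$ with $i = a+b+1\le N$ and sign $(-1)^{j-1}=(-1)^{b}$. Grouping the components $\p_k^{\,a}w$ into the blocks of $\mathcal{B}(w)$ defined in \eqref{eq:B_operator_theo_1} --- the order-$0$ block $w$ itself, and for each $p\ge 1$ the direction-split block $[\p_1^p(\cdot),\dots,\p_\ell^p(\cdot)]^\top$ --- and likewise for $v$, one reads off the entries of $\mathcal{Q}_\p$: the $(a{=}0,b{=}0)$ contribution is $w^\top\big(\sum_k P_k(1)^\top\hat{n}_k\big)v = w^\top P_\p v$; the $(a{=}0,b{\ge}1)$ contributions assemble the first block-row entries $(-1)^{b}W_{b+1}$; the $(a{\ge}1,b{=}0)$ contributions assemble the first block-column entries $V_{a+1}$; and the $(a{\ge}1,b{\ge}1)$ contributions, which couple only equal-direction derivatives $\p_k^{\,a}w$ with $\p_k^{\,b}v$, assemble the block-diagonal matrices $(-1)^{b}\Lambda_{a+b+1}$, with $P_\p$, $W_i$, $V_i$, $\Lambda_i$ exactly as in \eqref{eq:matrices_Qp_theo_1}. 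The constraint $a+b+1\le N$ forces every block with $a+b\ge N$ to vanish, producing the anti-triangular pattern of zeros in the lower-right of $\mathcal{Q}_\p$ in \eqref{eq:Q_k_theo_1}. Collecting these blocks is precisely the pointwise identity $\sum_k g_k\hat{n}_k = \mathcal{B}(w(\sx))^\top\mathcal{Q}_\p(\sx)\,\mathcal{B}(v(\sx))$ on $\p\Omega$, and integrating gives \eqref{eq: Integral_operator}.

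I expect the main obstacle to be the bookkeeping in this last step: keeping track simultaneously of the two independent derivative orders $a,b$ with their range $a+b\le N-1$, the sign $(-1)^{b}$, and the fact that within each derivative-order block only matching coordinate axes $\x{k}$ are paired --- which is exactly what distinguishes the rectangular "mixed" blocks $W_i$ and $V_i$ (one function undifferentiated) from the block-diagonal $\Lambda_i$ (both functions differentiated). The one-dimensional telescoping identity and the divergence-theorem passage are routine; the substantive content is the correct placement of the coefficient matrices into the cells of $\mathcal{Q}_\p$ prescribed by \eqref{eq:Q_k_theo_1}.
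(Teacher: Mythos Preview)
Your argument is correct and is precisely the detailed execution of what the paper only sketches: the paper's proof consists solely of the sentence ``Just by iteratively applying integration by parts on the left side of \eqref{eq: Integral_operator_F1}'' together with two references, whereas you carry out that iteration explicitly via the Lagrange telescoping identity, apply the divergence theorem, and then do the block bookkeeping to land on \eqref{eq:Q_k_theo_1}. There is no substantive difference in approach --- you have simply supplied the omitted details.
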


\begin{proof}
\noindent Just by iteratively applying integration by parts on the left side of \eqref{eq: Integral_operator_F1}. The procedure is based on \cite[Theorem 3.1]{le2005dirac} and   \cite[Appendix A]{warsewa2021port}. 
\end{proof}

\begin{corollary} \label{cor:Integration_theorem_N=1}
Note that for an operator $\F$ of order $N = 1$, Lemma \ref{theo:integration_by_parts_operator} leads to
\begin{equation}
\int_\Omega \left( v(\X)^\top \F\, w(\X) - w(\X)^\top \F^*\,v(\X) \right) d\X   =   \int_{\p \Omega} \! w(\sx)^\top P_\p(\sx) \, v(\sx) \,d \mathtt{s}
\label{eq: Integral_operator orden 1}
\end{equation}
\end{corollary}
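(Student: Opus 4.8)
The plan is to specialize Lemma~\ref{theo:integration_by_parts_operator} to an operator $\F$ of order $N=1$ and to check that every higher-order contribution in \eqref{eq: Integral_operator_F1}--\eqref{eq: Integral_operator} is vacuous. First I would substitute $N=1$ into the right-hand side of \eqref{eq: Integral_operator_F1}. The triple sum $\sum_{k=1}^\ell\sum_{i=1}^N\sum_{j=1}^i$ then forces $i=1$ and $j=1$, so exactly one term survives for each coordinate index $k$: the sign is $(-1)^{j-1}=1$, the derivative orders collapse to $\p_k^{\,i-j}w(\sx)=w(\sx)$ and $\p_k^{\,j-1}v(\sx)=v(\sx)$, and the coefficient is $P_k(1)^\top$. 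Summing the $\ell$ remaining boundary integrals yields $\int_{\p\Omega} w(\sx)^\top\big(\sum_{k=1}^\ell P_k(1)^\top\hat{n}_k(\sx)\big)v(\sx)\,d\sx$, and by the definition of $P_\p(\sx)$ in \eqref{eq:matrices_Qp_theo_1} this is precisely $\int_{\p\Omega} w(\sx)^\top P_\p(\sx)\,v(\sx)\,d\sx$, i.e. \eqref{eq: Integral_operator orden 1}.

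As an independent check I would derive the same identity from \eqref{eq: Integral_operator}. When $N=1$ the differential operator $\mathcal{B}(\cdot)$ in \eqref{eq:B_operator_theo_1} keeps only its zeroth-order entry, since the blocks of derivatives $\p_1^{\,i}(\cdot),\dots,\p_\ell^{\,i}(\cdot)$ with $1\le i\le N-1$ are empty; hence $\mathcal{B}(\cdot)$ acts as the identity. For the same reason the matrix $\mathcal{Q}_\p(\sx)$ in \eqref{eq:Q_k_theo_1} degenerates to its single block $P_\p(\sx)$, because none of the sub-blocks $W_i(\sx)$, $V_i(\sx)$, $\Lambda_i(\sx)$ with $i\ge 2$ occur. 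Substituting both reductions into \eqref{eq: Integral_operator} again produces $\int_{\p\Omega} w(\sx)^\top P_\p(\sx)\,v(\sx)\,d\sx$, confirming consistency with the first computation.

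I do not anticipate any real obstacle: the corollary is a pure specialization of the lemma, and the only care required is bookkeeping — ensuring that the empty index ranges in the sums and the absent off-diagonal blocks of $\mathcal{Q}_\p$ are treated correctly — so that the $N=1$ case reduces to a single integration by parts in each coordinate direction followed by summation over $k$.
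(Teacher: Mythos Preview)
Your proposal is correct and matches the paper's approach: the corollary is stated there as an immediate specialization of Lemma~\ref{theo:integration_by_parts_operator} with no separate proof, and your computation---collapsing the triple sum in \eqref{eq: Integral_operator_F1} to $i=j=1$ and identifying the result with $P_\p(\sx)$ via \eqref{eq:matrices_Qp_theo_1}---is exactly the intended reduction. The additional consistency check through \eqref{eq: Integral_operator} is a nice confirmation but not required.
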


\subsection{Modeling methodology}

\noindent In this subsection, the key steps of the methodology for the systematic modeling of systems based on linear elasticity are presented. The section begins with two propositions that allows to define the energy variables from the kinematic assumptions and the differential operator, and then, using  Hamilton's principle, it is shown that this selected results lead to an infinite-dimensional port-Hamiltonian system. Finally, based on the kinematic assumptions and constitutive laws of the system, the methodology is outlined as a stet-by-step series.

\begin{proposition} \label{prop:1}
Consider linear elasticity under the kinematic assumption \eqref{eq:hip_cinematica}, the generalized momentum $p=p(\X,t) \in \R{n}$, the mass matrix $\mathcal{M}(\X)=\mathcal{M}(\X)^\top > 0 \in \R{n \times n}$, and the total kinetic energy $T[p] \in \R{}$ are defined as
\begin{align}
p(\X,t) = &\; \mathcal{M}(\X)\,\dot{\textbf{r}}(\X,t)
\label{eq:def_p_distributed} \\
\mathcal{M}(\X) = &\; \rho(\X)\dint_{\Omega^c} \lambda_1(\X^c)^\top\lambda_1(\X^c) \, d\X^c
\label{eq:def_Mass_matrix_distributed} \\
T[p] = &\; \mfrac{1}{2}\dint_{\Omega} p(\X,t)^\top \mathcal{M}(\X)^{-1}  p(\X,t) \,d\X
\label{eq:def_T_distributed_p}
\end{align}
\end{proposition}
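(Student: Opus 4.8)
The plan is to verify that the three displayed formulas are mutually consistent with the kinetic energy \eqref{eq:T_1} and with Hamilton's principle: concretely, that $\mathcal{M}(\X)$ in \eqref{eq:def_Mass_matrix_distributed} is well defined, symmetric and positive definite (so that $\mathcal{M}(\X)^{-1}$ exists), and that $T[p]$ in \eqref{eq:def_T_distributed_p}, with $p$ as in \eqref{eq:def_p_distributed}, is nothing but the physical kinetic energy \eqref{eq:T_1} re-expressed through the momentum.

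First I would differentiate the kinematic assumption \eqref{eq:hip_cinematica} in time; since $\lambda_1$ depends only on $\X^c$, this gives $\dot{\textbf{u}}(\CX,t) = \lambda_1(\X^c)\,\dot{\textbf{r}}(\X,t)$ (interchange of $\partial_t$ with the assumed regularity of $\textbf{r}$ is harmless). Substituting into \eqref{eq:T_1}, using $\rho = \rho(\X)$ from Assumption 1 and the separability $d\CX = d\X^c\,d\X$, $\int_{\Omega_T} = \int_{\Omega}\int_{\Omega^c}$, the integrand becomes $\rho(\X)\,\dot{\textbf{r}}(\X,t)^\top\lambda_1(\X^c)^\top\lambda_1(\X^c)\,\dot{\textbf{r}}(\X,t)$. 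Because $\dot{\textbf{r}}$ and $\rho(\X)$ do not depend on $\X^c$, they factor out of the inner integral, which is then exactly $\mathcal{M}(\X)$ as defined in \eqref{eq:def_Mass_matrix_distributed}, yielding $T = \tfrac{1}{2}\int_{\Omega}\dot{\textbf{r}}(\X,t)^\top\mathcal{M}(\X)\,\dot{\textbf{r}}(\X,t)\,d\X$.

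Next I would check the properties of $\mathcal{M}(\X)$ asserted in the statement. Symmetry is immediate since $(\lambda_1^\top\lambda_1)^\top = \lambda_1^\top\lambda_1$ and integration preserves it. For positive definiteness, for any $\textbf{v}\in\R{n}\setminus\{0\}$ we have $\textbf{v}^\top\mathcal{M}(\X)\textbf{v} = \rho(\X)\int_{\Omega^c}\|\lambda_1(\X^c)\textbf{v}\|^2\,d\X^c \ge 0$, with equality only if $\lambda_1(\X^c)\textbf{v} = 0$ for almost every $\X^c$; this is ruled out by the columns of $\lambda_1$ being linearly independent as functions on $\Omega^c$ (the operative content of the "no zero columns" requirement — in the structural models of interest they are independent monomials in $\X^c$), and together with $\rho(\X)>0$ this makes $\mathcal{M}(\X)$ invertible. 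Finally, from $p = \mathcal{M}(\X)\dot{\textbf{r}}$ we get $\dot{\textbf{r}} = \mathcal{M}(\X)^{-1}p$; substituting into the last expression of the previous paragraph gives $T = \tfrac{1}{2}\int_{\Omega}p(\X,t)^\top\mathcal{M}(\X)^{-1}p(\X,t)\,d\X = T[p]$, so the kinetic energy is a functional of $p$ alone, and since $T$ contains no spatial derivatives of $\dot{\textbf{r}}$ one reads off $\delta T/\delta\dot{\textbf{r}} = \mathcal{M}(\X)\dot{\textbf{r}} = p$, confirming that \eqref{eq:def_p_distributed} is the momentum conjugate to $\textbf{r}$. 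The argument is essentially a Fubini-plus-substitution computation; the only point that requires care is the invertibility of $\mathcal{M}(\X)$, i.e. the column non-degeneracy of $\lambda_1$.
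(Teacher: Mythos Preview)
Your proof is correct and follows essentially the same route as the paper: differentiate \eqref{eq:hip_cinematica}, substitute into \eqref{eq:T_1}, use Fubini to pull out the $\X^c$-integral and identify $\mathcal{M}(\X)$, then invert $p=\mathcal{M}\dot{\textbf{r}}$ to rewrite $T$ as $T[p]$. The paper's own proof is in fact shorter --- it does not pause to justify symmetry or positive definiteness of $\mathcal{M}(\X)$, nor does it check that $p$ is the conjugate momentum via $\delta T/\delta\dot{\textbf{r}}$; you add these verifications, which is fine. Your observation that the stated ``no zero columns'' hypothesis on $\lambda_1$ is strictly weaker than the linear independence of columns (as functions on $\Omega^c$) actually needed for $\mathcal{M}(\X)>0$ is a valid and useful remark.
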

$ $\\[-6mm]
\begin{proof}
The kinematic assumption $\textbf{u}(\CX,t)$ according to \eqref{eq:hip_cinematica} implies $\dot{\textbf{u}}(\CX,t) = \lambda_1(\X^c)\,\dot{\textbf{r}}(\X,t)$. Then, by the definition of kinetic energy in \eqref{eq:T_1} we have
$$
\begin{array}{rl}
T =  \mfrac{1}{2} \dint_{\Omega_T} \rho(\X) \, \dot{\textbf{u}}(\CX,t)^\top \dot{\textbf{u}}(\CX,t) \, d\CX = \mfrac{1}{2} \dint_{\Omega} \dot{\textbf{r}}(\X,t)^\top \ub{\rho(\X)\dint_{\Omega^c} \lambda_1(\X^c)^\top\lambda_1(\X^c) \, d\X^c}{\mathcal{M}(\X)} \, \dot{\textbf{r}}(\X,t) \,d\X \\
\end{array}
$$
but from \eqref{eq:def_p_distributed} we know that $\dot{\textbf{r}}(\X,t)= \mathcal{M}(\X)^{-1}p(\X,t)$, then we can write $T[p]$ as in \eqref{eq:def_T_distributed_p}.
\end{proof}
$ $

\begin{proposition} \label{prop:2}
Consider linear elasticity under the kinematic assumption \eqref{eq:hip_cinematica}. Assume that the strain vector $\varepsilon(\CX,t) \in \R{d}$ can be written as
\begin{equation}
\varepsilon(\CX,t) = \lambda_2(\X^c)\mathcal{F}\, \textbf{r}(\X,t)
\label{eq:factorization_prop_2}
\end{equation}

where $\lambda_2(\X^c) \in \R{d \times m}$ is a matrix, and $\mathcal{F}$ is a $(m \times n)$ linear differential operator as in Definition \ref{def:operadores_ND}, both without any zero rows or columns. Then, the generalized strains $\q = \q(\X,t) \in \R{m}$, the stiffness matrix $\mathcal{K}(\X) \in \R{m\times m}$, and the elastic potential energy $U[\q] \in \R{}$ are defined as
\begin{align}
\q(\X,t) = & \; \mathcal{F}\,\textbf{r}(\X,t)
\label{eq:def_q:distributed} \\
\mathcal{K}(\X) = &\; \dint_{\Omega^c} \lambda_2(\X^c)^\top \, C \, \lambda_2(\X^c) d\X^c
\label{eq:def_Stiffness_matrix_distributed} \\
U[\q] = & \; \mfrac{1}{2} \dint_{\Omega} \q(\X,t)^\top \mathcal{K}(\X) \, \q(\X,t) \, d\X
\label{eq:def_U_distributed_q}
\end{align}
with $C = C^\top > 0 \in \R{d \times d}$ is the constitutive matrix defined as \eqref{eq:def_sigma_nonzero}. The dimension $ m\in\mathbb{N}$ must be chosen such that $\mathcal{K}(\X)=\mathcal{K}(\X)^\top>0 $.
\end{proposition}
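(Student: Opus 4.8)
The plan is to mirror the argument used for Proposition \ref{prop:1}, replacing the kinetic term by the elastic potential energy \eqref{eq:U_1}. First I would rewrite the double contraction $\underline{\sigma}(\CX,t):\underline{\varepsilon}(\CX,t)$ in terms of the Voigt--Kelvin vectors of the non-zero components: by construction of this notation (see \ref{ann:Voigt}) one has $\underline{\sigma}:\underline{\varepsilon}=\sigma(\CX,t)^\top\varepsilon(\CX,t)$, so that, using the constitutive law \eqref{eq:def_sigma_nonzero},
$$
U=\frac{1}{2}\dint_{\Omega_T}\sigma(\CX,t)^\top\varepsilon(\CX,t)\,d\CX=\frac{1}{2}\dint_{\Omega_T}\varepsilon(\CX,t)^\top C\,\varepsilon(\CX,t)\,d\CX .
$$

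Next I would insert the assumed factorization $\varepsilon(\CX,t)=\lambda_2(\X^c)\,\mathcal{F}\,\textbf{r}(\X,t)$ from \eqref{eq:factorization_prop_2}, so that the integrand becomes $\big(\mathcal{F}\,\textbf{r}(\X,t)\big)^\top\lambda_2(\X^c)^\top C\,\lambda_2(\X^c)\big(\mathcal{F}\,\textbf{r}(\X,t)\big)$. Since $\mathcal{F}$ differentiates only with respect to the distributed coordinates $\X$, the factor $\mathcal{F}\,\textbf{r}(\X,t)$ does not depend on $\X^c$; splitting $d\CX=d\X^c\,d\X$ and using the separability of the volume integral over $\Omega_T=\Omega\times\Omega^c$ recalled earlier lets me carry $\mathcal{F}\,\textbf{r}$ out of the inner integral, giving
$$
U=\frac{1}{2}\dint_{\Omega}\big(\mathcal{F}\,\textbf{r}(\X,t)\big)^\top\ub{\left(\dint_{\Omega^c}\lambda_2(\X^c)^\top C\,\lambda_2(\X^c)\,d\X^c\right)}{\mathcal{K}(\X)}\big(\mathcal{F}\,\textbf{r}(\X,t)\big)\,d\X .
$$
Recognising $\mathcal{K}(\X)$ as in \eqref{eq:def_Stiffness_matrix_distributed} and setting $\q(\X,t)=\mathcal{F}\,\textbf{r}(\X,t)$ as in \eqref{eq:def_q:distributed} then yields $U[\q]$ in the announced form \eqref{eq:def_U_distributed_q}. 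The factorization itself is, as the statement says, an assumption; in practice it follows from computing $\underline{\varepsilon}=\Grad(\lambda_1(\X^c)\textbf{r}(\X,t))$ and collecting the non-zero Voigt components, exactly as in the Timoshenko motivation example.

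Finally I would verify the qualifiers on $\mathcal{K}(\X)$. Symmetry is immediate from $C=C^\top$, since $\mathcal{K}(\X)^\top=\int_{\Omega^c}\lambda_2(\X^c)^\top C^\top\lambda_2(\X^c)\,d\X^c=\mathcal{K}(\X)$. For any $z\in\R{m}$ one has $z^\top\mathcal{K}(\X)z=\int_{\Omega^c}\big(\lambda_2(\X^c)z\big)^\top C\big(\lambda_2(\X^c)z\big)\,d\X^c\ge 0$ because $C>0$, so $\mathcal{K}(\X)$ is always at least positive semidefinite. The one genuinely delicate point --- and the reason the proposition leaves $m$ to be fixed by the modeller --- is strict positivity: $z^\top\mathcal{K}(\X)z=0$ forces $\lambda_2(\X^c)z=0$ for almost every $\X^c\in\Omega^c$, so $\mathcal{K}(\X)>0$ holds precisely when the columns of $\lambda_2(\cdot)$ are linearly independent as $L^2(\Omega^c)$ functions (in particular $\lambda_2$ must have no zero column, hence the hypothesis in the statement). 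Guaranteeing this by an appropriate choice of $m$, i.e.\ of how many independent generalized strains are retained, is a modeling decision rather than a computation; everything else is the bookkeeping carried out above, so I do not expect a real obstacle beyond the Voigt--Kelvin conversion and this rank condition.
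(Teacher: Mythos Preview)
Your proof is correct and follows essentially the same route as the paper: substitute the factorization $\varepsilon=\lambda_2(\X^c)\,\mathcal{F}\,\textbf{r}$ into the potential energy integral \eqref{eq:U_1}, split $\int_{\Omega_T}=\int_\Omega\int_{\Omega^c}$, and identify $\mathcal{K}(\X)$. You add a useful explicit check of the symmetry and semidefiniteness of $\mathcal{K}(\X)$ and a discussion of the rank condition behind the choice of $m$, which the paper leaves implicit.
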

$ $\\[-6mm]
\begin{proof}
From \eqref{eq:factorization_prop_2} we have $\varepsilon(\CX,t) =
 \lambda_2(\X^c) \q(\X,t)$, and from \eqref{eq:def_sigma_nonzero} we have $\sigma(\CX,t)= C\, \lambda_2(\X^c) \, \q(\X,t)$. Then, by the definition of the elastic potential energy in \eqref{eq:U_1} we have
$$
\begin{array}{rl}
U =  \mfrac{1}{2} \dint_{\Omega_T} \sigma(\CX,t)^\top \varepsilon(\CX,t) \, d\CX = \mfrac{1}{2} \dint_{\Omega} \q(\X,t)^\top \ub{\dint_{\Omega^c} \lambda_2(\X^c)^\top \, C \, \lambda_2(\X^c) \, d\X^c}{\mathcal{K}(\X)} \, \q(\X,t) \,d\X
\end{array}
$$ \\[-5mm]
with the above the proof ends.
\end{proof}

\begin{theorem} \label{theo:PHS_SD}
Let consider $x(\X,t) = [p(\X,t)^\top \; \q(\X,t)^\top]^\top \in \R{(n+m)}$ as the state variable with $\textbf{u}(\CX,t)$ defined in \eqref{eq:hip_cinematica} and total external work $W_E$ in \eqref{eq:def_WE_2}. From Propositions \ref{prop:1} and \ref{prop:2} the dynamics of the system defines an infinite-dimensional port-Hamiltonian system of the form
\begin{equation}
\begin{array}{rl}
\ub{\begin{bmatrix}
f_p \\ f_\q
\end{bmatrix}}{\dot{x}} = & \ub{\begin{bmatrix}
0 & -\mathcal{F}^* \\ \mathcal{F} & 0 
\end{bmatrix}}{\mathcal{J}=-\mathcal{J}^*}\ub{\begin{bmatrix}
e_p \\ e_\q
\end{bmatrix}}{\var_x H} + \ub{\begin{bmatrix}
B_d \\ 0
\end{bmatrix}}{\mathcal{G}} u_d \\[11mm]
y_d = & \mathcal{G}^* \var_x H = B_d^*(e_p) 
\end{array}
\label{eq:def_dPHS_prop3}
\end{equation}
\begin{equation}
H[p,\q] = \mfrac{1}{2} \dint_{\Omega}  p(\X,t)^\top \mathcal{M}(\X)^{-1} p(\X,t) + \q(\X,t)^\top \mathcal{K}(\X) \q(\X,t) \,d\X 
\label{eq:def_HAMILTONIANO}
\end{equation}

where $H[p,\q]=T[p]+U[\q]$ is the Hamiltonian of the system, $f_p = \dot{p} = \mathcal{M}\,\ddot{\textbf{r}}$, $f_\q = \dot{\q} = \mathcal{F}\,\dot{\textbf{r}}$, $e_p = \var_p H = \mathcal{M}^{-1}\,p = \dot{\textbf{r}}$, $e_\q = \var_\q H = \mathcal{K}\, \q = \mathcal{K}\mathcal{F}\,\textbf{r}$, and the power exchange with the environment is given by
\begin{equation}
\HS{-18} \p_t {H} = \dint_{\Omega} y_d(\X,t)^\top u_d(\X,t) \,d\X + \dint_{\p\Omega} \mathcal{B}(e_p(\mathtt{s},t))^\top  \mathcal{Q}_\p(\mathtt{s}) \, \mathcal{B}(e_\q(\mathtt{s},t))  \, d\mathtt{s}
\label{eq:def_HAMILTONIANO_dot_OP}
\end{equation} \\[-5mm]
or equivalently by \\[-3mm]
\begin{equation}
\p_t {H} = \dint_{\Omega} y_d(\X,t)^\top u_d(\X,t) \,d\X + \sum_{k=1}^\ell \sum_{i=1}^N \sum_{j=1}^i (-1)^{j-1} \!\! \int_{\p \Omega}\!\!   \p_k^{i-j}e_p(\mathtt{s},t)^\top P_k(i)^\top  \hat{n}_{k}(\mathtt{s}) \, \p_k^{j-1}e_\q(\mathtt{s},t)  \, d \mathtt{s}
\label{eq:def_HAMILTONIANO_dot_SUM}
\end{equation} 
where $\hat{n}_{k}$ is the component of the outward unit normal vector to the boundary $\p\Omega$ projected on $\x{k}$.
\end{theorem}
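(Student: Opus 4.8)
The plan is to derive the port-Hamiltonian form directly from Hamilton's principle \eqref{eq:Hamilton_principle_1}, using the energy variables supplied by Propositions \ref{prop:1} and \ref{prop:2}. First I would substitute the kinematic assumption \eqref{eq:hip_cinematica} and the factorization \eqref{eq:factorization_prop_2} into the three functionals \eqref{eq:T_1}, \eqref{eq:U_1}, \eqref{eq:We_1}; by the two propositions this expresses $T$, $U$ in terms of $\textbf{r}$ (and $\dot{\textbf{r}}$) only, while Assumption 2 rewrites $W_E$ as in \eqref{eq:def_WE_2}. Taking the variation $\var\textbf{r}$, one gets $\var T = \int_\Omega \dot{\textbf{r}}^\top \mathcal{M}\,\var\dot{\textbf{r}}\,d\X$, $\var U = \int_\Omega (\mathcal{F}\textbf{r})^\top \mathcal{K}\,\mathcal{F}\var\textbf{r}\,d\X$, and $\var W_E = -\int_\Omega B_d(u_d)\cdot\var\textbf{r}\,d\X - \int_{\p\Omega_\sigma}\tau_\p\cdot\mathcal{B}(\var\textbf{r})\,d\mathtt{s}$.

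Next I would integrate by parts in time in the $\var T$ term (killing the endpoint contributions by \eqref{eq:admisible_2}) and apply Lemma \ref{theo:integration_by_parts_operator} to the $\var U$ term with $v = \mathcal{K}\mathcal{F}\textbf{r}$ and $w = \var\textbf{r}$, which moves $\mathcal{F}$ off $\var\textbf{r}$ at the cost of a boundary term $\int_{\p\Omega}\mathcal{B}(\var\textbf{r})^\top\mathcal{Q}_\p\,\mathcal{B}(\mathcal{K}\mathcal{F}\textbf{r})\,d\mathtt{s}$. Collecting the interior terms gives the Euler–Lagrange equation $\mathcal{M}\ddot{\textbf{r}} + \mathcal{F}^*\mathcal{K}\mathcal{F}\textbf{r} = B_d(u_d)$, and the arbitrariness of $\var\textbf{r}$ on $\p\Omega_\sigma$ forces the natural boundary condition identifying $\tau_\p$ with the relevant block of $\mathcal{Q}_\p\,\mathcal{B}(\mathcal{K}\mathcal{F}\textbf{r})$, while on $\p\Omega_u$ we have $\var\textbf{r}=0$ by \eqref{eq:admisible_1}. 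Then I introduce $p = \mathcal{M}\dot{\textbf{r}}$ and $\q = \mathcal{F}\textbf{r}$ as state variables: the first-order system becomes $\dot{p} = -\mathcal{F}^* e_\q + B_d u_d$ with $e_\q = \mathcal{K}\q$, and $\dot{\q} = \mathcal{F}\dot{\textbf{r}} = \mathcal{F} e_p$ with $e_p = \mathcal{M}^{-1}p$; since $e_p = \var_p H$ and $e_\q = \var_\q H$ for $H$ in \eqref{eq:def_HAMILTONIANO} (immediate from Propositions \ref{prop:1}, \ref{prop:2}), this is exactly \eqref{eq:def_dPHS_prop3} with $\mathcal{G}=[B_d;0]$, $y_d = B_d^* e_p$, and $\mathcal{J}$ skew-adjoint because $(\mathcal{F}^*)^* = \mathcal{F}$.

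For the power balance, I would compute $\p_t H = \int_\Omega (e_p^\top f_p + e_\q^\top f_\q)\,d\X$ using the chain rule, substitute $f_p = -\mathcal{F}^* e_\q + B_d u_d$ and $f_\q = \mathcal{F} e_p$, so the two $\mathcal{F}$-terms combine into $\int_\Omega (e_p^\top \mathcal{F} e_\q - e_\q^\top \mathcal{F}^* e_p)\,d\X$ — wait, more precisely $\int_\Omega(-e_p^\top\mathcal{F}^* e_\q + e_\q^\top\mathcal{F} e_p)\,d\X$, which by Lemma \ref{theo:integration_by_parts_operator} (applied with $v=e_\q$, $w=e_p$) equals the boundary integral in \eqref{eq:def_HAMILTONIANO_dot_OP}, while the remaining interior term $\int_\Omega e_p^\top B_d u_d\,d\X = \int_\Omega y_d^\top u_d\,d\X$ gives the distributed port. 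The equivalent sum form \eqref{eq:def_HAMILTONIANO_dot_SUM} is then just the other expression \eqref{eq: Integral_operator_F1} for the same boundary term from the Lemma.

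I expect the main obstacle to be bookkeeping rather than conceptual: one must be careful that the variational derivatives in \eqref{eq:U_1} are taken with respect to $\textbf{r}$ (not $\q$) when deriving the Euler–Lagrange equations — the factor-of-two and the symmetry $\mathcal{K}=\mathcal{K}^\top$ must be handled correctly so that $\var U$ produces $\mathcal{F}^*\mathcal{K}\mathcal{F}\textbf{r}$ and not something asymmetric — and that the boundary splitting $\p\Omega = \p\Omega_u\cup\p\Omega_\sigma$ is used consistently: the Lemma produces an integral over all of $\p\Omega$, and matching it against $\var W_E$ requires that the contribution over $\p\Omega_u$ vanish (since $\var\textbf{r}=\mathcal{B}(\var\textbf{r})=0$ there up to the relevant order) while the contribution over $\p\Omega_\sigma$ is identified with the imposed traction $\tau_\p$. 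A secondary subtlety is showing that the map $\q\mapsto\q(\X,t)=\mathcal{F}\textbf{r}$ is a legitimate change of state variable, i.e. that the dynamics closes on $(p,\q)$; this is automatic here because the compatibility equation $\dot{\q}=\mathcal{F} e_p$ is built into the formulation, exactly as in the Timoshenko example where the last two equations of \eqref{eq:TBT-PHS_model} are compatibility relations.
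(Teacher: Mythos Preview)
Your proposal is correct and follows essentially the same route as the paper's proof: compute $\var T$, $\var U$, $\var W_E$ in terms of $\textbf{r}$, integrate by parts in time for $\var T$, apply Lemma~\ref{theo:integration_by_parts_operator} to $\var U$ (with the $\p\Omega_u$ contribution killed by \eqref{eq:admisible_1}), invoke the fundamental lemmas of variational calculus to obtain the Euler--Lagrange equation plus natural boundary conditions, then adjoin $\dot{\q}=\mathcal{F}e_p$ to get the first-order PH form; the power balance is handled identically. The only step you gloss over slightly is the passage $\int_\Omega e_p^\top B_d(u_d)\,d\X = \int_\Omega y_d^\top u_d\,d\X$: the paper applies Lemma~\ref{theo:integration_by_parts_operator} here too (treating $B_d$ as an operator of that class) and obtains an extra boundary integral $\int_{\p\Omega}\mathcal{B}(u_d)^\top\mathcal{Q}_\p\,\mathcal{B}(e_p)\,d\sx$ which vanishes because $u_d=0$ on $\p\Omega$.
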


\begin{proof}
First of all we have $\textbf{u}(\CX,t) = \lambda_1(\X^c)\, \textbf{r}(\X,t) $, $\var\textbf{u}(\CX,t) = \lambda_1(\X^c)\, \var\textbf{r}(\X,t) $,  $ \dot{\textbf{u}}(\CX,t) = \lambda_1(\X^c) \, \dot{\textbf{r}}(\X,t)$, $ \var\dot{\textbf{u}}(\CX,t) = \lambda_1(\X ^c) \, \var\dot{\textbf{r}}(\X,t)$, $ \varepsilon(\CX,t) =  \lambda_2(\X^c) \, \mathcal{F}\,\textbf{r}(\X,t)$, $ \var\varepsilon(\CX,t) =  \lambda_2(\X^c) \, \mathcal{F}\,\var\textbf{r}(\X,t)$, and
$$
\begin{array}{rl}
\var T = & \dint_{\Omega_T} \rho(\X)\dot{\textbf{u}}(\CX,t) \cdot \var\dot{\textbf{u}}(\CX,t) d\CX = \dint_{\Omega} \var\dot{\textbf{r}}(\X,t)^\top \mathcal{M}(\X) \dot{\textbf{r}}(\X,t) d\X \\[4mm]
\var W_E = & \!\! -  \dint_{\Omega} \var\textbf{r}(\X,t)^\top B_d(u_d(\X,t)) \, d\X   -  \dint_{\p\Omega_{\sigma}} \!\!  \mathcal{B}(\var\textbf{r}(\mathtt{s}_{\sigma},t))^\top \, {\tau}_\p(\mathtt{s}_{\sigma},t) \, d\mathtt{s}_{\sigma} \\[4mm]
\var U = & \dint_{\Omega_T} \sigma(\CX,t) \cdot \var\varepsilon(\CX,t) \, d\CX = \dint_{\Omega} e_\q(\X,t)^\top \mathcal{F}\,\var\textbf{r}(\X,t) d\X
\end{array}
$$
Applying Lemma \ref{theo:integration_by_parts_operator} to the right side of $\var U$, and considering that due to \eqref{eq:admisible_1} the integral in the portion of the boundary $\p\Omega_{u}$ where the essential boundary conditions are specified is zero, since $ \var\textbf{r}(\mathtt{s}_{u},t) = 0$, then we have 
$$
\var U = \dint_{\Omega} \var\textbf{r}(\X,t)^\top \mathcal{F}^*\,e_\q(\X,t) d\X + \int_{\p \Omega_{\sigma}} \! \mathcal{B}( \var\textbf{r}(\mathtt{s}_{\sigma},t))^\top  \mathcal{Q}_\p(\mathtt{s}_{\sigma}) \, \mathcal{B}(e_\q(\mathtt{s}_{\sigma},t)) \,   d \mathtt{s}_{\sigma}
$$
As a previous step to apply Hamilton's principle, we integrate by parts respect to time the variation of the kinetic energy $\var T$, and considering that due to \eqref{eq:admisible_2} $\var\textbf{r}(\X,t_1)=\var\textbf{r}(\X,t_2)=0$, we obtain
$$
\dint_{t_1}^{t_2} \var T \, dt = -\dint_{t_1}^{t_2}  \dint_{\Omega} \var{\textbf{r}}(\X,t)^\top \mathcal{M}(\X) \, \ddot{\textbf{r}}(\X,t)\, d\X \, dt \; + \; \ub{\dint_{\Omega} (\var{\textbf{r}}(\X,t)^\top \mathcal{M}(\X) \, \dot{\textbf{r}}(\X,t) )\big|_{t_1}^{t_2} \, d\X}{=0}
$$ 
Then, with all the above we apply Hamilton's principle and we obtain 
$$
\begin{array}{r}
\dint_{t_1}^{t_2} \!\! \left[ \begin{matrix} \blanco{|} \\[3mm] \blanco{|}   \end{matrix}  \!\!
\dint_{\Omega} \var{\textbf{r}}(\X,t)^\top \left[\mathcal{M}(\X)\, \ddot{\textbf{r}}(\X,t) + \mathcal{F}^*\,e_\q(\X,t) - B_d(u_d(\X,t)) \right] d\X \; + ... \quad\quad \quad \quad   \quad \quad \quad\quad \quad   \quad \quad\quad \quad  \quad \quad \right. \\[-2mm]
... 
\left. \dint_{\p \Omega_{\sigma}} \!\!  \mathcal{B}(\var{\textbf{r}}(\mathtt{s}_{\sigma},t))^\top \left[ \mathcal{Q}_\p(\mathtt{s}_{\sigma}) \, \mathcal{B}( e_\q(\mathtt{s}_{\sigma},t))  - {\tau}_\p(\mathtt{s}_{\sigma},t) \right]  d\mathtt{s}_{\sigma} \right] \! dt = 0
\end{array}
$$
so applying Lemma \ref{lemma:1} and Lemma \ref{lemma:2} respectively  to each term in the above expression (see Appendix \ref{ann:Lemmas_variational}), we obtain the following Lagrangian model
\begin{align}
\forall \X \in \Omega: & \quad \mathcal{M}(\X)\ddot{\textbf{r}}(\X,t) + \mathcal{F}^*\,e_\q(\X,t) - B_d(u_d(\X,t)) = 0  \label{eq:Lag_dinamica}      \\[1mm]
\forall \sx_{\sigma} \in \p\Omega_{\sigma}: & \quad\quad \quad \tau_\p(\mathtt{s}_{\sigma},t)  =  \mathcal{Q}_\p(\mathtt{s}_{\sigma}) \mathcal{B}( e_\q(\mathtt{s}_{\sigma},t))     \label{eq:Lag_BC_sigma}
\end{align}
Note that the dynamic equation  \eqref{eq:Lag_dinamica} together with $\dot{\q}(\X,t)=\mathcal{F}\, \dot{\textbf{r}}(\X,t)=\mathcal{F}\,e_p(\X,t)$  can be written equivalently as in \eqref{eq:def_dPHS_prop3} with $H[p,\q]$ 
the Hamiltonian defined in \eqref{eq:def_HAMILTONIANO}.
The power exchanged with the environment is given by
$$
\p_t {H} = \dint_{\Omega} \var_x H ^\top \dot{x}\, d\X = \dint_{\Omega} e_p^\top B_d(u_d) \, d\X - \dint_{\Omega} e_p^\top \mathcal{F}^*\,e_\q \, d\X + \dint_{\Omega} e_\q^\top \mathcal{F}\,e_p \, d\X
$$
so applying Lemma \ref{theo:integration_by_parts_operator} to the first term, and also to the last two terms in the equation above we obtain
$$
\p_t {H} =  \dint_{\Omega} u_d^\top B_d^*(e_p) \, d\X + \dint_{\p\Omega} \! \mathcal{B}( u_d)^\top \mathcal{Q}_\p \, \mathcal{B}( e_p ) \, d\sx + \dint_{\p\Omega} \! \mathcal{B}( e_p)^\top \mathcal{Q}_\p \, \mathcal{B}( e_\q ) \, d\sx
$$
From the first term  above we obtain the power conjugated distributed output $y_d(\X,t)$ defined in \eqref{eq:def_dPHS_prop3}, and the second term is equal to zero because $u_d(\X,t)$ is not defined on the boundary ($u_d = 0$ in $\p \Omega$).  So considering the above, the last expression is equal to \eqref{eq:def_HAMILTONIANO_dot_OP} and equivalent to  \eqref{eq:def_HAMILTONIANO_dot_SUM}.
\end{proof}

\begin{corollary} \label{cor:boundary_ports}
The boundary ports $u_\p,\, y_\p$ and the boundary operators $\mathcal{B}_\p,\,\mathcal{C}_\p$ are chosen according to the energy balance equation \eqref{eq:def_HAMILTONIANO_dot_OP}, such that
\begin{equation}
\dint_{\p \Omega} \! \mathcal{B}( e_p(\mathtt{s},t))^\top \mathcal{Q}_\p(\mathtt{s}) \, \mathcal{B}( e_\q (\mathtt{s},t))  \, d\sx = 
\dint_{\p \Omega} \!  y_\p(\mathtt{s},t)^\top u_\p(\mathtt{s},t)  \, d\sx 
\label{eq:boundary_ports_COR_ND}
\end{equation} 
where $\mathcal{B}_\p,$ and $\mathcal{C}_\p$ can be other defined depending on the considered boundary control variables. For example, it can be chosen $\mathcal{B}_\p = \mathcal{Q}_\p(\mathtt{s}) \mathcal{B}(\cdot)$ such that $u_\p(\mathtt{s},t) = \mathcal{Q}_\p(\mathtt{s}) \mathcal{B}( e_\q(\mathtt{s},t))$, and $\mathcal{C}_\p = \mathcal{B}(\cdot)$ such that $y_\p(\mathtt{s},t) =  \mathcal{B}(e_p(\mathtt{s},t))$. Then, according to Definition \ref{def:Stokes_Dirac} we have $\texttt{f}=[\texttt{f}_s,\texttt{f}_e,\texttt{f}_\p]^\top$ and $\texttt{e}=[\texttt{e}_s,\texttt{e}_e,\texttt{e}_\p]^\top$, where $\texttt{f}_s= \dot{x}$, $\texttt{f}_e=u_d$, $\texttt{f}_\p=u_\p$, $\texttt{e}_s= \var_x H$, $\texttt{e}_e=-y_d$, $\texttt{e}_\p=-y_\p$, then the set 
$
\mathscr{D}_{s}=\lbrace (\texttt{f},\texttt{e})\in \mathscr{B}  \,\, |\,\, \texttt{f}_s = \mathcal{J}\texttt{e}_s + \mathcal{G}\texttt{f}_e \, , \, \texttt{e}_e = -\mathcal{G}^{*} \texttt{e}_s \,,\, \texttt{f}_\p =  \mathcal{B}_\p \texttt{e}_s \, , \, \texttt{e}_\p = -\mathcal{C}_\p \texttt{e}_s  \rbrace
$ 
is a Stokes-Dirac structure \cite{brugnoli2020port}, and the energy exchange with the environment is determined by 
$
{\p_t} H =  \langle {y}_d | {u}_d \rangle_{in}^{\Omega} + \langle {y}_{\p} | {u}_{\p} \rangle_{in}^{\p \Omega} 
$.
\end{corollary}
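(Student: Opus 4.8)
\noindent The plan is to treat this as a verification on top of the energy balance \eqref{eq:def_HAMILTONIANO_dot_OP} already established in Theorem~\ref{theo:PHS_SD}, in three steps: (i) read off the boundary ports by factoring the boundary integrand of \eqref{eq:def_HAMILTONIANO_dot_OP} as a duality pairing, which fixes $u_\p$, $y_\p$ and hence $\mathcal{B}_\p$, $\mathcal{C}_\p$; (ii) substitute back to obtain the stated power balance; (iii) recognise the resulting subspace $\mathscr{D}_s$ as a Stokes--Dirac structure.

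\noindent For (i) I would observe that the boundary integrand $\mathcal{B}(e_p(\mathtt{s},t))^\top\mathcal{Q}_\p(\mathtt{s})\,\mathcal{B}(e_\q(\mathtt{s},t))$ in \eqref{eq:def_HAMILTONIANO_dot_OP} is separately linear in the two trace blocks $\mathcal{B}(e_p)$ and $\mathcal{B}(e_\q)$ of the co-energy variable $\var_x H=[e_p^\top\;e_\q^\top]^\top$. Hence \emph{any} pair of linear boundary operators $\mathcal{B}_\p,\mathcal{C}_\p$ acting on $\var_x H$ with $\mathcal{C}_\p(\var_x H)^\top\mathcal{B}_\p(\var_x H)=\mathcal{B}(e_p)^\top\mathcal{Q}_\p\,\mathcal{B}(e_\q)$ pointwise on $\p\Omega$ realises \eqref{eq:boundary_ports_COR_ND}; the canonical split $\mathcal{C}_\p=\mathcal{B}(\cdot)$ restricted to the $e_p$-block and $\mathcal{B}_\p=\mathcal{Q}_\p(\mathtt{s})\,\mathcal{B}(\cdot)$ restricted to the $e_\q$-block gives $y_\p=\mathcal{B}(e_p)$ and $u_\p=\mathcal{Q}_\p\mathcal{B}(e_\q)$, while any other admissible pair differs from this one by a boundary transformation preserving the pairing (collocated versus mixed boundary variables), which is why the statement leaves this choice open. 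Inserting the factorisation into \eqref{eq:def_HAMILTONIANO_dot_OP} immediately yields ${\p_t}H=\langle y_d\,|\,u_d\rangle_{in}^{\Omega}+\langle y_\p\,|\,u_\p\rangle_{in}^{\p\Omega}$, which is step (ii).

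\noindent For (iii), with the assignments $\texttt{f}_s=\dot x$, $\texttt{f}_e=u_d$, $\texttt{f}_\p=u_\p$, $\texttt{e}_s=\var_x H$, $\texttt{e}_e=-y_d$, $\texttt{e}_\p=-y_\p$ of the statement, I would evaluate the bilinear form $\langle\langle\cdot,\cdot\rangle\rangle$ of Definition~\ref{def:Stokes_Dirac} on two elements of $\mathscr{D}_s$: using $\texttt{f}_s=\mathcal{J}\texttt{e}_s+\mathcal{G}\texttt{f}_e$ with $\mathcal{J}=-\mathcal{J}^*$ and $\mathcal{G}^*$ the formal adjoint of $\mathcal{G}$ in \eqref{eq:def_dPHS_prop3}, the $\mathcal{G}$-contributions in the two domain pairings cancel against $\texttt{e}_e=-\mathcal{G}^*\texttt{e}_s$ (no boundary term arises since $\mathcal{G}$ acts as the matrix multiplication $B_d$), while the $\mathcal{J}$-contributions collapse --- by Lemma~\ref{theo:integration_by_parts_operator} applied to the off-diagonal blocks $\mathcal{F}$ and $-\mathcal{F}^*$ --- to a boundary term of the form $\int_{\p\Omega}\mathcal{B}(e_p)^\top\mathcal{Q}_\p\mathcal{B}(e_\q)\,d\mathtt{s}$ symmetrised over the two elements; by step (i) this matches $\langle\texttt{e}_{\p_1}|\texttt{f}_{\p_2}\rangle_{in}^{\p\Omega}+\langle\texttt{e}_{\p_2}|\texttt{f}_{\p_1}\rangle_{in}^{\p\Omega}$, so $\langle\langle\cdot,\cdot\rangle\rangle$ vanishes on $\mathscr{D}_s$, i.e. $\mathscr{D}_s\subseteq\mathscr{D}_s^\perp$. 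The reverse inclusion (maximality) is exactly the setting of the general construction of Stokes--Dirac structures for skew-adjoint operators with boundary triples in \cite{brugnoli2020port,le2005dirac}, which applies verbatim since $\mathcal{J}$, $\mathcal{B}_\p$ and $\mathcal{C}_\p$ have precisely the required form; together with step (ii) this yields the claimed description of the energy exchange.

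\noindent The step I expect to be the genuine obstacle is this maximality claim $\mathscr{D}_s^\perp\subseteq\mathscr{D}_s$: one has to show that $\mathcal{B}_\p$ and $\mathcal{C}_\p$ together parametrise every boundary degree of freedom carried by the traces, equivalently that the boundary form induced by $\mathcal{Q}_\p$ is non-degenerate on the relevant Sobolev-trace spaces. This rests on the anti-triangular block layout of $\mathcal{Q}_\p$ in \eqref{eq:Q_k_theo_1} and \eqref{eq:matrices_Qp_theo_1}, which makes $\mathcal{Q}_\p$ invertible once the leading blocks $W_N$ and $V_N$ have full rank, together with the boundary-triple and semigroup analysis of \cite{le2005dirac,le2004semigroup}; a self-contained argument would require fixing those function spaces explicitly, whereas here it suffices to invoke those results.
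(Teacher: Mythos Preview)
Your proposal is correct and follows essentially the same route as the paper. In fact, the paper provides no separate proof for this corollary at all: it simply states the factorisation $y_\p=\mathcal{B}(e_p)$, $u_\p=\mathcal{Q}_\p\mathcal{B}(e_\q)$ as a definition compatible with \eqref{eq:def_HAMILTONIANO_dot_OP}, and for the Stokes--Dirac claim defers entirely to the citation \cite{brugnoli2020port}; your steps (i)--(ii) reproduce this, and your step (iii) with its isotropy sketch and deferral of maximality to \cite{brugnoli2020port,le2005dirac} goes somewhat beyond what the paper itself offers.
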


\begin{remark} \label{rem:generally_N=1}
Note that the structure of $\mathcal{F}$ is mainly determined by Grad($\textbf{u}(\CX,t)$), where if there are no differential dependencies between the components of $\textbf{r}(\CX,t)$, the operator is constant and first order. Also note that the assumption of Hooke's law leads to linear models, but in general this could be removed to account for material nonlinearities.
\end{remark}

\begin{remark}\label{rem:boundary_ports_N1}
If the operator $\mathcal{F}$ in \eqref{eq:def_dPHS_prop3} is of order $N=1$, then the expression in \eqref{eq:boundary_ports_COR_ND} reduces to 
\begin{equation}
\dint_{\p \Omega} \! \mathcal{B}( e_p(\mathtt{s},t))^\top \mathcal{Q}_\p(\mathtt{s}) \, \mathcal{B}( e_\q(\mathtt{s},t))  \, d\sx = 
\dint_{\p\Omega} \!\! e_p(\mathtt{s},t)^\top P_\p(\mathtt{s}) \, e_\q(\mathtt{s},t) \, d\mathtt{s} = 
\dint_{\p \Omega} \!  y_\p(\mathtt{s},t)^\top u_\p(\mathtt{s},t)  \, d\sx 
\label{eq:boundary_ports_COR_1D}
\end{equation} 
so if we have $\mathcal{B}_\p = P_\p(\mathtt{s})$ and  $\mathcal{C}_\p = 1_n$, the boundary input is equal to the generalized boundary tractions, that is $u_\p(\mathtt{s},t) = {\tau}_\p(\mathtt{s},t)$, and the boundary output corresponds to the generalized boundary velocities, that is $y_\p(\mathtt{s},t) = \dot{\textbf{r}}_\p(\mathtt{s},t)$. Note that this class of differential operators ($N=1$) is considered in \cite{skrepek2019well} where it is shown the well-posedness of linear first order port-Hamiltonian systems in multidimensional domains $\Omega \subset \R{\ell}$.
\end{remark}

\noindent\textbf{Procedure:}  The modeling methodology is summarized in the following procedure. To formulate infinite-dimensional port-Hamiltonian models based on kinematic assumptions that lead to a displacement fields of the class  
$
\textbf{u}(\CX,t) = \lambda_1(\X^c) \, \textbf{r}(\X,t)
$, 
where the constitutive matrix $C$ and the density of material $\rho(\X)$ are known, follows the steps below:
\begin{enumerate}
\item Calculate the mass matrix $\mathcal{M}(\X)$ from \eqref{eq:def_Mass_matrix_distributed} and define the generalized momentum $p(\X,t)$ according to \eqref{eq:def_p_distributed}, that is \\[-0mm]
$$
\mathcal{M}(\X) = \rho(\X) \int_{\Omega^c} \! \lambda_1(\X^c)^\top \! \lambda_1(\X^c) \, d\X^c  \quad \longrightarrow \quad p(\X,t ) = \mathcal{M}(\X) \dot{\textbf{r}}(\X,t)
$$ 
\item Compute the nonzero components of the strain tensor $\varepsilon(\CX,t)$ using \eqref{eq:def_vec_epsilon} and factorize it according to \eqref{eq:factorization_prop_2}. Define the generalized strains  $\q(\X,t)$ according to \eqref{eq:def_q:distributed} and compute the stiffness matrix $\mathcal{K}(\X)$ from \eqref{eq:def_Stiffness_matrix_distributed}.\\[-0mm]
$$
\begin{array}{l}
 \varepsilon(\CX,t) =  \lambda_2(\X ^c) \mathcal{F}\, \textbf{r}(\X,t) \quad \longrightarrow \quad \q(\X,t ) = \mathcal{F}\, \textbf{r}(\X,t) \quad , \quad \mathcal{K}(\X) = \dint_{\Omega^c} \! \lambda_2(\X^c)^\top \! C\, \lambda_2(\X^c) \, d\X^c
\end{array}
$$ 
\item Apply Theorem \ref{theo:PHS_SD} to obtain the infinite-dimensional port-Hamiltonian system and define the boundary ports $u_\p,y_\p$ from Corollary \ref{cor:boundary_ports}.
$$
\mbox{PHS from Theorem \ref{theo:PHS_SD}} \quad \longrightarrow \quad \mbox{Boundary ports ($u_\p,y_\p$) from Corollary \ref{cor:boundary_ports} }
$$ 
\end{enumerate}


\begin{remark} \label{rem:Legendre_PHS}
The port-Hamiltonian model obtained in Theorem \ref{theo:PHS_SD} is different from the one obtained by the Legendre transformation of the Euler-Lagrange system  \eqref{eq:Lag_dinamica} \cite{schoberl2013analysis}. Furthermore, the latter is a field port-Lagrangian system \cite{nishida2005formal} defined using the state variable $z(\X,t) = [p(\X,t)^\top \;\; \textbf{r}(\X,t)^\top]^\top \in \mathbb{R}^{2n}$, which leads to an infinite-dimensional system associated with an algebraic skew-symmetric matrix $J=-J^\top$. This system is given by
\begin{equation}
\begin{array}{rl}
\ub{\begin{bmatrix}
\dot{p} \\ \dot{\textbf{r}}
\end{bmatrix}}{\dot{z}} = & \ub{\begin{bmatrix}
0 & -1_n \\ 1_n & 0 
\end{bmatrix}}{J=-J^\top}\ub{\begin{bmatrix}
e_p \\ e_r
\end{bmatrix}}{\var_z H} + \ub{\begin{bmatrix}
B_d \\ 0
\end{bmatrix}}{\mathcal{G}} u_d \\[10mm]
y_d = & \mathcal{G}^* \var_z H = B_d^*(e_p) 
\end{array}
\label{eq:def_dPHS_blue}
\end{equation}
where $H[p,\textbf{r}] \in \R{}$ is the Hamiltonian, which is given by
\begin{equation}
H[p,\textbf{r}] = T[p] + U[\textbf{r}] = \mfrac{1}{2} \dint_{\Omega}  p(\X,t)^\top \mathcal{M}(\X)^{-1} p(\X,t) + (\mathcal{F}\,\textbf{r}(\X,t))^\top \mathcal{K}(\X)\, (\mathcal{F}\,\textbf{r}(\X,t)) \,d\X 
\label{eq:HAMILTONIAN_Z}
\end{equation}
and $e_r(\X,t)$ given by
\begin{equation}
 e_r(\X,t) = \mathcal{F}^*(\mathcal{K(\X)}\, \mathcal{F} \, \textbf{r}(\X,t) )
\label{eq:var_Hr}
\end{equation} \\[-12mm]

is the variational derivative of the Hamiltonian $H[p,\textbf{r}]$ respect to $\textbf{r}(\X,t)$ (see Appendix \ref{ann:proof_REMARK_LEGENDRE} for the proof).
\end{remark}

\section{Reddy's plate model}

\noindent In this section, the plate model based on Reddy's third-order shear deformation theory \cite{reddy1984simple} is presented. This model generalizes the classic model of Mindlin's  plate, which turn out to be a particular case of Reddy's plate model. In addition, because this theory considers higher order terms, the kinematic assumption is closer to reality, which in particular implies that these models describe more accurately the shear stress contributions to the elastic potential energy, thus avoiding the use of correction factors and the shear locking problem in finite element approximations (as in the case of Mindlin's plate or the Timoshenko beam). Finally, since there is a better description of shear stresses, these models are particularly useful for describing the dynamics of thick or layered beams and plates. For more details, see \cite{reddy1984simple} and \cite[Chapter 11]{reddy2003mechanics}.

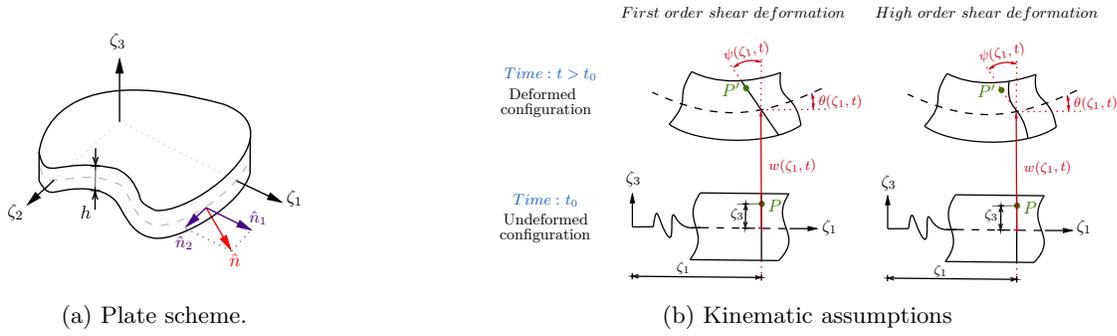
\begin{figure}[t]
\centering
\begin{minipage}[b]{.32\linewidth}
     \begin{center}
		\tikzset{every picture/.style={line width=0.75pt}} 

\scalebox{0.70}{
\begin{tikzpicture}[x=0.75pt,y=0.75pt,yscale=-1,xscale=1]

\draw [color={rgb, 255:red, 255; green, 0; blue, 0 }  ,draw opacity=1 ]   (292.68,182.67) -- (309.99,212.27) ;
\draw [shift={(311,214)}, rotate = 239.68] [fill={rgb, 255:red, 255; green, 0; blue, 0 }  ,fill opacity=1 ][line width=0.08]  [draw opacity=0] (12,-3) -- (0,0) -- (12,3) -- cycle    ;
\draw [color={rgb, 255:red, 0; green, 0; blue, 0 }  ,draw opacity=1 ]   (230.33,120.67) -- (230.33,76) ;
\draw [shift={(230.33,74)}, rotate = 90] [fill={rgb, 255:red, 0; green, 0; blue, 0 }  ,fill opacity=1 ][line width=0.08]  [draw opacity=0] (12,-3) -- (0,0) -- (12,3) -- cycle    ;
\draw    (183.05,162.85) -- (164.38,180.14) ;
\draw [shift={(162.92,181.5)}, rotate = 317.19] [fill={rgb, 255:red, 0; green, 0; blue, 0 }  ][line width=0.08]  [draw opacity=0] (12,-3) -- (0,0) -- (12,3) -- cycle    ;
\draw  [color={rgb, 255:red, 0; green, 0; blue, 0 }  ,draw opacity=1 ][fill={rgb, 255:red, 155; green, 155; blue, 155 }  ,fill opacity=0 ] (256.33,184.67) .. controls (242,177.95) and (246.75,176.76) .. (243,170) .. controls (241.17,166.7) and (239.05,159.92) .. (228.33,154.67) .. controls (186.11,145.1) and (177,166) .. (172.33,142.67) .. controls (175.67,118) and (201,79.33) .. (305,108) .. controls (320.96,113.77) and (326.05,122.81) .. (324.52,132.7) .. controls (320.75,157.07) and (276.73,186.56) .. (256.33,184.67) -- cycle ;
\draw [color={rgb, 255:red, 0; green, 0; blue, 0 }  ,draw opacity=0.25 ] [dash pattern={on 4.5pt off 4.5pt}]  (173,153.33) .. controls (176.33,165.33) and (179,163.33) .. (198.33,161.33) .. controls (217.67,159.33) and (219.67,162.67) .. (229,165.33) .. controls (238.33,168) and (238.33,193.33) .. (257,195.33) .. controls (275.67,197.33) and (319,168.67) .. (324.43,145.35) ;
\draw    (213.03,139.77) -- (213,152.67) ;
\draw [shift={(213,152.67)}, rotate = 270.15] [color={rgb, 255:red, 0; green, 0; blue, 0 }  ][line width=0.75]    (0,3.35) -- (0,-3.35)(6.56,-1.97) .. controls (4.17,-0.84) and (1.99,-0.18) .. (0,0) .. controls (1.99,0.18) and (4.17,0.84) .. (6.56,1.97)   ;
\draw    (212.67,184.18) -- (212.67,170.18) ;
\draw [shift={(212.67,170.18)}, rotate = 90] [color={rgb, 255:red, 0; green, 0; blue, 0 }  ][line width=0.75]    (0,3.35) -- (0,-3.35)(6.56,-1.97) .. controls (4.17,-0.84) and (1.99,-0.18) .. (0,0) .. controls (1.99,0.18) and (4.17,0.84) .. (6.56,1.97)   ;
\draw [color={rgb, 255:red, 0; green, 0; blue, 0 }  ,draw opacity=0.5 ]   (213,152.67) -- (212.67,170.18) ;
\draw [color={rgb, 255:red, 4; green, 3; blue, 4 }  ,draw opacity=0.5 ] [dash pattern={on 0.84pt off 2.51pt}]  (323.89,200.05) -- (311,214) ;
\draw [color={rgb, 255:red, 0; green, 0; blue, 0 }  ,draw opacity=0.5 ] [dash pattern={on 0.84pt off 2.51pt}]  (311,214) -- (277.34,197.33) ;
\draw    (314.68,163.33) -- (346.53,178.47) ;
\draw [shift={(348.33,179.33)}, rotate = 205.43] [fill={rgb, 255:red, 0; green, 0; blue, 0 }  ][line width=0.08]  [draw opacity=0] (12,-3) -- (0,0) -- (12,3) -- cycle    ;
\draw [color={rgb, 255:red, 0; green, 0; blue, 0 }  ,draw opacity=0.14 ] [dash pattern={on 0.84pt off 2.51pt}]  (230.33,120.67) -- (165.57,178.48) ;
\draw [color={rgb, 255:red, 0; green, 0; blue, 0 }  ,draw opacity=0.15 ] [dash pattern={on 0.84pt off 2.51pt}]  (314.68,163.33) -- (230.33,120.67) ;
\draw [color={rgb, 255:red, 0; green, 0; blue, 0 }  ,draw opacity=1 ]   (172.33,163.33) .. controls (179.67,176) and (181,170.67) .. (196.33,169.33) .. controls (211.67,168) and (221,171.33) .. (227,177.33) .. controls (233,183.33) and (237,202.67) .. (255.67,204.67) .. controls (274.33,206.67) and (318.33,180.67) .. (324.33,158) ;
\draw    (172.33,142.67) -- (172.33,163.33) ;
\draw    (324.52,132.7) -- (324.33,158) ;
\draw [color={rgb, 255:red, 71; green, 4; blue, 132 }  ,draw opacity=1 ]   (324.54,198.45) -- (292.68,182.67) ;
\draw [shift={(326.33,199.33)}, rotate = 206.35] [fill={rgb, 255:red, 71; green, 4; blue, 132 }  ,fill opacity=1 ][line width=0.08]  [draw opacity=0] (12,-3) -- (0,0) -- (12,3) -- cycle    ;
\draw [color={rgb, 255:red, 71; green, 4; blue, 132 }  ,draw opacity=1 ]   (292.68,182.67) -- (278.79,195.95) ;
\draw [shift={(277.34,197.33)}, rotate = 316.27] [fill={rgb, 255:red, 71; green, 4; blue, 132 }  ,fill opacity=1 ][line width=0.08]  [draw opacity=0] (12,-3) -- (0,0) -- (12,3) -- cycle    ;

\draw (349,168.25) node [anchor=north west][inner sep=0.75pt]   [align=left] {$\displaystyle \zeta _{1}$};
\draw (148,178.25) node [anchor=north west][inner sep=0.75pt]   [align=left] {$\displaystyle \zeta _{2}$};
\draw (221.67,55.25) node [anchor=north west][inner sep=0.75pt]  [color={rgb, 255:red, 0; green, 0; blue, 0 }  ,opacity=1 ] [align=left] {$\displaystyle \zeta _{3}$};
\draw (308,214.47) node [anchor=north west][inner sep=0.75pt]  [color={rgb, 255:red, 255; green, 0; blue, 0 }  ,opacity=1 ]  {$\hat{n}$};
\draw (322.33,183.68) node [anchor=north west][inner sep=0.75pt]  [font=\small,color={rgb, 255:red, 71; green, 4; blue, 132 }  ,opacity=1 ]  {$\hat{n}_{1}$};
\draw (269,201.68) node [anchor=north west][inner sep=0.75pt]  [font=\small,color={rgb, 255:red, 71; green, 4; blue, 132 }  ,opacity=1 ]  {$\hat{n}_{2}$};
\draw (200.67,179.83) node [anchor=north west][inner sep=0.75pt]    {$h$};

\end{tikzpicture}
}
	\end{center}
    \label{fig:new_models}
	\vspace{-6mm}\subcaption{Plate scheme.}
  \end{minipage}%
  \begin{minipage}[b]{.66\linewidth}
          \begin{center}
		\include{sch_6}
	\end{center}
    \label{fig:kin_assumption_reddy}
	\vspace{-8mm}\subcaption{Kinematic assumptions}
	\end{minipage} \\[-2mm]
  \caption{Plate scheme and kinematic assumptions}
\label{fig:EB_TIM_assumptions}
\end{figure} 

\subsection{Model assumptions}

\noindent The kinematic assumptions in the first-order shear deformation models (Timoshenko beam and Mindlin's plate) are that the plane sections perpendicular to the neutral axis before deformation remain plane but not necessarily perpendicular to the neutral axis after deformation. In the high-order shear deformation models plane sections perpendicular to the neutral axis before the deformation transform into curved sections after deformation (see Figure \ref{fig:EB_TIM_assumptions}). The so-called Reddy models are third-order shear deformation models due to fact that the curve is described by a third-order polynomial that always satisfies the condition of zero tangential traction boundary conditions on the surfaces of the plate. \\[-3mm]

\noindent The problem starts with the assumption on the displacement field $\textbf{u}(\CX,t)$ given by
$$
\textbf{u}(\CX,t) = \begin{bmatrix}
\textbf{u}_1(\CX,t) \\ \textbf{u}_2(\CX,t) \\ \textbf{u}_3(\CX,t)
\end{bmatrix} = \begin{bmatrix}
-(\x{3} \psi_1(\x{1},\x{2},t) + \x{3}^2 \beta_1(\x{1},\x{2},t) + \x{3}^3 \phi_1(\x{1},\x{2},t) )\\[1mm]
-(\x{3} \psi_2(\x{1},\x{2},t) + \x{3}^2 \beta_2(\x{1},\x{2},t) + \x{3}^3 \phi_2(\x{1},\x{2},t) )\\[1mm] 
w(\x{1},\x{2},t)
\end{bmatrix}
$$
where $\psi_1,\,\psi_2$ are the rotations of normals to midplane about the $\x{1}$ and $\x{2}$ axes, respectively, $w$ is the vertical displacement to the mid-plane, and the functions $\beta_1,\,\beta_2,\, \phi_1,\, \phi_2$ will be determined using the condition that transverse shear stresses vanish on the top and bottom surfaces of the plate, that is $\sigma_{13}(\x{1},\x{2},\x{3}=\pm h/2,t) = 0$ and $\sigma_{23}(\x{1},\x{2},\x{3}=\pm h/2,t) = 0$. For isotropic plates these conditions are equivalent to $\varepsilon_{13}(\x{1},\x{2},\x{3}=\pm h/2,t) = 0$ and $\varepsilon_{23}(\x{1},\x{2},\x{3}=\pm h/2,t) = 0$ \cite{reddy1984simple}. Then, from \eqref{eq:strain_tensor_lineal} we obtain
$$
2\varepsilon_{13} =  \p_1 w -  \left( \psi_1 + 2\x{3}\beta_1 + 3\x{3}^2 \phi_1 \right) \quad , \quad 
2\varepsilon_{23} = \p_2 w -  \left( \psi_2 + 2\x{3}\beta_2 + 3\x{3}^2 \phi_2 \right)
$$
and from $\varepsilon_{13}(\x{1},\x{2},\x{3} = \pm h/2,t) = 0$ and $\varepsilon_{23}(\x{1},\x{2},\x{3} = \pm h/2,t) = 0$ we obtain $\beta_1,\beta_2 = 0$ and $ \phi_1 = \mfrac{4}{3h^2} \left( \p_1 w - \psi_1 \right)$, $ \phi_2 = \mfrac{4}{3h^2} \left( \p_2 w - \psi_2 \right)$,  where the constant $\alpha = 4/(3h^2)$ is introduced. With all the above variables, the displacement field $\textbf{u}(\CX,t)$ of the Reddy's plate is given by 
\begin{equation}
\begin{array}{rl}
\textbf{u}(\CX,t)
= &  \ub{\begin{bmatrix}
-(\x{3}-\alpha \x{3}^3) & 0 & 0 &  -\alpha \x{3}^3  & 0 \\[-2mm]
 0 & -(\x{3}-\alpha \x{3}^3) & 0 & 0  & -\alpha \x{3}^3 \\[-2mm]
  0& 0&  1 & 0  & 0
\end{bmatrix}}{\lambda_1(\X^c)}
\ub{ \begin{bmatrix}
\psi_1(\X,t) \\[-1.5mm] \psi_2(\X,t) \\[-1.5mm] w(\X,t) \\[-1.5mm] \p_1 w(\X,t)  \\[-1.5mm] \p_2 w(\X,t)
\end{bmatrix} }{\textbf{r}(\X,t)} 
\end{array}
\label{eq:reddy_plate_u_field}
\end{equation}
\noindent Note that with $\alpha = 0$ the displacement field becomes the same as Mindlin's plate \cite{brugnoli2019port}. On the other hand related to the constitutive equations, considering an isotropic material, the stress $\sigma(\X,t)$ is obtained from Hooke's law $\sigma(\X,t) = C \, \varepsilon(\X,t)$, where the constitutive matrix $C = C^\top > 0 $ is given by
\begin{equation}
C = \begin{bmatrix}
C_b & 0 \\[-1.5mm] 0 & C_s 
\end{bmatrix}  \quad , \quad \mbox{with } \quad C_b = \mfrac{E}{(1-\nu^2)} \begin{bmatrix}
1 & \nu & 0 \\[-1.5mm] \nu & 1 & 0 \\[-1.5mm] 0 & 0 & \frac{(1-\nu)}{2}
\end{bmatrix}  \quad , \quad   C_s = \begin{bmatrix}
G & 0 \\[-1.5mm] 0 & G
\end{bmatrix}
\label{eq:Constitutive_Reddy_Voigt}
\end{equation}
where $C_b$ is the constitutive matrix for plane stress, and $E,G,\nu$ are material properties.

\subsection{Infinite-dimensional port-Hamiltonian model of the Reddy plate}

\noindent Now that the displacement field is known and written according to \eqref{eq:hip_cinematica}, the proposed methodology allows us to systematically find the infinite-dimensional port-Hamiltonian model of the plate that is energetically consistent with the kinematic assumptions. Following the proposed methodology we have \\
%
%
%

\noindent 1. From \eqref{eq:def_Mass_matrix_distributed} we calculate the mass matrix as
\begin{equation}
\begin{array}{r}
\hspace*{-10mm}\mathcal{M}(\X) =  \rho(\X)\dint_{-h/2}^{h/2}\!\! \begin{bmatrix}
(\x{3} ^2 -2\alpha\x{3}^4 + \alpha^2 \x{3}^6) & 0 & 0 & \alpha(\x{3}^4 - \alpha \x{3}^6) & 0 \\[-1mm]
0 & (\x{3} ^2 -2\alpha\x{3}^4 + \alpha^2 \x{3}^6) & 0 & 0 & \alpha(\x{3}^4 - \alpha \x{3}^6) \\[-1mm]
0 & 0 &  1 & 0 & 0 \\[-1mm]
 \alpha(\x{3}^4 - \alpha \x{3}^6) & 0 & 0 & \alpha^2 \x{3}^6 & 0 \\[-1mm]
0 & \alpha(\x{3}^4 - \alpha \x{3}^6) & 0 & 0 & \alpha^2 \x{3}^6
\end{bmatrix} d\x{3} \\[12mm]
 =  \rho(\X)\begin{bmatrix}
\,(\bar{I}_2-2\alpha \bar{I}_4 + \alpha^2 \bar{I}_6) & 0 & 0 & \alpha (\bar{I}_4-\alpha \bar{I}_6) & 0 \\[-1mm]
0 & (\bar{I}_2-2\alpha \bar{I}_4 + \alpha^2 \bar{I}_6) & 0 & 0 & \alpha (\bar{I}_4-\alpha \bar{I}_6) \\[-1mm]
0 &  0 & \bar{I}_0 & 0 & 0 \\[-1mm]
\alpha (\bar{I}_4-\alpha \bar{I}_6) & 0 & 0 & \alpha^2 \bar{I}_6 & 0\\[-1mm]
0 & \alpha (\bar{I}_4-\alpha \bar{I}_6) & 0 & 0 & \alpha^2 \bar{I}_6
\end{bmatrix} \hspace*{6mm}
\end{array}
\label{eq:Mass_Reddy}
\end{equation}
where $\bar{I}_i \in \R{}$ with $i=0,2,4,\dots$ is defined as 
$
\bar{I}_i = \dint_{-h/2}^{h/2} \x{3}^i \, d\x{3} = \mfrac{h^{i+1}}{2^i(i+1)}
$. 
Then, from \eqref{eq:def_p_distributed} we have
\begin{equation}
p(\X,t) = \mathcal{M}(\X)\dot{\textbf{r}}(\X,t) = \begin{bmatrix} \rho(\bar{I}_2-2\alpha \bar{I}_4 + \alpha^2 \bar{I}_6) \dot{\psi}_1(\X,t)  +  \rho\alpha(\bar{I}_4-\alpha \bar{I}_6)\p_1 \dot{w}(\X,t)  \\[-1mm]
\rho(\bar{I}_2-2\alpha \bar{I}_4 + \alpha^2 \bar{I}_6) \dot{\psi}_2(\X,t)  +  \rho\alpha(\bar{I}_4-\alpha \bar{I}_6)\p_2 \dot{w}(\X,t)  \\[-1mm]
\rho \bar{I}_0 \dot{w}(\X,t) \\[-1mm]
\rho\alpha (\bar{I}_4-\alpha \bar{I}_6) \dot{\psi}_1(\X,t) + \rho \alpha^2 \bar{I}_6 \p_1 \dot{w}(\X,t) \\[-1mm]
\rho\alpha (\bar{I}_4-\alpha \bar{I}_6) \dot{\psi}_2(\X,t) + \rho \alpha^2 \bar{I}_6 \p_2 \dot{w}(\X,t)
\end{bmatrix} = \begin{bmatrix}
{p_1}(\X,t) \\[-1mm] {p_2}(\X,t) \\[-1mm] {p_3}(\X,t) \\[-1mm] {p_4}(\X,t) \\[-1mm] {p_5}(\X,t)
\end{bmatrix} 
\label{eq:momentum_Reddy_Voigt}
\end{equation}

\noindent 2. On the other hand, from \eqref{eq:def_vec_epsilon} we obtain the non-zero strain components $\varepsilon \subset \vec{\varepsilon}$, that is
\begin{equation}
\hspace{-2mm}\varepsilon(\CX,t) \!= \! \begin{bmatrix}
\varepsilon_b \\ \hline \\[-6mm] \varepsilon_s
\end{bmatrix} \! = \! \begin{bmatrix}
\varepsilon_1 \\[-1mm] \varepsilon_2 \\[-1mm] \varepsilon_4 \\ \hline\\[-6mm] \varepsilon_5 \\[-1mm] \varepsilon_6
\end{bmatrix} \! = \! \begin{bmatrix}
-(\x{3}-\alpha \x{3}^3)\, [ \p_1 \psi_1(\X,t)] - \alpha \x{3}^3 [\p_1^2 w(\X,t)] \\[-1mm]
-(\x{3}-\alpha \x{3}^3)\, [ \p_2 \psi_2(\X,t)] - \alpha \x{3}^3 [\p_2^2 w(\X,t)] \\[-1mm] 
-(\x{3}-\alpha \x{3}^3)[\p_2 \psi_1(\X,t) + \p_1 \psi_2(\X,t)] -\alpha \x{3}^3[\p_2\, \p_1 w(\X,t) + \p_1\, \p_2 w(\X,t)] \\[0mm] \hline \\[-5.5mm]
(1-3\alpha \x{3}^2)[\p_1 w(\X,t) - \psi_1(\X,t)] \\[-1mm]
(1-3\alpha \x{3}^2)[\p_2 w(\X,t) - \psi_2(\X,t)]
\end{bmatrix}
\label{eq:eps_highlight}
\end{equation}
\noindent From Proposition \ref{prop:2} we choose $m=8$ since there are eight functions that are independent of $\X^c$ in the strain vector $\varepsilon(\CX,t)$ (highlighted in square brackets in \eqref{eq:eps_highlight}). Then we seek to write  $\varepsilon(\CX,t)$ according to \eqref{eq:factorization_prop_2} considering that $\lambda_2(\X^c) \in \R{d\times m = 5 \times 8}$ and $\mathcal{F}$ of dimension $(m \times n)= (8 \times 5)$. That is
\begin{equation}
\lambda_2(\X^c) = \begin{bmatrix}
-(\x{3}-\alpha \x{3}^3) & 0  & 0 & 0 & 0 & -\alpha \x{3}^2 & 0 & 0 \\[-1mm]
 0 & \!\!\!\!\!\!-(\x{3}-\alpha \x{3}^3) & 0 & 0 & 0 & 0 & -\alpha \x{3}^2 & 0 \\[-1mm]
 0 & 0 & \!\!\!\!\!\!-(\x{3}-\alpha \x{3}^3) & 0 & 0 & 0 & 0 & -\alpha \x{3}^2 \\[-1mm]
 0 & 0 & 0 & \!\!\!\!(1-3\alpha\x{3}^2) & 0 & 0 & 0 & 0\\[-1mm]
 0 & 0 & 0 & 0 & \!\!\!\!(1-3\alpha\x{3}^2) & 0 & 0 & 0
\end{bmatrix}
\end{equation}
$ $\\[-7mm]
\begin{equation}
\;\;\mathcal{F} = \left[ \!\! \begin{array}{c c | c | c c}
\p_1 & 0 & 0 & 0 & 0 \\[-1.5mm] 0 & \p_2 & 0 & 0 & 0 \\[-1.5mm] \p_2 & \p_1 & 0 & 0 & 0 \\ \hline \\[-6mm] -1 & 0 & \p_1 & 0 & 0 \\[-1.5mm] 0 & -1 & \p_2 & 0 & 0 \\ \hline \\[-6mm]
0 & 0 & 0 & \p_1 & 0 \\[-1.5mm]
0 & 0 & 0 & 0 & \p_2 \\[-1.5mm]
0 & 0 & 0 & \p_2 & \p_1
\end{array} \!\! \right]
\;\to \;\;
\q(\X,t) = \mathcal{F}\,\textbf{r}(\X,t) = \begin{bmatrix}
\p_1 \psi_1(\X,t) \\[-1mm] \p_2 \psi_2(\X,t) \\[-1mm] \p_2 \psi_1(\X,t) + \p_1 \psi_2(\X,t) \\[-1mm] \p_1 w(\X,t)-\psi_1(\X,t) \\[-1mm] \p_2 w(\X,t)-\psi_2(\X,t) \\[-1mm] \p_1^2 w(\X,t) \\[-1mm] \p_2^2 w(\X,t) \\[-1mm] \p_2 \p_1 w(\X,t)  + \p_1\p_2 w(\X,t)
\end{bmatrix} = \begin{bmatrix}
 {\q_1}(\X,t) \\[-1mm] {\q_2}(\X,t) \\[-1mm] {\q_3}(\X,t) \\[-1mm] {\q_4}(\X,t) \\[-1mm] {\q_5}(\X,t) \\[-1mm] {\q_6}(\X,t) \\[-1mm] {\q_7}(\X,t) \\[-1mm] {\q_8}(\X,t)
\end{bmatrix} 
\label{eq:F_Reddy_plate}
\end{equation}
with $\mathcal{F}$ a differential operator of order $N=1$. From \eqref{eq:def_Stiffness_matrix_distributed} we have
\begin{equation}
\begin{array}{r}
\mathcal{K}(\X) =  \dint_{-h/2}^{h/2}  \begin{bmatrix}
(\x{3} ^2 -2\alpha\x{3}^4 + \alpha^2 \x{3}^6)  \, C_b & 0 &  \alpha(\x{3}^4 - \alpha \x{3}^6) \, C_b \\[-1mm]
0 & (1-6\alpha\x{3}^2+9\alpha^2\x{3}^4)\,C_s & 0 \\[-1mm]
\alpha(\x{3}^4 - \alpha \x{3}^6) \, C_b & 0 & \alpha^2\x{3}^6 \, C_b
\end{bmatrix}  d\x{3} \\[6mm]
=    \begin{bmatrix}
(\bar{I}_2 -2\alpha\bar{I}_4 + \alpha^2 \bar{I}_6)\,C_b & 0 & \alpha(\bar{I}_4 - \alpha \bar{I}_6) \, C_b \\[-1mm]
0 & (\bar{I}_0-6\alpha\bar{I}_2+9\alpha^2\bar{I}_4)\,C_s & 0 \\[-1mm]
\alpha(\bar{I}_4 - \alpha \bar{I}_6) \, C_b & 0 & \alpha^2\bar{I}_6 \, C_b
\end{bmatrix} \hspace*{5.5mm}
\end{array}
\label{eq:Stiffnes_Reddy}
\end{equation}
$ $\\[-8mm]
\noindent 3. Considering that there are no distributed inputs, from Theorem \ref{theo:PHS_SD} we have
\begin{equation}
\begin{bmatrix}
\\[-6mm]
\dot{p}(\X,t) \\[-1mm] \dot{\q}(\X,t)
\end{bmatrix} = \begin{bmatrix}
\\[-6mm]
0 & -\mathcal{F}^* \\[-1mm] \mathcal{F} & 0
\end{bmatrix} \begin{bmatrix}
\\[-6mm]
\mathcal{M}^{-1}(\X) & 0 \\[-1mm] 0 & \mathcal{K}(\X)
\end{bmatrix} \begin{bmatrix}
\\[-6mm]
{p}(\X,t) \\[-1mm] {\q}(\X,t)
\end{bmatrix} 
\label{eq:Reddy_plate_model_Voigt}
\end{equation}
with Hamiltonian $H[p,\q]$ defined as in \eqref{eq:def_HAMILTONIANO}, and boundary variables defined from \eqref{eq:boundary_ports_COR_1D} as \\[-2mm]
\begin{equation}
\begin{array}{rl}
\p_t{H}  =  
\dint_{\p\Omega} 
\begin{bmatrix}
\\[-7mm]
e_{p_1} \\[-2mm] e_{p_2} \\[-2mm] e_{p_3} \\[-2mm] e_{p_4} \\[-2mm] e_{p_5}
\end{bmatrix}^\top 
\ub{\begin{bmatrix} 
\hat{n}_1 & 0 & \hat{n}_2 & 0 & 0 & 0 & 0 & 0 \\[-1.5mm]
0 & \hat{n}_2 & \hat{n}_1 & 0 & 0 & 0 & 0 & 0 \\[-1.5mm]
0 & 0 & 0 & \hat{n}_1 & \hat{n}_2 & 0 & 0 & 0 \\[-1.5mm]
0 & 0 & 0 & 0 & 0 & \hat{n}_1 & 0 & \hat{n}_2 \\[-1.5mm]
0 & 0 & 0 & 0 & 0 & 0 & \hat{n}_2 & \hat{n}_1
 \end{bmatrix}}{P_\p}
\begin{bmatrix}
\\[-7mm]
e_{\q_1} \\[-2mm] e_{\q_2} \\[-2mm] e_{\q_3} \\[-2mm] e_{\q_4} \\[-2mm] e_{\q_5} \\[-2mm] e_{\q_6} \\[-2mm] e_{\q_7} \\[-2mm] e_{\q_8} 
\end{bmatrix} \, d\sx 
= \dint_{\p \Omega} \!  y_\p^\top u_\p  \, d\sx 
\end{array}
\label{eq:Boundary_variables_Reddy_Voigt}
\end{equation}
%

\begin{remark}
Despite the fact that the methodology uses the Voigt-Kelvin notation (see Appendix \ref{ann:Voigt}) and a Cartesian reference system to obtain the models, using the similarities $\mathbb{L} \sim \mbox{Grad}$, and $-\mathbb{L}^* \sim \mbox{Div}$, then $\mbox{Grad} = - \mbox{Div}^*$ (see \cite[Theorem 4]{brugnoli2019port} for the proof), these models can be written with tensor notation which has the advantage of being independent of the coordinate system.
\end{remark}

\subsection{Tensor representation of the port-Hamiltonian Reddy plate model}

\noindent In order to write the Reddy plate model using tensor notation, first note that the differential operator $\mathcal{F}$ defined in \eqref{eq:F_Reddy_plate} and its formal adjoint $\mathcal{F}^*$ can be written using intrinsic tensor operators, and second, we can rewrite the  generalized displacements $\textbf{r}(\X,t)$ in \eqref{eq:reddy_plate_u_field} as 
\begin{equation}
\mathcal{F} = \begin{bmatrix}
\mbox{Grad} & 0 & 0 \\[-1mm]
-1_2 & \mbox{grad} & 0 \\[-1mm]
0 & 0 & \mbox{Grad}
\end{bmatrix} \quad , \quad 
\mathcal{F}^* = -\begin{bmatrix}
\mbox{Div} & 1_2 & 0 \\[-1mm]
0 & \mbox{div} & 0 \\[-1mm]
0 & 0 & \mbox{Div}
\end{bmatrix} \quad , \quad 
\textbf{r}(\X,t) = \begin{bmatrix}
\psi(\X,t)  \\[-1mm] w(\X,t)  \\[-1mm] \theta(\X,t)
\end{bmatrix} 
\end{equation}
where $\psi(\X,t) = [\psi_1(\X,t) \;\; \psi_2(\X,t)]^\top \in \R{2}$ groups the angles rotated by the cross section with respect to each coordinate axis, $w(\X,t) \in \R{}$ remains representing the vertical displacement of a point in the mid-plane of the plate, and $\theta(\X,t)= [\p_1 w(\X,t) \;\; \p_2 w(\X,t)]^\top \in \R{2}$ groups the first spatial derivatives of $w(\X,t)$ with respect to each coordinate axis. The mass matrix $\mathcal{M}(\X)$ in \eqref{eq:Mass_Reddy} and stiffness matrix  $ {\mathcal{K}}(\X)$ in \eqref{eq:Stiffnes_Reddy} can be rewritten as
\begin{equation}
\mathcal{M}(\X) = \rho(\X)\begin{bmatrix}
c_1 1_2 & 0 & c_2 1_2 \\[-1mm] 0 & \bar{I}_0 & 0 \\[-1mm]
c_2 1_2 & 0 & c_3 1_2
\end{bmatrix}
\quad , \quad
\doubleunderline{\mathcal{K}}(\X) = \begin{bmatrix}
c_1 \doubleunderline{C_{b}}(\cdot) & 0 & c_2 \doubleunderline{C_{b}}(\cdot) \\[-1mm] 0 & c_4 C_s & 0 \\[-1mm]
c_2 \doubleunderline{C_{b}}(\cdot) & 0 & c_3 \doubleunderline{C_{b}}(\cdot)
\end{bmatrix}
\end{equation}
with $c_1 = (\bar{I}_2 -2\alpha\bar{I}_4 + \alpha^2 \bar{I}_6)$, $c_2 = \alpha(\bar{I}_4 - \alpha \bar{I}_6)$, $c_3 = \alpha^2\bar{I}_6$, $c_4 = (\bar{I}_0-6\alpha\bar{I}_2+9\alpha^2\bar{I}_4)$, $C_s$ the constitutive matrix for shear stress as defined in \eqref{eq:Constitutive_Reddy_Voigt}, and $\doubleunderline{C_b}(\cdot)= \frac{E}{1-\nu^2}\left[(1-\nu)(\cdot) + \nu \mbox{tr}(\cdot)1_2 \right]$ the constitutive tensor for plane stress. With the above we redefine the energy variables as
\begin{equation}
\hspace{-0mm} p(\X,t) \! = \! \begin{bmatrix}
p_{\psi}(\X,t) \\[-1mm] p_{w}(\X,t) \\[-1mm] p_{\theta}(\X,t)
\end{bmatrix} \! = \! \begin{bmatrix}
\rho(\X)c_1 \dot{\psi}(\X,t) + \rho(\X)c_2 \dot{\theta}(\X,t) \\[-1mm] \rho(\X)\hat{I}_0 \dot{w}(\X,t) \\[-1mm]
\rho(\X)c_2 \dot{\psi}(\X,t) + \rho(\X)c_3 \dot{\theta}(\X,t)
\end{bmatrix}  , \;\;
\underline{\q}(\X,t) \! = \! \begin{bmatrix}
\q_{\psi}(\X,t) \\[-1mm] \q_{w}(\X,t) \\[-1mm] \q_{\theta}(\X,t)
\end{bmatrix} \! =  \! \begin{bmatrix}
\mbox{Grad}(\psi(\X,t)) \\[-1mm] \mbox{grad}(w(\X,t))-\psi(\X,t) \\[-1mm] \mbox{Grad}(\theta(\X,t)) 
\end{bmatrix}
\end{equation}
and co-energy variables as  
\begin{equation}
{e_p}(\X,t) = \begin{bmatrix}
e_{p_\psi}(\X,t) \\[-1mm] e_{p_w}(\X,t) \\[-1mm] e_{p_\theta}(\X,t)  
\end{bmatrix} = \begin{bmatrix}
\dot{\psi}(\X,t) \\[-1mm] \dot{w}(\X,t) \\[-1mm] \dot{\theta}(\X,t)  
\end{bmatrix}
\quad , \quad
\underline{e_\q}(\X,t) = 
\begin{bmatrix}
e_{\q_\psi}(\X,t) \\[-1mm] e_{\q_w}(\X,t) \\[-1mm] e_{\q_\theta}(\X,t)  
\end{bmatrix} =
\begin{bmatrix}
c_1 \doubleunderline{C_{b}}(\q_\psi(\X,t)) + c_2 \doubleunderline{C_{b}}(\q_\theta(\X,t)) \\[-1mm]
c_4 C_s \, \q_w(\X,t) \\[-1mm]
c_2 \doubleunderline{C_{b}}(\q_\psi(\X,t)) + c_3 \doubleunderline{C_{b}}(\q_\theta(\X,t))
\end{bmatrix}
\end{equation}
where $\q_{\psi}(\X,t)$, $\q_{\theta}(\X,t)$, $e_{\q_\psi}(\X,t)$, $e_{\q_\theta}(\X,t) \in \R{2\times 2}$ are second-order tensor fields, $p_{\psi}(\X,t)$, $p_{\theta}(\X,t)$, $\q_{w}(\X,t)$, $e_{p_{\psi}}(\X,t)$, $e_{p_{\theta}}(\X,t)$, $e_{\q_w}(\X,t) \in \R{2}$ are vector fields, and $p_{w}(\X,t)$, $e_{p_{w}}(\X,t) \in \R{}$ are scalar fields. 
Then, the Reddy's plate model in \eqref{eq:Reddy_plate_model_Voigt} written using tensor notation is given by
\begin{equation}
\begin{array}{r}
\ub{\begin{bmatrix}
\dot{p}_{\psi} \\[-1mm] \dot{p}_{w} \\[-1mm] \dot{p}_{\theta} \\[0.5mm]
\hline \\[-6mm]
\dot{\q}_{\psi} \\[-1mm] \dot{\q}_{w} \\[-1mm] \dot{\q}_{\theta}
\end{bmatrix}}{\dot{x}} =   \ub{\left[ \!\begin{array}{c c c | c c c}
\!0 & \!\!\!\!0 & \!\!\! 0 & \mbox{Div} \!\! & 1_2 \!\!\!\! & 0 \\[-1mm]
\!0 & \!\!\!\!0 & \!\!\! 0 & 0 & \mbox{div} \!\! & 0 \\[-1mm]
\!0 & \!\!\!\!0 & \!\!\! 0 & 0 & 0& \mbox{Div} \!\! \\[0.5mm]
\hline \\[-6mm]
\! \mbox{Grad}  & \!\!\!\! 0 & \! 0 &  0 & 0 & 0 \\[-1mm]
\!\!\!\! -1_2  &  \!\!\!\! \mbox{grad} \!\! & \! 0 & 0 & 0 & 0 \\[-1mm]
\!0 &\!\!\!\! 0 & \!\!\!\! \mbox{Grad} \! & 0 & 0 & 0
\end{array} \! \right]}{\mathcal{J}=-\mathcal{J}^*} \ub{\begin{bmatrix}
{e}_{p_\psi} \\[-1mm] {e}_{p_w} \\[-1mm] {e}_{p_\theta} \\[0.5mm]
\hline \\[-6mm]
e_{\q_{\psi}} \\[-1mm] e_{\q_{w}} \\[-1mm] e_{\q_{\theta}}
\end{bmatrix}}{\var_x H} 
\end{array}
\label{eq:Reddy_plate_Tensor}
\end{equation}
with Hamiltonian functional $H[p,\underline{\q}]>0 \in \R{}$ given by 
\begin{equation}
H[p,\underline{\q}] = \mfrac{1}{2} \dint_{\Omega} \! \left( [\mathcal{M}^{-1}p] \cdot \! p + [c_1 \doubleunderline{C_b}(\q_{\psi}) + c_2 \doubleunderline{C_b}(\q_{\theta})]\!:\!\q_{\psi} + c_4 C_s\q_w \!\cdot \! \q_w + [c_2 \doubleunderline{C_b}(\q_{\psi}) + c_3 \doubleunderline{C_b}(\q_{\theta})]\!:\!\q_{\theta}\right) d\X
\label{eq:Hamiltonian_Reddy_plate_Tensor}
\end{equation}
\noindent The boundary variables are obtained from the energy balance which is given by
\begin{equation}
\begin{array}{rl}
\p_t H =  \langle \var_x H,\, \dot{x} \rangle_{in}^{\Omega}  = & \hspace{-3mm}
 \dint_{\Omega} \left\{ [e_{p_\psi}\!\cdot \mbox{Div}(e_{\q_\psi}) + e_{\q_\psi}\!:\mbox{Grad}(e_{p_\psi})] + [e_{p_\theta}\!\cdot \mbox{Div}(e_{\q_\theta}) + e_{\q_\theta}\!:\mbox{Grad}(e_{p_\theta})] \right. \,+\\[1mm]
 & \left. \hspace{4mm}[e_{p_w}\, \mbox{div}(e_{\q_w}) + e_{\q_w}\!\cdot \mbox{grad}(e_{p_w})] \right\}  d\X
\end{array}
\end{equation}
so applying the integration by parts theorem for symmetric tensors to the first two terms of the integral above (see \cite[Theorem 8]{brugnoli2020port}), and the divergence theorem to the third term, then we obtain
\begin{equation}
\begin{array}{r}
\p_t H =  \dint_{\p\Omega} \left[ (e_{\q_\psi}\, \hat{n})\!\cdot e_{p_\psi} + (e_{\q_\theta}\, \hat{n})\!\cdot e_{p_\theta} + e_{p_w}\,(e_{\q_w}\!\cdot \hat{n}) \right] d\mathtt{s} = \dint_{\p\Omega} y_\p^\top u_\p \,d\mathtt{s}
\end{array}
\label{eq:boundary_Reddy_tensor}
\end{equation}
which is completely analogous to the expression in \eqref{eq:Boundary_variables_Reddy_Voigt}. 
\begin{remark} \label{rem:Reddy_equivalnce}
\noindent Note that the energy and co-energy variables related to the generalized strains in the model \eqref{eq:Reddy_plate_Tensor} can be written in terms of the variables of the model \eqref{eq:Reddy_plate_model_Voigt} as
$$
\begin{array}{rrr}
\q_{\psi}(\X,t) = \begin{bmatrix}
\q_1(\X,t) & \frac{1}{2}\q_3(\X,t) \\[0mm] \frac{1}{2}\q_3(\X,t) & \q_2(\X,t) 
\end{bmatrix} , &
\q_{w}(\X,t) = \begin{bmatrix}
\q_4(\X,t)  \\  \q_5(\X,t) 
\end{bmatrix} , &
\q_{\theta}(\X,t) = \begin{bmatrix}
\q_6(\X,t) & \frac{1}{2}\q_8(\X,t) \\[0mm] \frac{1}{2}\q_8(\X,t) & \q_7(\X,t) 
\end{bmatrix}  \\[6mm]
e_{\q_{\psi}}(\X,t) = \begin{bmatrix}
e_{\q_1}(\X,t) & e_{\q_3}(\X,t) \\[0mm] e_{\q_3}(\X,t) & e_{\q_2}(\X,t) 
\end{bmatrix} , &
e_{\q_{w}}(\X,t) = \begin{bmatrix}
e_{\q_4}(\X,t)  \\  e_{\q_5}(\X,t) 
\end{bmatrix} , &
e_{\q_{\theta}}(\X,t) = \begin{bmatrix}
e_{\q_6}(\X,t) & e_{\q_8}(\X,t) \\[0mm] e_{\q_8}(\X,t) & e_{\q_7}(\X,t) 
\end{bmatrix} 
\end{array}
$$
where $\q_{i}(\X,t)$, $e_{\q_i}(\X,t)$ with $i=1,...,8$ are the energy and co-energy variables related to the generalized strains of the model \eqref{eq:Reddy_plate_model_Voigt}, respectively.
\end{remark}

\begin{remark} 
The third-order shear deformation theory presented in this section is also applicable for beams, and note that dynamic models based on the first-order shear deformation theory are obtained from the Reddy's theory by setting $\alpha = 0$ \cite{reddy2003mechanics}. In this case, $\alpha = 0$ leads to the well known Mindlin plate model, and the port-Hamiltonian representation obtained by this methodology is equivalent to the one obtained first in \cite{macchelli2005port}, and then generalized using tensor notation in \cite{brugnoli2019port}.
\end{remark}

\textbf{Other examples:} More examples are shown in Appendix \ref{ann:examples}. The models presented are the well known: {D.1} One-dimensional elasticity (Truss bar), {D.2} Two-dimensional elasticity, {D.3} Three-dimensional elasticity, {D.4} Mindlin's plate, {D.5} Vibrating string, {D.6} Torsion in circular bars, {D.8$^*$} Euler-Bernoulli beam, {D.9$^*$} Kirchhoff-Love plate. Furthermore, new port-Hamiltonian representations are presented for {D.7} Reddy beam, {D.8} Rayleigh beam, {D.9} Kirchhoff-Rayleigh plate, which to the best of our knowledge is the first time they are presented as port-Hamiltonian systems.


\section{Conclusion and future work}

\noindent In this paper a three-steps methodology is proposed to systematically derive an infinite-dimensional port-Hamiltonian representation of multidimensional linear elastic models, subject to a given class of kinematic assumptions and constitutive relationships. The methodology assumes as a starting point that the displacement field can be factorized, which allows to define in a first step the mass matrix and the generalized momentum variables. In the second step, using the factorization of the non-zero components of the strain tensor, the stiffness matrix is calculated and the differential operator is  characterized, which allowing to define the generalized strain variables. Finally, an energetically consistent port-Hamiltonian representation of the model is proposed. This is mainly demonstrated using Lemma \ref{theo:integration_by_parts_operator} which is defined for the considered class of differential operators and Hamilton's principle.\\[-3mm]\noindent

\noindent It is shown that the proposed methodology allows to derive port-Hamiltonian representations in multidimensional spatial domains. It is effective in finding classical models such as the Timoshenko beam, Mindlin's plate or the general three-dimensional elasticity problem (among others), and other less classical models such as the ones based on more specific kinematic assumptions like the Reddy's third-order shear deformation theory. The main advantages of this procedure regarding the usual existing methods in the literature are: first, that it considerably reduces the amount of algebraic work to derive these models since no variational principle has to be applied and integration by parts has been done once for all, in general, over multidimensional domains. Second, no intuition is required to choose the set of state variables that guarantees the existence of an associated skew-adjoint differential operator, which are explicitly defined. In addition, the structure of the differential operator is determined as soon as the state variables are chosen. Third, an expression of the boundary variables is proposed which allows to define the boundary inputs and boundary outputs ports such that the proposed model satisfy the Stoke-Dirac structure. Lastly, since the method starts from kinematic assumptions and constitutive laws, it is not only suitable for rewriting pre-existing models within the port-Hamiltonian framework, but also potentially allows to directly derive new models in PH form.\\[-3mm]

\noindent As future work we will consider the extension of this methodology to the case of nonlinear elasticity, regarding both geometric and material nonlinearities. Also, the extension to constrained and multiphysics problems can be approached by using Hamilton's principle \cite{bedford1985hamilton}. Furthermore, in the same way that variational methods such as Hamilton's principle unify Lagrangian modeling and finite element discretization, it remains to be studied under what conditions or choices, this methodology unifies both port-Hamiltonian modeling and structure-preserving finite element discretization.

\begin{ack}                               
\noindent The first author acknowledges financial support from ANID/Becas/Doctorado Nacional/2021-21211290 (Chile) and the ISITE-BFC project - CPHS2D (France).  The second author acknowledges the EIPHI Graduate School (contract ANR-17-EURE-0002). The third author acknowledges the MSCA Project MODCONFLEX 101073558 and the ANR Project IMPACTS ANR-21-CE48-0018. The fourth author acknowledges Chilean FONDECYT 1231896 and CONICYT BASAL FB0008 projects.
\end{ack}

\appendix

\section{Lemmas from variational calculus} \label{ann:Lemmas_variational}

\noindent The two following lemmas from variational calculus serve as justification to obtain the equations of motion and the boundary conditions of models based on Hamilton's principle.

\begin{lemma} \label{lemma:1} 
(\cite{gurtin1973linear}, p.224.) Let $\mathcal{W}$ be an inner product space, and consider a $C^0$ field $h: \bar{\Omega}\times[t_1,t_2] \to \mathcal{W}$ with $\bar{\Omega}$ the closure $\bar{\Omega} = \Omega \cup \p \Omega$. If the equation
\begin{equation}
\int_{t_1}^{t_2} \! \int_\Omega h(\X,t) \cdot \eta(\X,t) \, d\X \,dt = 0
\label{eq:fundamental_lemma_variational_1}
\end{equation}
holds for every $C^\infty$ field $\eta:\bar{\Omega}\times [t_1,t_2]\to \mathcal{W}$ that vanishes at time $t_1$, at time $t_2$, and on $\p\Omega$, then $h(\X,t)=0$ on $\bar{\Omega}\times [t_1,t_2]$. 
\end{lemma}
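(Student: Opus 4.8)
The statement to prove is Lemma \ref{lemma:1}, the fundamental lemma of the calculus of variations (du Bois-Reymond type) in the multidimensional, time-dependent setting: if $\int_{t_1}^{t_2}\!\int_\Omega h\cdot\eta \, d\X\, dt = 0$ for all smooth $\eta$ vanishing at $t_1$, $t_2$ and on $\p\Omega$, then $h\equiv 0$ on $\bar\Omega\times[t_1,t_2]$.

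My plan is the standard localization-by-bump-function argument. First I would argue by contradiction: suppose $h$ does not vanish identically, so there is a point $(\X_0,t_0)$ with $h(\X_0,t_0)\neq 0$. Since $h$ takes values in an inner product space $\mathcal{W}$, pick a fixed vector $c\in\mathcal{W}$ (e.g.\ $c=h(\X_0,t_0)$) so that $\langle h(\X_0,t_0),c\rangle = \|h(\X_0,t_0)\|^2 > 0$. By continuity of $h$ (it is a $C^0$ field), the scalar function $(\X,t)\mapsto\langle h(\X,t),c\rangle$ is continuous and strictly positive on some open neighbourhood; if $(\X_0,t_0)$ lies on the boundary of $\bar\Omega\times[t_1,t_2]$ one first moves slightly into the interior, using continuity again, to a nearby point where $h$ is still nonzero, so without loss of generality we may take the neighbourhood to be of the form $B\times(a,b)$ with $\overline{B}\subset\Omega$ and $[a,b]\subset(t_1,t_2)$, on which $\langle h,c\rangle\geq\mu>0$.

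Next I would construct an admissible test field supported in that neighbourhood. Let $\varphi:\Omega\to\mathbb{R}$ be a $C^\infty$ bump function with $\varphi\geq 0$, $\mathrm{supp}\,\varphi\subset B$, and $\varphi>0$ on a smaller ball; likewise let $\chi:[t_1,t_2]\to\mathbb{R}$ be $C^\infty$, $\chi\geq 0$, supported in $(a,b)$, positive somewhere, and hence vanishing at $t_1$ and $t_2$. Then $\eta(\X,t) := \varphi(\X)\chi(t)\,c$ is a $C^\infty$ field $\bar\Omega\times[t_1,t_2]\to\mathcal{W}$ vanishing at $t_1$, $t_2$ and on $\p\Omega$, so it is admissible in \eqref{eq:fundamental_lemma_variational_1}. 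Substituting gives
\[
0 = \int_{t_1}^{t_2}\!\int_\Omega h(\X,t)\cdot\big(\varphi(\X)\chi(t)\,c\big)\, d\X\, dt = \int_{t_1}^{t_2}\!\int_\Omega \langle h(\X,t),c\rangle\,\varphi(\X)\,\chi(t)\, d\X\, dt .
\]
But the integrand is nonnegative everywhere (product of $\langle h,c\rangle$, which is $\geq 0$ on the support of $\varphi\chi$ — indeed $\geq\mu>0$ there — times nonnegative factors) and strictly positive on an open set of positive measure, so the integral is strictly positive, a contradiction. Hence $h\equiv 0$ on $\bar\Omega\times[t_1,t_2]$.

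I do not expect a genuine obstacle here; this is a routine compactness/continuity argument. The only point requiring a little care is the boundary case — if the putative nonvanishing point lies on $\p\Omega$ or at $t_1$ or $t_2$, one must first use continuity of $h$ to relocate to an interior point, since the test functions are forced to vanish there; and one must ensure $\overline{B}\subset\Omega$ so that $\varphi$ genuinely vanishes near $\p\Omega$ together with all derivatives, and similarly $[a,b]\subset(t_1,t_2)$. Existence of the smooth bump functions $\varphi$ and $\chi$ is standard. This is exactly the classical proof, and since the statement is quoted from \cite{gurtin1973linear}, reproducing this sketch suffices.
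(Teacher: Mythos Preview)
Your argument is correct and is the classical bump-function/localization proof of the fundamental lemma of the calculus of variations. Note, however, that the paper does not actually prove this statement: Lemma~\ref{lemma:1} is simply quoted from \cite{gurtin1973linear} without proof, so there is no ``paper's own proof'' to compare against --- your sketch supplies precisely the standard argument that the cited reference would contain.
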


\begin{lemma} \label{lemma:2}
(\cite{gurtin1973linear}, p.224.) Suppose that $\p\Omega$ consists of complementary regular sub-surfaces $\p\Omega_u$ and $\p\Omega_\sigma$. Let $\mathcal{W}$ be an inner product space, and consider a function $g: \p\Omega_\sigma \times[t_1,t_2] \to \mathcal{W}$ that is piecewise regular and continuous in time. If the equation
\begin{equation}
\int_{t_1}^{t_2} \! \int_{\p\Omega_\sigma} g(\sx,t) \cdot \eta(\sx,t) \, d\sx \, dt = 0
\label{eq:fundamental_lemma_variational_2}
\end{equation}
holds for every $C^\infty$ field $\eta:\bar{\Omega}\times [t_1,t_2]\to \mathcal{W}$ that vanishes at time $t_1$, at time $t_2$, and on $\p\Omega_u$, then $g(\sx,t)=0$ on $ \p\Omega_\sigma \times [t_1,t_2]$.
\end{lemma}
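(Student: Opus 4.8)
This is the classical fundamental lemma of the calculus of variations, restricted to the boundary portion $\p\Omega_\sigma$, and the plan is to prove it by a localization argument by contradiction using smooth bump functions, exactly as for the interior version in Lemma \ref{lemma:1}. The only structural difference is that the admissible test fields $\eta$ are prescribed on all of $\bar{\Omega}\times[t_1,t_2]$ and are required to vanish on the complementary piece $\p\Omega_u$ and at the two time instants $t_1,t_2$; hence one must check that sufficiently many such fields exist whose trace on $\p\Omega_\sigma$ is an arbitrarily localized bump.

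First I would suppose, for contradiction, that $g$ does not vanish identically, so that $g(\sx_0,t_0)\neq 0$ for some point $(\sx_0,t_0)$ that may be chosen in the relative interior of $\p\Omega_\sigma\times(t_1,t_2)$, with $\sx_0$ lying in the interior of a regular piece of $\p\Omega_\sigma$ on which $g(\cdot,\cdot)$ is continuous (such points form a dense set, so it is enough to exclude them). Writing $v:=g(\sx_0,t_0)\in\mathcal{W}$, we have $\langle g(\sx_0,t_0),v\rangle=|v|^2>0$, and by the piecewise regularity of $g$ together with its continuity in time there is a relatively open set $U\times(a,b)$, a neighbourhood of $(\sx_0,t_0)$ in $\p\Omega_\sigma\times(t_1,t_2)$ with $\overline{U}$ disjoint from $\p\Omega_u$ and $[a,b]\subset(t_1,t_2)$, on which $\langle g(\sx,t),v\rangle\ge\tfrac12|v|^2>0$.

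Next I would build the test field. Using a collar (tubular) neighbourhood of $\p\Omega$ inside $\bar{\Omega}$, or simply boundary-adapted local coordinates near $\sx_0$, I would pick a nonnegative $C^\infty$ function $\chi$ on $\bar{\Omega}$ supported in a small neighbourhood of $\sx_0$ whose restriction to $\p\Omega_\sigma$ is supported in $U$ and is strictly positive at $\sx_0$, together with a nonnegative $C^\infty$ time cutoff $\theta$ supported in $(a,b)$ with $\theta(t_0)>0$; since $\mathrm{supp}\,\chi$ is disjoint from $\p\Omega_u$ and $\theta$ vanishes near $t_1$ and $t_2$, the field $\eta(\X,t):=\chi(\X)\,\theta(t)\,v$ is admissible in the sense of the statement. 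Substituting it into the hypothesis yields
\[
0=\int_{t_1}^{t_2}\!\!\int_{\p\Omega_\sigma}\langle g(\sx,t),v\rangle\,\chi(\sx)\,\theta(t)\,d\sx\,dt\;\ge\;\tfrac12|v|^2\int_{t_1}^{t_2}\!\!\int_{\p\Omega_\sigma}\chi(\sx)\,\theta(t)\,d\sx\,dt\;>\;0,
\]
a contradiction. Hence $g$ vanishes on a dense subset of $\p\Omega_\sigma\times[t_1,t_2]$, and by the continuity-in-time and piecewise-continuity hypotheses it vanishes identically there.

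The main (and essentially only) obstacle I expect is the construction of this admissible extension: one must be sure that a localized bump on $\p\Omega_\sigma$ really does extend to a smooth field on $\bar{\Omega}\times[t_1,t_2]$ that is identically zero on $\p\Omega_u$. This is precisely why the contradiction point is taken in the relative interior of $\p\Omega_\sigma$, away from the common edge $\p\Omega_u\cap\overline{\p\Omega_\sigma}$, and why a collar neighbourhood (or boundary normal coordinates) is invoked to realize the bump; everything else is the routine bump-function machinery already underlying Lemma \ref{lemma:1}.
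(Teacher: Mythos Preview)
Your proof is correct and follows the standard localization-by-bump-function argument for the fundamental lemma of the calculus of variations. However, the paper does not actually prove this lemma: it is stated in Appendix~\ref{ann:Lemmas_variational} as a quoted result from \cite{gurtin1973linear}, p.~224, with no proof given. So there is no ``paper's own proof'' to compare against; you have supplied the standard argument that the cited reference presumably contains, and it is sound.
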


\section{Voigt-Kelvin notation}\label{ann:Voigt}

\noindent Since the stress and strain tensors are symmetric, they only have six independent components. The Voigt-Kelvin notation defines
$$
\begin{array}{ccclll}
\sigma_1 = \sigma_{11}, &\sigma_2 = \sigma_{22}, &\sigma_3 = \sigma_{33}, &\sigma_4 = \sigma_{12}, &\sigma_5 = \sigma_{13}, &\sigma_6 = \sigma_{23} \\
\varepsilon_1 = \varepsilon_{11}, &\varepsilon_2 = \varepsilon_{22},&\varepsilon_3 = \varepsilon_{33},&\varepsilon_4 = 2\varepsilon_{12},&\varepsilon_5 = 2\varepsilon_{13},&\varepsilon_6 = 2\varepsilon_{23}
\end{array}
$$

where the independent components of both tensors are grouped into the so-called Voigt-stress vector $\vec{\sigma}(\CX,t) \in \R{6}$ and Voigt-strain vector $\vec{\varepsilon}(\CX,t) \in \R{6}$, which are respectively given by
\begin{align}
\vec{\sigma}(\CX,t) = & \begin{bmatrix}
\sigma_1(\CX,t) & \cdots & \sigma_6(\CX,t)
\end{bmatrix}^\top \\
\vec{\varepsilon}(\CX,t) = & \begin{bmatrix}
\varepsilon_1(\CX,t) & \cdots & \varepsilon_6(\CX,t)
\end{bmatrix}^\top 
\end{align}
In addition, using the Voigt-Kelvin notation it is possible to express the constitutive relation as 
$$
\vec{\sigma}(\CX,t) = {C}\, \vec{\varepsilon}(\CX,t)  \quad \sim \quad  \underline{\sigma}(\CX,t)=\doubleunderline{C}: \underline{\varepsilon}(\CX,t)
$$
where ${C} = {C}^\top > 0 \in \R{6 \times 6}$ is a constitutive matrix. For example, for isotropic materials the constitutive fourth-order tensor for 3D elasticity ($\doubleunderline{C_{3D}}(\cdot) = 2\mu(\cdot) + \lambda\mbox{tr}(\cdot)1_3 $), and for plane stress in 2D ($\doubleunderline{C_{2D}}(\cdot) = \frac{E}{1-\nu^2}\left[ (1-\nu)(\cdot) + \nu \mbox{tr}(\cdot) 1_2 \right] $) reduce to
$$
\begin{array}{rcl}
\underline{\sigma_{3D}} = \doubleunderline{C_{3D}}: \underline{\varepsilon_{3D}}& \Leftrightarrow &  \begin{bmatrix}
\sigma_{11} \\[-1.5mm] \sigma_{22} \\[-1.5mm] \sigma_{33} \\[-1.5mm] \sigma_{12} \\[-1.5mm] \sigma_{13} \\[-1.5mm] \sigma_{23} 
\end{bmatrix} = \begin{bmatrix}
2\mu + \lambda & \lambda & \lambda & 0 & 0 & 0 \\[-1.5mm] 
\lambda & 2\mu +  \lambda & \lambda & 0 & 0 & 0 \\[-1.5mm] 
\lambda & \lambda & 2\mu + \lambda & 0 & 0 & 0 \\[-1.5mm] 
0 & 0 & 0 & \mu & 0 & 0 \\[-1.5mm]
0 & 0 & 0 & 0 & \mu & 0 \\[-1.5mm]
0 & 0 & 0 & 0 & 0 & \mu  
\end{bmatrix}\begin{bmatrix}
\varepsilon_{11} \\[-1.5mm] \varepsilon_{22} \\[-1.5mm] \varepsilon_{33} \\[-1.5mm] 2\varepsilon_{12} \\[-1.5mm] 2\varepsilon_{13} \\[-1.5mm] 2\varepsilon_{23} 
\end{bmatrix} \\[14mm]
\underline{\sigma_{2D}} = \doubleunderline{C_{2D}}: \underline{\varepsilon_{2D}}& \Leftrightarrow &  \begin{bmatrix}
\sigma_{11} \\[-1.5mm] \sigma_{22}  \\[-1.5mm] \sigma_{12}
\end{bmatrix} = \mfrac{E}{1-\nu^2}\begin{bmatrix} 
1   & \nu & 0 \\[-1.5mm]
\nu & 1   & 0 \\[-1.5mm] 
0   & 0   & \frac{1-\nu}{2}
\end{bmatrix}\begin{bmatrix}
\varepsilon_{11} \\[-1.5mm] \varepsilon_{22} \\[-1.5mm] 2\varepsilon_{12} 
\end{bmatrix}
\end{array}
$$ 

\noindent where $\mu$ and $\lambda$ are the Lamé constants defined as $\mu = \frac{E}{2(1+\nu)} = G, \; \lambda = \frac{\nu E}{(1+\nu)(1-2\nu)} 
$, where $E$  is the Young modulus, $\nu$ is the Poisson's ratio (ratio between transverse elongation and axial shortening) and the constant $\mu = G$ is also known as the shear modulus. Note also that the tensor contraction now reduces to 
\begin{equation}
\underline{\sigma}(\CX,t):\underline{\varepsilon}(\CX,t) \;=\; \vec{\sigma}(\CX,t)\cdot \vec{\varepsilon}(\CX,t) 
\end{equation}
Assuming a Cartesian reference system $\CX = \lbrace \x{1},\x{2},\x{3} \rbrace$, the Voigt-strain vector can be obtained by 
\begin{equation}
\vec{\varepsilon}(\CX,t) = \mathbb{L}\,\textbf{u}(\CX,t) \quad \sim \quad \underline{\varepsilon}(\CX,t) = \Grad(\textbf{u}(\CX,t))
\label{eq:def_vec_epsilon}
\end{equation}
with $\mathbb{L}$ a linear differential operator of dimension $(6\times 3)$ given by 
\begin{equation}
\mathbb{L} = \begin{bmatrix}
\p_1 &    0 & 0    \\[-1.5mm]
0    & \p_2 & 0    \\[-1.5mm]
0    &   0  & \p_3 \\[-1.5mm]
\p_2 & \p_1 & 0    \\[-1.5mm]
\p_3 &   0  & \p_1 \\[-1.5mm]
0    & \p_3 & \p_2 
\end{bmatrix} \; \sim \; \mbox{Grad}, \quad  \quad \mbox{with } \p_k = \mfrac{\p}{\p \x{k}^{}}, \; k = \lbrace 1,2,3 \rbrace
\label{eq:def_operator_GRAD}
\end{equation} 
\noindent This is also true for two-dimensional tensor fields, where in that case $\mathbb{L}_{2D}$ is of dimension ($3 \times 2$) and is given by
\begin{equation}
\mathbb{L}_{2D} = \begin{bmatrix}
\p_1 &    0     \\[-1.5mm]
0    & \p_2     \\[-1.5mm]
\p_2 & \p_1  
\end{bmatrix} \; \sim \; \mbox{Grad}, \quad  \quad \mbox{with } \p_k = \mfrac{\p}{\p \x{k}^{}}, \; k = \lbrace 1,2 \rbrace
\label{eq:def_operator_GRAD_2D}
\end{equation}
\noindent Note that $\mbox{Div}(\underline{\sigma})= \nabla \cdot \underline{\sigma} = \mathbb{L}^\top\vec{\sigma} = -\mathbb{L}^*\vec{\sigma} \sim - \mbox{Grad}^*(\underline{\sigma})$, where Div($\cdot$) is the tensor divergence operator. Then by similarity it can be seen that $\mbox{Div} = - \mbox{Grad}^*$ (see \cite[Theorem 4]{brugnoli2019port} for the proof).

\section{Proof of Remark \ref{rem:Legendre_PHS} } \label{ann:proof_REMARK_LEGENDRE}
\noindent The kinetic energy $T[\dot{\textbf{r}}]$ and the elastic potential energy $U[{\textbf{r}}]$ are defined as
\begin{equation}
T[\dot{\textbf{r}}] = \dint_\Omega \!\! \mathscr{T}[\dot{\textbf{r}}] \, d\X = \mfrac{1}{2}\dint_\Omega \dot{\textbf{r}}^\top \mathcal{M}\, \dot{\textbf{r}} \, d\X \quad , \quad U[{\textbf{r}}] = \dint_\Omega \!\! \mathscr{U}[{\textbf{r}}] \, d\X = \mfrac{1}{2} \dint_\Omega (\mathcal{F} \textbf{r} )^\top \mathcal{K}\, (\mathcal{F} \textbf{r}) \, d\X 
\label{eq:ann_definition_T_U}
\end{equation}
then, the Hamiltonian density $\mathscr{H}[\dot{\textbf{r}},\textbf{r}]$ is defined as the Legendre transform of $\mathscr{L}[\dot{\textbf{r}},\textbf{r}] = \mathscr{T}[\dot{\textbf{r}}] - \mathscr{U}[\textbf{r}]$. So, applying the Legendre operator to $\mathscr{L}[\dot{\textbf{r}},\textbf{r}]$ we have
$$
\begin{array}{rl}
\mathscr{H}[\dot{\textbf{r}},\textbf{r}] = \left[\dot{\textbf{r}}\cdot \mfrac{\p}{\p \dot{\textbf{r}}}-1\right]( \mathscr{L}[\dot{\textbf{r}},\textbf{r}])  =  \mfrac{1}{2} \dot{\textbf{r}}^\top \mathcal{M}\, \dot{\textbf{r}} + \mfrac{1}{2} (\mathcal{F} \textbf{r} )^\top \mathcal{K}\, (\mathcal{F} \textbf{r}) = \mathscr{T}[\dot{\textbf{r}}] + \mathscr{U}[\textbf{r}]
\end{array}
$$
then, by definition the momentum variable is given by $ p = \p \mathscr{L}/\p \dot{\textbf{r}} = \mathcal{M} \dot{\textbf{r}} $, which implies  $ \dot{\textbf{r}} = \mathcal{M}^{-1}\, p
$ and the Hamiltonian respect to $z = [p^\top \;\; \textbf{r}^\top]^\top$ is the total energy defined in \eqref{eq:HAMILTONIAN_Z}.
On the other hand, to proof \eqref{eq:var_Hr} first consider the following generic functionals 
$$
\begin{array}{rl}
U[\textbf{r}] =  \dint_\Omega g(\X,\textbf{r}(\X),\mathcal{F}\textbf{r}(\X)) \, d\X , \quad
U^\star[\textbf{r}^\star] =  U[\textbf{r}(\X) \!+\!\alpha\gamma(\X)] = \dint_\Omega g(\X,\textbf{r}^\star,(\mathcal{F}\textbf{r}^\star)) \, d\X
\end{array}
$$
where $\gamma(\X)$ is an arbitrary function that vanishes in $\p\Omega$, and $\alpha \in \R{}$ an scalar. By definition the first variation of $U$ is given by $\var U = \lim_{\alpha\to 0} dU^\star/d\alpha $, then we have
$$
\begin{array}{rl}
\var U = \displaystyle\lim_{\alpha \to 0} \mfrac{d}{d\alpha} \dint_\Omega \! g(\X,r^\star,(\mathcal{F}\textbf{r}^\star)) d\X = 
 \displaystyle\lim_{\alpha \to 0} \dint_\Omega \! \left( \mfrac{\p g}{\p \textbf{r}^\star} \!\cdot\! \mfrac{\p \textbf{r}^\star}{\p \alpha}  \!+\!  \mfrac{\p g}{\p (\mathcal{F}\textbf{r}^\star)} \!\cdot\! \mfrac{\p (\mathcal{F}\textbf{r}^\star)}{\p \alpha} \right) d\X
=  \displaystyle\lim_{\alpha \to 0} \dint_\Omega \! \left( \mfrac{\p g}{\p \textbf{r}^\star} \!\cdot\! \gamma  \!+\!  v \!\cdot\! \mathcal{F}\gamma \right) d\X 
\end{array}
$$
with $v = \frac{\p g}{\p (\mathcal{F}\textbf{r}^\star)}$, so applying Theorem \ref{theo:integration_by_parts_operator} to the last term above we have
$$
\begin{array}{rl}
\var U =  \displaystyle\lim_{\alpha \to 0} \left( \dint_\Omega \mfrac{\p g}{\p \textbf{r}^\star} \cdot \gamma \, d\X + \dint_\Omega \gamma^\top \mathcal{F}^*v \, d\X +  \int_{\p \Omega} \! \mathcal{B} (\gamma (\sx))^\top   \mathcal{Q}(\sx) \, \mathcal{B}( v(\sx)) \, d \mathtt{s} \right)
\end{array}
$$
where the last term is equal to zero because $\gamma = 0 $ in $\p \Omega$. Then,
$$
\begin{array}{rl}
\var U =  \displaystyle\lim_{\alpha \to 0} \dint_\Omega \gamma \cdot \left[ \mfrac{\p g}{\p \textbf{r}^\star} + \mathcal{F}^*\left( \mfrac{\p g}{\p (\mathcal{F}\textbf{r}^\star)}\right)  \right]\, d\X =  \dint_\Omega \var \textbf{r} \cdot \left[ \mfrac{\p g}{\p \textbf{r}} + \mathcal{F}^*\left( \mfrac{\p g}{\p (\mathcal{F}\textbf{r})}\right)  \right]  \, d\X = \dint_\Omega \var \textbf{r} \cdot \mfrac{\var U}{\var \textbf{r}}  \, d\X
\end{array}
$$
finally, from the last expression and considering $U[\textbf{r}]$ as in \eqref{eq:ann_definition_T_U}, we obtain that $\frac{\var U}{ \var \textbf{r}} = \frac{\var H}{ \var \textbf{r}}  = \mathcal{F}^*(\mathcal{K}\, \mathcal{F}\textbf{r})$.

\section{Other examples}\label{ann:examples}

The models presented here are: \textbf{D.1} One-dimensional elasticity (Truss bar), \textbf{D.2} Two-dimensional elasticity, \textbf{D.3} Three-dimensional elasticity, \textbf{D.4} Mindlin's plate, \textbf{D.5} Vibrating string, \textbf{D.6} Torsion in circular bars, \textbf{D.7} Reddy beam, \textbf{D.8} Rayleigh beam, \textbf{D.8$^*$} Euler-Bernoulli beam, \textbf{D.9} Kirchhoff-Rayleigh plate, and \textbf{D.9$^*$} Kirchhoff-Love plate.

\titleformat*{\subsection}{\normalfont\bfseries}
\numberwithin{equation}{subsection}
\numberwithin{figure}{subsection}

\subsection{One-dimensional elasticity (truss bar)}

\noindent Consider a three-dimensional body that can be treated as a one-dimensional structure as shown in Figure \ref{fig:papa_1D}. \noindent The displacement field is given by
\begin{equation}
\textbf{u}(\CX,t) = \ub{\begin{bmatrix}
1   \\[-2mm] 0   \\[-2mm] 0 
\end{bmatrix}}{\lambda_1(\X^c)} \ub{  u_1(\X,t) }{\textbf{r}(\X,t)} 
\end{equation}
with $\X = \lbrace \x{1} \rbrace $, $\X^c = \lbrace \x{2},\x{3} \rbrace$, $\Omega = (a,b) \subset \R{}$, $\Omega^c = A \subset \R{2}$, and $u_1(\X,t) \in \R{}$ the displacement in the direction of the $\x{1}$ axis. From \eqref{eq:def_Mass_matrix_distributed} and \eqref{eq:def_p_distributed} we have
\begin{equation}
\mathcal{M}(\X) = \rho(\X) \dint_{A}\!\! 1 \, dA = \rho(\X)A(\X)
\quad ,\quad 
p(\X,t) = \mathcal{M}(\X)\dot{\textbf{r}}(\X,t) = \rho(\X) A(\X)\dot{u}_1(\X,t)
\end{equation}
where $A(\X)$ is the area of the cross section. From \eqref{eq:def_vec_epsilon}, the non-zero components of the strain tensor $\varepsilon \subset \vec{\varepsilon}$ are given by
\begin{equation}
\varepsilon(\CX,t) = 
\varepsilon_1(\X,t)  = 
\p_1 u_1(\X,t) 
\end{equation}
From Proposition \ref{prop:2} we choose $m=1$, then 
\begin{equation}
\varepsilon(\CX,t) = \ub{ 1 }{\lambda_2(\X^c)} \ub{ \p_1 }{\mathcal{F}} \ub{ u_1(\X,t) }{\textbf{r}(\X,t)} \quad \to \quad \q(\X,t) = \mathcal{F}\,\textbf{r}(\X,t) = \varepsilon_1(\X,t)
\end{equation}
with $\mathcal{F}$ a differential operator of order $N=1$. The stress $\sigma(\CX,t)$ is obtained from $\sigma(\CX,t) = C \, \varepsilon(\CX,t) \in \R{}$, where the constitutive matrix is given by $C = E$ with $E$ the Young's modulus. Then, from \eqref{eq:def_Stiffness_matrix_distributed} we have 
$
\mathcal{K}(\X) =  E \int_{A}  1  \,dA =  EA(\X)  
$.
Considering that there are no distributed inputs, from Theorem \ref{theo:PHS_SD} we have
\begin{equation}
\begin{bmatrix}
\dot{p} \\[-1mm] \dot{\q} 
\end{bmatrix} = \begin{bmatrix}
0 & \p_1 \\[-1mm] \p_1 & 0
\end{bmatrix} \begin{bmatrix}
e_{p} \\[-1mm] e_{\q} 
\end{bmatrix} 
\end{equation}
with Hamiltonian defined as 
\begin{equation}
H[p,\q] = \mfrac{1}{2}\dint_{a}^{b} \mfrac{p^2}{\rho A} +  EA \q^2 \, d\X
\end{equation}

Since $\Omega$ is the segment $(a,b)$, the boundary points are $\sx = \lbrace a, b \rbrace$, then $\hat{n}_{1}(\sx = a)=-1$ and $\hat{n}_{1}(\sx = b)=1$. The boundary ports are defined from Remark \ref{rem:boundary_ports_N1} as
\begin{equation}
\p_t{H} =  \dint_{\p\Omega} \!\! e_p\, e_\q \,\hat{n}_{1}\, d\mathtt{s}
= ( e_p\, e_\q)|_{a}^b =  -e_p(a) e_\q(a)+ e_p(b)e_\q(b) = \dint_{\p \Omega} \! y_\p^\top u_\p \,d\sx
\end{equation}


\subsection{Two-dimensional elasticity}

Consider a three-dimensional body that can be treated as a two-dimensional structure as shown in Figure \ref{fig:papa_2D} where $h \in \R{}$ is the thickness (constant). The displacement field is given by
\begin{equation}
\textbf{u}(\CX,t) = \ub{\begin{bmatrix}
1 & 0  \\[-2mm] 0 & 1  \\[-2mm] 0 & 0 
\end{bmatrix}}{\lambda_1(\X^c)} \ub{\begin{bmatrix}
u_1(\X,t) \\[-2mm] u_2(\X,t)
\end{bmatrix}}{\textbf{r}(\X,t)} 
\end{equation}
with $\X = \lbrace \x{1}, \x{2} \rbrace $, $\X^c = \lbrace \x{3} \rbrace$, $\Omega \subset \R{2}$, $\Omega^c = (-\frac{h}{2}\times \frac{h}{2}) \subset \R{}$, $u_1(\X,t) \in \R{}$ and $u_2(\X,t) \in \R{}$ are the displacements in the direction of the $\x{1}$ and $\x{2}$ axes, respectively. From \eqref{eq:def_Mass_matrix_distributed} and \eqref{eq:def_p_distributed} we have
\begin{equation}
\mathcal{M}(\X) = \rho(\X) \dint_{-h/2}^{h/2}\!\! \begin{bmatrix}
1 & 0 \\[-2mm] 0 & 1
\end{bmatrix} d\x{3} = \rho(\X)h I_{2 \times 2}
\quad , \quad
p(\X,t) = \mathcal{M}(\X)\dot{\textbf{r}}(\X,t) = \begin{bmatrix}
\rho(\X)h\,\dot{u}_1(\X,t) \\[-1mm] \rho(\X)h\,\dot{u}_2(\X,t)
\end{bmatrix}
\end{equation}
From \eqref{eq:def_vec_epsilon}, the non-zero components of the strain tensor $\varepsilon \subset \vec{\varepsilon}$ are given by
\begin{equation}
{\varepsilon}(\CX,t)= \begin{bmatrix}
{\varepsilon}_1(\X,t) \\[-1mm] {\varepsilon}_2(\X,t) \\[-1mm] {\varepsilon}_4(\X,t) \end{bmatrix} = \begin{bmatrix}
\p_1 u_1(\X,t) \\[-1mm] \p_2 u_2(\X,t) \\[-1mm] \p_2 u_1(\X,t) + \p_1 u_2(\X,t)
\end{bmatrix}
\end{equation}
From Proposition \ref{prop:2} we choose $m=3$, then 
\begin{equation}
\varepsilon(\CX,t) = \ub{\begin{bmatrix}
1 & 0  & 0 \\[-2mm] 0 & 1 & 0 \\[-2mm] 0 & 0 & 1
\end{bmatrix}}{\lambda_2(\X^c)} \ub{\begin{bmatrix}
\p_1 & 0 \\[-2mm] 0 & \p_2 \\[-2mm] \p_2 & \p_1
\end{bmatrix}}{\mathcal{F}} \ub{\begin{bmatrix}
u_1(\X,t) \\[-1mm] u_2(\X,t)
\end{bmatrix}}{\textbf{r}(\X,t)} \quad \to \quad \q(\X,t) = \mathcal{F}\,\textbf{r}(\X,t) =  \begin{bmatrix}
{\varepsilon}_1(\X,t) \\[-1mm] {\varepsilon}_2(\X,t) \\[-1mm] {\varepsilon}_4(\X,t) \end{bmatrix}
\end{equation}
with $\mathcal{F}$ a differential operator of order $N=1$. The stress $\sigma(\CX,t)$ is obtained from $\sigma(\CX,t) = C_{2D} \, \varepsilon(\CX,t) \in \R{3}$, where  $C_{2D} \in \R{3 \times 3}$ represent a constitutive matrix according to plane stress hypothesis, or  plane strain hypothesis. Them, from \eqref{eq:def_Stiffness_matrix_distributed} we have $ \mathcal{K}(\X) = \int_{-h/2}^{h/2}  C_{2D}  \,d\x{3} = h\, C_{2D}  $. 
Considering that there are no distributed inputs, from Theorem \ref{theo:PHS_SD} we have
\begin{equation}
\begin{bmatrix}
\dot{p}_1 \\[-1.5mm] \dot{p}_2 \\[-1.5mm] \dot{\q}_1 \\[-1.5mm] \dot{\q}_2 \\[-1.5mm] \dot{\q}_3
\end{bmatrix} = \left[ \begin{array}{c c | c c c}
0 & 0 &  \p_1 & 0 & \p_2 \\[-1.5mm]
0 & 0 & 0 & \p_2 & \p_1 \\[0mm]
\hline \\[-6mm]
\p_1 & 0 & 0 & 0 & 0 \\[-1.5mm]
0 & \p_2 & 0 & 0 & 0 \\[-1.5mm]
\p_2 & \p_1 & 0 & 0 & 0
\end{array} \right] \begin{bmatrix}
e_{p_1} \\[-1.5mm] e_{p_2} \\[-1.5mm] e_{\q_1} \\[-1.5mm] e_{\q_2} \\[-1.5mm] e_{\q_3}
\end{bmatrix} 
\label{eq:2D_elast}
\end{equation}
with Hamiltonian defined as in \eqref{eq:def_HAMILTONIANO}. The boundary ports are defined from Remark \ref{rem:boundary_ports_N1} by
\begin{equation}
\begin{array}{rl}
\p_t{H}  =  
\dint_{\p\Omega} 
\begin{bmatrix}
\\[-7mm]
e_{p_1} \\[-2mm] e_{p_2}
\end{bmatrix}^\top 
\ub{\begin{bmatrix} 
\hat{n}_1 & 0 & \hat{n}_2 \\[-1.5mm]
0 & \hat{n}_2 & \hat{n}_1 
\end{bmatrix}}{P_\p}
\begin{bmatrix}
\\[-7mm]
e_{\q_1} \\[-2mm] e_{\q_2} \\[-2mm] e_{\q_3} 
\end{bmatrix} d\sx 
= \dint_{\p \Omega} \!  y_\p^\top u_\p  \, d\sx 
\end{array}
\label{eq:2D_elast_BC}
\end{equation}

$ $\\[-15mm]
\subsubsection{Tensor representation}

Noticing that $\F = \mbox{Grad}$, and $\F^* = -\mbox{Div}$, we redefine the following variables
$$
\underline{\q}(\X,t) = \mbox{Grad}(\textbf{r}(\X,t)) = \begin{bmatrix}
\q_1(\X,t) & \frac{1}{2}\q_3(\X,t) \\ \frac{1}{2}\q_3(\X,t) & \q_2(\X,t)
\end{bmatrix} 
, \quad 
\doubleunderline{K}(\X) = h \doubleunderline{C_{2D}}(\cdot) 
\to \quad
e_{\underline{\q}}(\X,t) = h \doubleunderline{C_{2D}}(\underline{\q}(\X,t)) = \begin{bmatrix}
e_{\q_1}(\X,t) & e_{\q_3}(\X,t) \\ e_{\q_3}(\X,t) & e_{\q_2}(\X,t)
\end{bmatrix} 
$$
with $\doubleunderline{C_{2D}}(\cdot) = \frac{E}{1-\nu^2}\left[ (1-\nu)(\cdot) + \nu \mbox{tr}(\cdot) 1_2 \right] $ for isotropic materials. With tha above, the model \eqref{eq:2D_elast} written using tensor notation is given by 
\begin{equation}
\begin{bmatrix}
\dot{p} \\[-1mm] \dot{\underline{\q}}
\end{bmatrix} = \begin{bmatrix}
0 & \Div \\[-1mm] \Grad & 0 
\end{bmatrix} \begin{bmatrix}
e_p \\[-1mm] e_{\underline{\q}}
\end{bmatrix}
\end{equation}
with Hamiltonian functional $H[p,\underline{\q}] $ given by 
\begin{equation}
H[p,\underline{\q}] = \mfrac{1}{2} \dint_{\Omega} \! \mfrac{1}{\rho h} \, p \cdot \! p + h \doubleunderline{C_{2D}}(\underline{\q}) \!:\! \underline{\q}  \, d\X
\end{equation}
\noindent The boundary variables are obtained from the energy balance which is given by
\begin{equation}
\begin{array}{rl}
\p_t H =  \langle \var_x H,\, \dot{x} \rangle_{in}^{\Omega}  = & \!\!
 \dint_{\Omega} e_{p}\!\cdot \mbox{Div}(e_{\underline{\q}}) + e_{\underline{\q}}\!:\mbox{Grad}(e_{p})  \,  d\X
\end{array}
\end{equation}
so applying the integration by parts theorem for symmetric tensors (see \cite[Theorem 8]{brugnoli2020port}),  we obtain
\begin{equation}
\begin{array}{r}
\p_t H =  \dint_{\p\Omega}  (e_{\underline{\q}}\, \hat{n})\!\cdot e_{p} \,  d\mathtt{s} = \dint_{\p\Omega} y_\p^\top u_\p \,d\mathtt{s}
\end{array}
\end{equation}
where $ \hat{n} = [\hat{n}_1 \;\; \hat{n}_2]^\top $. The above expression is completely analogous to the expression in \eqref{eq:2D_elast_BC}.


\subsection{Three-dimensional elasticity}

Consider a three-dimensional body as shown in Figure \ref{fig:papa_3D}. The displacement field is given by
\begin{equation}
\textbf{u}(\CX,t) = \ub{\begin{bmatrix}
1 & 0 & 0  \\[-2mm] 0 & 1  & 0 \\[-2mm] 0 & 0 & 1
\end{bmatrix}}{\lambda_1(\X^c)} \ub{\begin{bmatrix}
u_1(\X,t) \\[-2mm] u_2(\X,t) \\[-2mm] u_3(\X,t)
\end{bmatrix}}{\textbf{r}(\X,t)} 
\end{equation}
with $\X = \lbrace \x{1}, \x{2} , \x{3} \rbrace = \CX $, $\X^c = \lbrace \phi \rbrace$, $\Omega \subset \R{3}$, $u_1(\X,t) \in \R{}$, $u_2(\X,t) \in \R{}$, and $u_3(\X,t) \in \R{}$ are the displacements in the direction of the $\x{1}$, $\x{2}$ and $\x{3}$ axes, respectively. From \eqref{eq:def_Mass_matrix_distributed} and \eqref{eq:def_p_distributed} we have
\begin{equation}
\mathcal{M}(\X) = \rho(\X)I_{3 \times 3}
\quad , \quad
p(\X,t) = \mathcal{M}(\X)\dot{\textbf{r}}(\X,t) = \rho(\X) \dot{\textbf{u}}(\X,t)
\end{equation}
From \eqref{eq:def_vec_epsilon}, the non-zero components of the strain tensor $\varepsilon \subset \vec{\varepsilon}$ are given by ${\varepsilon}(\CX,t) = \vec{\varepsilon}(\CX,t) = \mathbb{L} \textbf{u} (\X,t) $. 
From Proposition \ref{prop:2} we choose $m=6$, then 
\begin{equation}
\varepsilon(\CX,t) = \ub{I_{3 \times 3}}{\lambda_2(\X^c)} \ub{ \mathbb{L}}{\mathcal{F}} \ub{\textbf{u}}{\textbf{r}(\X,t)} \quad \to \quad \q(\X,t) = \mathcal{F}\,\textbf{r}(\X,t) =  \vec{\varepsilon}(\X,t)
\end{equation}
with $\mathcal{F}$ a differential operator of order $N=1$. The stress $\sigma(\CX,t)$ is obtained from $\sigma(\CX,t) = C_{3D} \, \varepsilon(\CX,t) \in \R{6}$, where  $C_{3D} \in \R{6 \times 6}$ is the constitutive matrix for 3D elasticity. Then, from \eqref{eq:def_Stiffness_matrix_distributed} we have $ \mathcal{K}(\X) =  C_{3D}  $. 
Considering that there are no distributed inputs, from Theorem \ref{theo:PHS_SD} we have
\begin{equation}
\begin{bmatrix}
\dot{p} \\[-1mm] \dot{\q}
\end{bmatrix} = \begin{bmatrix}
0 & -\mathbb{L}^* \\[-1mm] \mathbb{L} & 0 
\end{bmatrix} \begin{bmatrix}
e_p \\[-1mm] e_\q
\end{bmatrix} 
\label{eq:3D_elast}
\end{equation}
with Hamiltonian defined as in \eqref{eq:def_HAMILTONIANO}. The boundary ports are defined from Remark \ref{rem:boundary_ports_N1} by
\begin{equation}
\begin{array}{rl}
\p_t{H}  =  
\dint_{\p\Omega} 
\begin{bmatrix}
\\[-7mm]
e_{p_1} \\[-2mm] e_{p_2} \\[-2mm] e_{p_2}
\end{bmatrix}^\top 
\ub{\begin{bmatrix} 
\hat{n}_1 & 0 & 0 & \hat{n}_2 & \hat{n}_3 & 0 \\[-1.5mm]
0 & \hat{n}_2 & 0 & \hat{n}_1 & 0 & \hat{n}_3 \\[-1.5mm]
0 & 0 & \hat{n}_3 & 0 & \hat{n}_1 & \hat{n}_2
\end{bmatrix}}{P_\p}
\begin{bmatrix}
\\[-7mm]
e_{\q_1} \\[-2mm] e_{\q_2} \\[-2mm] e_{\q_3} \\[-2mm] e_{\q_4} \\[-2mm] e_{\q_5} \\[-2mm] e_{\q_6} 
\end{bmatrix} d\sx 
= \dint_{\p \Omega} \!  y_\p^\top u_\p  \, d\sx 
\end{array}
\label{eq:3D_elast_BC}
\end{equation}

$ $\\[-15mm]
\subsubsection{Tensor representation}

Noticing that $\F = \mbox{Grad}$, and $\F^* = -\mbox{Div}$, we redefine the following variables
$$
\underline{\q}(\X,t) = \mbox{Grad}(\textbf{r}(\X,t)) = \begin{bmatrix}
\q_1(\X,t) &  \frac{1}{2}\q_4(\X,t) & \frac{1}{2}\q_5(\X,t) \\ \frac{1}{2}\q_4(\X,t) & \q_2(\X,t) & \frac{1}{2}\q_6(\X,t) \\ \frac{1}{2}\q_5(\X,t) & \frac{1}{2}\q_6(\X,t) & \q_3(\X,t)
\end{bmatrix} 
, \quad 
e_{\underline{\q}}(\X,t) = \doubleunderline{C_{3D}}(\underline{\q}(\X,t)) = \begin{bmatrix}
e_{\q_1}(\X,t) &  e_{\q_4}(\X,t) & e_{\q_5}(\X,t) \\ e_{\q_4}(\X,t) & e_{\q_2}(\X,t) & e_{\q_6}(\X,t) \\ e_{\q_5}(\X,t) & e_{\q_6}(\X,t) & e_{\q_3}(\X,t)
\end{bmatrix} 
$$
with $\doubleunderline{C_{3D}}(\cdot) = 2\mu(\cdot) + \lambda\mbox{tr}(\cdot)1_3  $ for isotropic materials. With tha above, the model \eqref{eq:3D_elast} written using tensor notation is given by 
\begin{equation}
\begin{bmatrix}
\dot{p} \\[-1mm] \dot{\underline{\q}}
\end{bmatrix} = \begin{bmatrix}
0 & \Div \\[-1mm] \Grad & 0 
\end{bmatrix} \begin{bmatrix}
e_p \\[-1mm] e_{\underline{\q}}
\end{bmatrix}
\end{equation}
with Hamiltonian functional $H[p,\underline{\q}] $ given by 
\begin{equation}
H[p,\underline{\q}] = \mfrac{1}{2} \dint_{\Omega} \!  \mfrac{1}{\rho}\, p \cdot \! p + \doubleunderline{C_{3D}}(\underline{\q}) \!:\! \underline{\q}  \, d\X
\end{equation}
\noindent The boundary variables are obtained from the energy balance which is given by
\begin{equation}
\begin{array}{rl}
\p_t H =  \langle \var_x H,\, \dot{x} \rangle_{in}^{\Omega}  = & \!\!
 \dint_{\Omega} e_{p}\!\cdot \mbox{Div}(e_{\underline{\q}}) + e_{\underline{\q}}\!:\mbox{Grad}(e_{p})  \,  d\X
\end{array}
\end{equation}
so applying the integration by parts theorem for symmetric tensors (see \cite[Theorem 8]{brugnoli2020port}),  we obtain
\begin{equation}
\begin{array}{r}
\p_t H =  \dint_{\p\Omega}  (e_{\underline{\q}}\, \hat{n})\!\cdot e_{p} \,  d\mathtt{s} = \dint_{\p\Omega} y_\p^\top u_\p \,d\mathtt{s}
\end{array}
\end{equation}
where $ \hat{n} = [\hat{n}_1 \;\; \hat{n}_2 \;\; \hat{n}_3]^\top $. The above expression is completely analogous to the expression in \eqref{eq:3D_elast_BC}.


\subsection{Mindlin's plate}

Consider a three-dimensional body that can be treated as a two-dimensional structure as shown in Figure \ref{fig:papa_2D} where $h \in \R{}$ is the thickness (constant). This model is based on the first order shear deformation theory, that is, plane sections normal to the neutral line before deformation remain plane but not necessarily normal to the neutral line after deformation.Then, the displacement field is given by
\begin{equation}
\textbf{u}(\CX,t) = \ub{\begin{bmatrix}
-\x{3} & 0 & 0  \\[-2mm] 0 & -\x{3} & 0 \\[-2mm] 0 & 0 & 1 
\end{bmatrix}}{\lambda_1(\X^c)} \ub{\begin{bmatrix}
\psi_1(\X,t) \\[-2mm] \psi_2(\X,t) \\[-2mm] w(\X,t)
\end{bmatrix}}{\textbf{r}(\X,t)} 
\end{equation}
with $\X = \lbrace \x{1}, \x{2} \rbrace $, $\X^c = \lbrace \x{3} \rbrace$, $\Omega \subset \R{2}$, $\Omega^c = (-\frac{h}{2}\times \frac{h}{2}) \subset \R{}$, $\psi_1(\X,t) \in \R{}$ and $\psi_2(\X,t) \in \R{}$ are the angles rotated by the cross sections, and $w(\X,t) \in \R{}$ is the vertical displacement in the direction of the axis $\x{3}$. From \eqref{eq:def_Mass_matrix_distributed} and \eqref{eq:def_p_distributed} we have
\begin{equation}
\begin{array}{r}
\hspace*{-10mm}\mathcal{M}(\X) =  \rho(\X)\dint_{-h/2}^{h/2}\!\! \begin{bmatrix}
\x{3}^2  & 0 & 0  \\[-1mm]
0 & \x{3}^2 & 0  \\[-1mm]
0 & 0 &  1 
\end{bmatrix} d\x{3} =  \rho(\X)\begin{bmatrix}
\,\bar{I}_2  & 0 &  0 \\[-1mm]
0 & \bar{I}_2 & 0 \\[-1mm]
0 &  0 & \bar{I}_0
\end{bmatrix}
, \quad
p(\X,t) = \mathcal{M}(\X)\dot{\textbf{r}}(\X,t) = 
\begin{bmatrix} 
\rho(\X)\bar{I}_2 \dot{\psi}_1(\X,t) \\[-1mm]
\rho(\X)\bar{I}_2 \dot{\psi}_2(\X,t) \\[-1mm]
\rho(\X) \bar{I}_0 \dot{w}(\X,t) 
\end{bmatrix} 
\end{array}
\end{equation}
where $\bar{I}_i \in \R{}$ with $i=0,2\dots$ is defined as 
$
\bar{I}_i = \int_{-h/2}^{h/2} \x{3}^i \, d\x{3} = \mfrac{h^{i+1}}{2^i(i+1)}
$. %
From \eqref{eq:def_vec_epsilon}, the non-zero components of the strain tensor $\varepsilon \subset \vec{\varepsilon}$ are given by
\begin{equation}
\hspace{-2mm}\varepsilon(\CX,t) \!= \! \begin{bmatrix}
\varepsilon_b \\ \hline \\[-6mm] \varepsilon_s
\end{bmatrix} \! = \! \begin{bmatrix}
\varepsilon_1 \\[-1mm] \varepsilon_2 \\[-1mm] \varepsilon_4 \\ \hline\\[-6mm] \varepsilon_5 \\[-1mm] \varepsilon_6
\end{bmatrix} \! = \! \begin{bmatrix}
-\x{3} [ \p_1 \psi_1(\X,t)] \\[-1mm]
-\x{3} [ \p_2 \psi_2(\X,t)] \\[-1mm] 
-\x{3} [\p_2 \psi_1(\X,t) + \p_1 \psi_2(\X,t)] \\[0mm] \hline \\[-5.5mm]
[\p_1 w(\X,t) - \psi_1(\X,t)] \\[-1mm]
[\p_2 w(\X,t) - \psi_2(\X,t)]
\end{bmatrix}
\label{eq:eps_highlight_Mindlin}
\end{equation}
\noindent From Proposition \ref{prop:2} we choose $m=5$ since there are five functions that are independent of $\X^c$ in the strain vector $\varepsilon(\CX,t)$ (highlighted in square brackets in \eqref{eq:eps_highlight_Mindlin}). Then we have
\begin{equation}
\hspace{-3mm}\varepsilon(\CX,t) = \ub{\begin{bmatrix}
-\x{3} & 0  & 0 & 0 & 0  \\[-2mm]
 0 & -\x{3} & 0 & 0 & 0  \\[-2mm]
 0 & 0 & -\x{3} & 0 & 0 \\[-2mm]
 0 & 0 & 0 & 1  & 0 \\[-2mm]
 0 & 0 & 0 & 0  & 1
\end{bmatrix}
}{\lambda_2(\X^c)}
\ub{ 
\left[ \!\! \begin{array}{c c | c }
\p_1 & 0 & 0 \\[-2mm] 0 & \p_2 & 0  \\[-2mm] \p_2 & \p_1 & 0  \\ \hline \\[-6mm] -1 & 0 & \p_1  \\[-2mm] 0 & -1 & \p_2
\end{array} \!\! \right]
}{\F}
\ub{
\begin{bmatrix}
\psi_1(\X,t) \\[-2mm]
\psi_2(\X,t) \\[-2mm]
w(\X,t) 
\end{bmatrix}
}{\textbf{r}(\X,t)}
\quad \to \quad
\q(\X,t) = \mathcal{F}\,\textbf{r}(\X,t) = \begin{bmatrix}
\p_1 \psi_1(\X,t) \\[-1mm] \p_2 \psi_2(\X,t) \\[-1mm] \p_2 \psi_1(\X,t) + \p_1 \psi_2(\X,t) \\[-1mm] \p_1 w(\X,t)-\psi_1(\X,t) \\[-1mm] \p_2 w(\X,t)-\psi_2(\X,t) 
\end{bmatrix}
\end{equation}
with $\mathcal{F}$ a differential operator of order $N=1$. The stress $\sigma(\CX,t)$ is obtained from Hooke's law $\sigma(\CX,t) = C \, \varepsilon(\CX,t)$, where the constitutive matrix $C = C^\top > 0 $ is given by
\begin{equation}
C = \begin{bmatrix}
C_b & 0 \\[-1.5mm] 0 & C_s 
\end{bmatrix}  \quad , \quad \mbox{with } \quad C_b = \mfrac{E}{(1-\nu^2)} \begin{bmatrix}
1 & \nu & 0 \\[-1.5mm] \nu & 1 & 0 \\[-1.5mm] 0 & 0 & \frac{(1-\nu)}{2}
\end{bmatrix}  \quad , \quad   C_s = \begin{bmatrix}
G & 0 \\[-1.5mm] 0 & G
\end{bmatrix}
\label{eq:C_mindlin}
\end{equation}
where $C_b$ is the constitutive matrix for plane stress, and $E,G,\nu$ are material properties. Then, 
from \eqref{eq:def_Stiffness_matrix_distributed} we have 
$
\begin{array}{r}
\mathcal{K}(\X) =  \dint_{-h/2}^{h/2}  \begin{bmatrix}
\x{3}^2 \, C_b & 0 \\[-1mm]
0 & 1 \,C_s  
\end{bmatrix}  d\x{3} =    \begin{bmatrix}
\bar{I}_2 \,C_b & 0  \\[-1mm]
0 & \bar{I}_0\,C_s
\end{bmatrix}
\end{array}
$. Considering that there are no distributed inputs, from Theorem \ref{theo:PHS_SD} we have
\begin{equation}
 \begin{bmatrix}
\dot{p}_1 \\[-2mm] \dot{p}_2 \\[-2mm] \dot{p}_3 \\[-2mm] \dot{\q}_1 \\[-2mm] \dot{\q}_2 \\[-2mm] \dot{\q}_3 \\[-2mm] \dot{\q}_4 \\[-2mm] \dot{\q}_5 
\end{bmatrix}  = 
\left[ \!\! \begin{array}{c c c | c c c c c }
0 & 0 & 0 &  \p_1 & 0 & \p_2 & 1 & 0 \\[-2mm]
0 & 0 & 0 &  0 & \p_2 & \p_1  & 0  & 1  \\[-2mm]
0 & 0 & 0 &  0 & 0 & 0 & \p_1 & \p_2 \\[-0.5mm]
\hline \\[-6.5mm]
\p_1 & 0 & 0 & 0 & 0 & 0 & 0 & 0 \\[-2mm]
0 & \p_2 & 0 & 0 & 0 & 0 & 0 & 0 \\[-2mm]
\p_2 & \p_1  & 0 & 0 & 0 & 0 & 0 & 0 \\[-2mm]
-1 & 0 & \p_1 & 0 & 0 & 0 & 0 & 0 \\[-2mm]
0 & -1 & \p_2 & 0 & 0 & 0 & 0 & 0 
\end{array} \!\! \right]
\begin{bmatrix}
e_{p_1} \\[-2mm] e_{p_2} \\[-2mm] e_{p_3} \\[-2mm] e_{\q_1} \\[-2mm] e_{\q_2} \\[-2mm] e_{\q_3} \\[-2mm] e_{\q_4} \\[-2mm] e_{\q_5}
\end{bmatrix}
\label{eq:Mindlin_plate_model_Voigt}
\end{equation}
with Hamiltonian defined as in \eqref{eq:def_HAMILTONIANO}. The boundary ports are defined from Remark \ref{rem:boundary_ports_N1} by
\begin{equation}
\begin{array}{rl}
\p_t{H}  =  
\dint_{\p\Omega} 
\begin{bmatrix}
\\[-7mm]
e_{p_1} \\[-2mm] e_{p_2}  \\[-2mm] e_{p_3}
\end{bmatrix}^\top 
\ub{\begin{bmatrix} 
\hat{n}_1 & 0 & \hat{n}_2 & 0 & 0\\[-2mm]
0 & \hat{n}_2 & \hat{n}_1 & 0 & 0\\[-2mm]
0 & 0 & 0 & \hat{n}_1 & \hat{n}_2
\end{bmatrix}}{P_\p}
\begin{bmatrix}
\\[-7mm]
e_{\q_1} \\[-2mm] e_{\q_2} \\[-2mm] e_{\q_3} \\[-2mm] e_{\q_4} \\[-2mm] e_{\q_5} 
\end{bmatrix} d\sx 
= \dint_{\p \Omega} \!  y_\p^\top u_\p  \, d\sx 
\end{array}
\label{eq:Mindlin_plate_BC}
\end{equation}

$ $\\[-15mm]
\subsubsection{Tensor representation}

\noindent Notice that  $ \mathcal{F} = \begin{bsmallmatrix} \mbox{Grad} & 0  \\[0mm]
-1_2 & \mbox{grad} \end{bsmallmatrix} $, $ \mathcal{F}^* = -\begin{bsmallmatrix}
\mbox{Div} & 1_2  \\[0mm] 0 & \mbox{div} \end{bsmallmatrix}$, $ \textbf{r}(\X,t) = \begin{bsmallmatrix}
\psi(\X,t)  \\[0mm] w(\X,t) \end{bsmallmatrix} $, 
where $\psi(\X,t) = [\psi_1(\X,t) \;\; \psi_2(\X,t)]^\top \in \R{2}$ groups the angles rotated by the cross section with respect to each coordinate axis. The mass matrix $\mathcal{M}(\X)$ and stiffness matrix  $ {\mathcal{K}}(\X)$ can be rewritten as
\begin{equation}
\mathcal{M}(\X) = \rho(\X)\begin{bmatrix}
\bar{I}_2 1_2 & 0  \\[-1mm] 0 & \bar{I}_0 
\end{bmatrix}
\quad , \quad
\doubleunderline{\mathcal{K}}(\X) = \begin{bmatrix}
\bar{I}_2 \doubleunderline{C_{b}}(\cdot) & 0  \\[-1mm] 0 & \bar{I}_0 C_s 
\end{bmatrix}
\end{equation}
with $C_s$ the constitutive matrix for shear stress as defined in \eqref{eq:C_mindlin}, and $\doubleunderline{C_b}(\cdot)= \frac{E}{1-\nu^2}\left[(1-\nu)(\cdot) + \nu \mbox{tr}(\cdot)1_2 \right]$ the constitutive tensor for plane stress. With the above we redefine the energy variables as
\begin{equation}
\hspace{-0mm} p(\X,t) \! = \! \begin{bmatrix}
p_{\psi}(\X,t) \\[-1mm] p_{w}(\X,t) 
\end{bmatrix} \! = \! \begin{bmatrix}
\rho(\X) \bar{I}_2 \dot{\psi}(\X,t) \\[-1mm] \rho(\X)\bar{I}_0 \dot{w}(\X,t) 
\end{bmatrix}  , \;\;
\underline{\q}(\X,t) \! = \! \begin{bmatrix}
\q_{\psi}(\X,t) \\[-1mm] \q_{w}(\X,t) 
\end{bmatrix} \! =  \! \begin{bmatrix}
\mbox{Grad}(\psi(\X,t)) \\[-1mm] \mbox{grad}(w(\X,t))-\psi(\X,t)
\end{bmatrix}
\end{equation}
and co-energy variables as  
\begin{equation}
{e_p}(\X,t) = \begin{bmatrix}
e_{p_\psi}(\X,t) \\[-1mm] e_{p_w}(\X,t)  
\end{bmatrix} = \begin{bmatrix}
\dot{\psi}(\X,t) \\[-1mm] \dot{w}(\X,t) 
\end{bmatrix}
\quad , \quad
\underline{e_\q}(\X,t) = 
\begin{bmatrix}
e_{\q_\psi}(\X,t) \\[-1mm] e_{\q_w}(\X,t) 
\end{bmatrix} =
\begin{bmatrix}
\bar{I}_2 \doubleunderline{C_{b}}(\q_\psi(\X,t))  \\[-1mm]
\bar{I}_0 C_s \, \q_w(\X,t) 
\end{bmatrix}
\end{equation}
$ $\\[-5mm]
where 
$$
\begin{array}{rrrr}
\q_{\psi}(\X,t) = \begin{bmatrix}
\q_1(\X,t) & \frac{1}{2}\q_3(\X,t) \\[0mm] \frac{1}{2}\q_3(\X,t) & \q_2(\X,t) 
\end{bmatrix} , &
\q_{w}(\X,t) = \begin{bmatrix}
\q_4(\X,t)  \\  \q_5(\X,t) 
\end{bmatrix} , &
e_{\q_{\psi}}(\X,t) = \begin{bmatrix}
e_{\q_1}(\X,t) & e_{\q_3}(\X,t) \\[0mm] e_{\q_3}(\X,t) & e_{\q_2}(\X,t) 
\end{bmatrix} , &
e_{\q_{w}}(\X,t) = \begin{bmatrix}
e_{\q_4}(\X,t)  \\  e_{\q_5}(\X,t) 
\end{bmatrix}
\end{array}
$$
Then, the model in \eqref{eq:Mindlin_plate_model_Voigt} written using tensor notation is given by
\begin{equation}
\begin{array}{r}
\ub{\begin{bmatrix}
\dot{p}_{\psi} \\[-1mm] \dot{p}_{w} \\[0.5mm]
\hline \\[-6mm]
\dot{\q}_{\psi} \\[-1mm] \dot{\q}_{w}
\end{bmatrix}}{\dot{x}} =   \ub{\left[ \!\begin{array}{c c | c c}
\!0 & \!\!\!0  & \mbox{Div} \!\! & 1_2 \!\!\!\!  \\[-1mm]
\!0 & \!\!\!0  & 0 & \mbox{div} \!\!  \\[-0.5mm]
\hline \\[-6mm]
\! \mbox{Grad}  & \!\!\! 0 &  0 & 0  \\[-1mm]
\!\!\!\! -1_2  &  \!\!\!\! \mbox{grad} \! & 0 & 0 
\end{array} \! \right]}{\mathcal{J}=-\mathcal{J}^*} \ub{\begin{bmatrix}
{e}_{p_\psi} \\[-1mm] {e}_{p_w} \\[0.5mm]
\hline \\[-6mm]
e_{\q_{\psi}} \\[-1mm] e_{\q_{w}} 
\end{bmatrix}}{\var_x H} 
\end{array}
\end{equation}
with Hamiltonian functional $H[p,\underline{\q}]$ given by 
\begin{equation}
H[p,\underline{\q}] = \mfrac{1}{2} \dint_{\Omega} \!  \mfrac{1}{\rho \bar{I}_2} \, p_\psi \cdot \! p_\psi + \mfrac{p_w ^2}{\rho \bar{I}_0}  + \bar{I}_2 \doubleunderline{C_b}(\q_{\psi}) \!:\!\q_{\psi} + \bar{I}_0 C_s\q_w \!\cdot \! \q_w \, d\X
\end{equation}
\noindent The boundary variables are obtained from the energy balance which is given by
\begin{equation}
\begin{array}{rl}
\p_t H =  \langle \var_x H,\, \dot{x} \rangle_{in}^{\Omega}  = & 
 \dint_{\Omega}  [e_{p_\psi}\!\cdot \mbox{Div}(e_{\q_\psi}) + e_{\q_\psi}\!:\mbox{Grad}(e_{p_\psi})] + [e_{p_w}\, \mbox{div}(e_{\q_w}) + e_{\q_w}\!\cdot \mbox{grad}(e_{p_w})] \,  d\X
\end{array}
\end{equation}
so applying the integration by parts theorem for symmetric tensors to the first term of the integral above (see \cite[Theorem 8]{brugnoli2020port}), and the divergence theorem to the second term, then we obtain
\begin{equation}
\begin{array}{r}
\p_t H =  \dint_{\p\Omega} \left[ (e_{\q_\psi}\, \hat{n})\!\cdot e_{p_\psi}  + e_{p_w}\,(e_{\q_w}\!\cdot \hat{n}) \right] d\mathtt{s} = \dint_{\p\Omega} y_\p^\top u_\p \,d\mathtt{s}
\end{array}
\end{equation}
where $ \hat{n} = [\hat{n}_1 \;\; \hat{n}_2]^\top $. The above expression is completely  analogous to  \eqref{eq:Mindlin_plate_BC}. For more details see \cite{macchelli2005port} and \cite{brugnoli2019port}.

$ $\\[-11.5mm]
\subsection{Vibrating string}

\noindent Consider a three-dimensional body that can be treated as a one-dimensional structure as shown in Figure \ref{fig:papa_1D}. \noindent The displacement field is given by
\begin{equation}
\textbf{u}(\CX,t) = \ub{\begin{bmatrix}
0   \\[-2.5mm] 0   \\[-2.5mm] 1 
\end{bmatrix}}{\lambda_1(\X^c)} \ub{ w(\X,t) }{\textbf{r}(\X,t)} 
\end{equation}
with $\X = \lbrace \x{1} \rbrace $, $\X^c = \lbrace \x{2},\x{3} \rbrace$, $\Omega = (a,b) \subset \R{}$, $\Omega^c = A \subset \R{2}$, and $w(\X,t) \in \R{}$ is the vertical displacement in the direction of the $\x{3}$ axis. From \eqref{eq:def_Mass_matrix_distributed} and \eqref{eq:def_p_distributed} we have
\begin{equation}
\mathcal{M}(\X) = \rho(\X) \dint_{A}\!\! 1 \, dA = \rho(\X)A(\X)
\quad ,\quad 
p(\X,t) = \mathcal{M}(\X)\dot{\textbf{r}}(\X,t) = \rho(\X) A(\X)\dot{w}(\X,t)
\end{equation}
where $A(\X)$ is the cross section area. From \eqref{eq:def_vec_epsilon}, the non-zero components of the strain tensor $\varepsilon \subset \vec{\varepsilon}$ are given by
\begin{equation}
\varepsilon(\CX,t) = 
\varepsilon_5(\X,t)  = 
\p_1 w(\X,t) 
\end{equation}

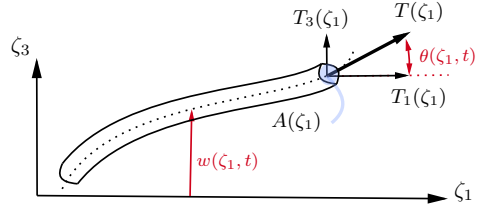
\begin{figure}[t]
\begin{center}
	\tikzset{every picture/.style={line width=0.75pt}} 

\scalebox{0.85}{
\begin{tikzpicture}[x=0.75pt,y=0.75pt,yscale=-1,xscale=1]

\draw    (96,281.25) -- (95.76,201.5) ;
\draw [shift={(95.75,199.5)}, rotate = 89.82] [fill={rgb, 255:red, 0; green, 0; blue, 0 }  ][line width=0.08]  [draw opacity=0] (12,-3) -- (0,0) -- (12,3) -- cycle    ;
\draw    (96,281.25) -- (337.75,283.08) ;
\draw [shift={(339.75,283.1)}, rotate = 180.43] [fill={rgb, 255:red, 0; green, 0; blue, 0 }  ][line width=0.08]  [draw opacity=0] (12,-3) -- (0,0) -- (12,3) -- cycle    ;
\draw    (111,262.25) .. controls (147.88,215.25) and (240.75,218.93) .. (264.75,202.93) ;
\draw [color={rgb, 255:red, 208; green, 2; blue, 27 }  ,draw opacity=1 ]   (187.71,231.93) -- (186.75,281.5) ;
\draw [shift={(187.75,229.93)}, rotate = 91.11] [fill={rgb, 255:red, 208; green, 2; blue, 27 }  ,fill opacity=1 ][line width=0.08]  [draw opacity=0] (7.2,-1.8) -- (0,0) -- (7.2,1.8) -- cycle    ;
\draw  [dash pattern={on 0.84pt off 2.51pt}]  (110.88,277.25) .. controls (142.75,213.93) and (276.75,229.93) .. (283.75,194.93) ;
\draw    (111,262.25) .. controls (107.88,269.25) and (108.88,273.25) .. (120,274.25) ;
\draw    (264.75,202.93) .. controls (261.63,209.93) and (262.63,213.93) .. (273.75,214.93) ;
\draw    (264.75,202.93) .. controls (274.63,203.93) and (276.63,205.93) .. (273.75,214.93) ;
\draw [color={rgb, 255:red, 0; green, 0; blue, 0 }  ,draw opacity=1 ][line width=1.5]    (314.22,185.81) -- (267.95,210.35) ;
\draw [shift={(317.75,183.93)}, rotate = 152.06] [fill={rgb, 255:red, 0; green, 0; blue, 0 }  ,fill opacity=1 ][line width=0.08]  [draw opacity=0] (10.92,-2.73) -- (0,0) -- (10.92,2.73) -- cycle    ;
\draw [color={rgb, 255:red, 208; green, 2; blue, 27 }  ,draw opacity=1 ] [dash pattern={on 0.84pt off 2.51pt}]  (267.95,210.35) -- (340.17,209.68) ;
\draw [color={rgb, 255:red, 208; green, 2; blue, 27 }  ,draw opacity=1 ]   (315.25,190.04) .. controls (315.77,191.96) and (316.22,193.59) .. (316.57,197.77) .. controls (317,202.76) and (316.9,201.56) .. (316.77,207.95) ;
\draw [shift={(316.74,209.93)}, rotate = 270.96] [fill={rgb, 255:red, 208; green, 2; blue, 27 }  ,fill opacity=1 ][line width=0.08]  [draw opacity=0] (7.2,-1.8) -- (0,0) -- (7.2,1.8) -- cycle    ;
\draw [shift={(314.75,188.1)}, rotate = 76.54] [fill={rgb, 255:red, 208; green, 2; blue, 27 }  ,fill opacity=1 ][line width=0.08]  [draw opacity=0] (7.2,-1.8) -- (0,0) -- (7.2,1.8) -- cycle    ;
\draw    (120,274.25) .. controls (156.88,227.25) and (249.75,230.93) .. (273.75,214.93) ;
\draw [color={rgb, 255:red, 0; green, 0; blue, 0 }  ,draw opacity=1 ][line width=0.75]    (314.75,209.95) -- (267.95,210.35) ;
\draw [shift={(316.75,209.93)}, rotate = 179.51] [fill={rgb, 255:red, 0; green, 0; blue, 0 }  ,fill opacity=1 ][line width=0.08]  [draw opacity=0] (8.4,-2.1) -- (0,0) -- (8.4,2.1) -- cycle    ;
\draw [color={rgb, 255:red, 0; green, 0; blue, 0 }  ,draw opacity=1 ][line width=0.75]    (267.77,186.93) -- (267.95,210.35) ;
\draw [shift={(267.75,184.93)}, rotate = 89.55] [fill={rgb, 255:red, 0; green, 0; blue, 0 }  ,fill opacity=1 ][line width=0.08]  [draw opacity=0] (8.4,-2.1) -- (0,0) -- (8.4,2.1) -- cycle    ;
\draw  [draw opacity=0][fill={rgb, 255:red, 119; green, 155; blue, 255 }  ,fill opacity=0.34 ] (263.6,209.35) .. controls (263.6,205.84) and (266,203) .. (268.95,203) .. controls (271.9,203) and (274.3,205.84) .. (274.3,209.35) .. controls (274.3,212.86) and (271.9,215.7) .. (268.95,215.7) .. controls (266,215.7) and (263.6,212.86) .. (263.6,209.35) -- cycle ;
\draw [color={rgb, 255:red, 119; green, 155; blue, 255 }  ,draw opacity=0.34 ][line width=1.5]    (270.75,238.1) .. controls (281.75,229.1) and (277.75,220.1) .. (267.95,210.35) ;

\draw (342,271.25) node [anchor=north west][inner sep=0.75pt]   [align=left] {$\displaystyle \zeta _{1}$};
\draw (79,185.25) node [anchor=north west][inner sep=0.75pt]   [align=left] {$\displaystyle \zeta _{3}$};
\draw (189.13,254.47) node [anchor=north west][inner sep=0.75pt]  [font=\scriptsize,color={rgb, 255:red, 208; green, 2; blue, 27 }  ,opacity=1 ,rotate=-359.38]  {$w( \zeta _{1} ,t)$};
\draw (233.5,229.8) node [anchor=north west][inner sep=0.75pt]  [font=\footnotesize]  {$A( \zeta _{1})$};
\draw (321.27,192.17) node [anchor=north west][inner sep=0.75pt]  [font=\scriptsize,color={rgb, 255:red, 208; green, 2; blue, 27 }  ,opacity=1 ,rotate=-359.38]  {$\theta ( \zeta _{1} ,t)$};
\draw (306.5,164.8) node [anchor=north west][inner sep=0.75pt]  [font=\footnotesize]  {$T( \zeta _{1})$};
\draw (304.5,215.8) node [anchor=north west][inner sep=0.75pt]  [font=\footnotesize]  {$T_{1}( \zeta _{1})$};
\draw (246.5,168.8) node [anchor=north west][inner sep=0.75pt]  [font=\footnotesize]  {$T_{3}( \zeta _{1})$};

\end{tikzpicture}
}
\end{center}
\vspace{-10mm}\caption{Vibrating string scheme.}
\label{fig:1D_string}
\end{figure}

From Proposition \ref{prop:2} we choose $m=1$, then 
\begin{equation}
\varepsilon(\CX,t) = \ub{ 1 }{\lambda_2(\X^c)} \ub{ \p_1 }{\mathcal{F}} \ub{ w(\X,t) }{\textbf{r}(\X,t)} \quad \to \quad \q(\X,t) = \mathcal{F}\,\textbf{r}(\X,t) = \varepsilon_5(\X,t)
\end{equation}
with $\mathcal{F}$ a differential operator of order $N=1$. The stress $\sigma(\CX,t)$ is obtained from $\sigma(\CX,t) = C \, \varepsilon(\CX,t) \in \R{}$, where $C$ is chosen as $C = T(\X)/A(\X)$, with $T(\X)$ the internal tension in the string. To understand this choice of $C$, first notice that $\sigma(\CX,t) = \sigma_5(\X,t) = \sigma_{13}(\X,t)$, which represents the shear stress on the face perpendicular to the $\x{1}$ axis and in the direction of the axis $ \x{3}$. The way to calculate this stress is by $\sigma_{13}(\X,t) = T_3(\X)/A(\X)$, with $T_3(\X)$ the projection of $T(\X)$ on the $\x{3}$ axis. From the geometry we know that $T_3(\X) = T(\X) sin(\theta(\X,t))$, where $\theta(\X,t)$ is the angle shown in Figure \ref{fig:1D_string}. Using the assumption of small displacements and rotations, $\theta(\X,t) \approx \p_1 w(\X,t)$ and  $T_3(\X) \approx T(\X) \theta(\X,t) = T(\X) \p_1 w(\X,t)$. With all the above we have
$$
\sigma(\X,t) = \sigma_{13}(\X,t) = T_3(\X)/A(\X) = \mfrac{T(\X)}{A(\X)} \p_1 w(\X,t) = \mfrac{T(\X)}{A(\X)} \varepsilon_{13}(\X,t) = C \, \varepsilon(\X,t)
$$ 
then by association the constitutive relation is equivalent to $C = T(\X)/A(\X)$. Then, from \eqref{eq:def_Stiffness_matrix_distributed} we have 
$
\mathcal{K}(\X) =  \frac{T(\X)}{A(\X)} \int_{A}  1  \,d\x{2}\x{3} =  T(\X)  
$.
Considering that there are no distributed inputs, from Theorem \ref{theo:PHS_SD} we have
\begin{equation}
\begin{bmatrix}
\dot{p} \\[-1mm] \dot{\q} 
\end{bmatrix} = \begin{bmatrix}
0 & \p_1 \\[-1mm] \p_1 & 0
\end{bmatrix} \begin{bmatrix}
e_{p} \\[-1mm] e_{\q} 
\end{bmatrix} 
\end{equation}
with Hamiltonian defined as 
\begin{equation}
H[p,\q] = \mfrac{1}{2}\dint_{a}^{b} \mfrac{p^2}{\rho A} +  T \q^2 \, d\X
\end{equation}

Since $\Omega$ is the segment $(a,b)$, the boundary points are $\sx = \lbrace a, b \rbrace$, then $\hat{n}_{1}(\sx = a)=-1$ and $\hat{n}_{1}(\sx = b)=1$. The boundary ports are defined from Remark \ref{rem:boundary_ports_N1} as
\begin{equation}
\p_t{H} =  \dint_{\p\Omega} \!\! e_p\, e_\q \,\hat{n}_{1}\, d\mathtt{s}
=  -e_p(a) e_\q(a)+ e_p(b)e_\q(b) = \dint_{\p \Omega} \! y_\p^\top u_\p \,d\sx
\end{equation}


\subsection{Torsion in circular bars}

\noindent Consider a three-dimensional body that can be treated as a one-dimensional structure as shown in Figure \ref{fig:papa_1D}. \noindent The displacement field is given by
\begin{equation}
\textbf{u}(\CX,t) = \ub{\begin{bmatrix}
0   \\[-2mm] \;\;\x{3}   \\[-2mm] -\x{2} 
\end{bmatrix}}{\lambda_1(\X^c)} \ub{ \theta(\X,t) }{\textbf{r}(\X,t)} 
\end{equation}
with $\X = \lbrace \x{1} \rbrace $, $\X^c = \lbrace \x{2},\x{3} \rbrace$, $\Omega = (a,b) \subset \R{}$, $\Omega^c = A \subset \R{2}$, and $\theta(\X,t) \in \R{}$ is the angle rotated by the cross section in the direction of the $\x{1}$ axis. From \eqref{eq:def_Mass_matrix_distributed} and \eqref{eq:def_p_distributed} we have
\begin{equation}
\mathcal{M}(\X) = \rho(\X) \dint_{A}\!\! (\x{2}^2 + \x{3}^2) \, dA = \rho(\X)I_p(\X)
\quad ,\quad 
p(\X,t) = \mathcal{M}(\X)\dot{\textbf{r}}(\X,t) = \rho(\X) I_p(\X)\dot{\theta}(\X,t)
\end{equation}
where $I_p(\X)$ is the polar moment of inertia of the cross section. From \eqref{eq:def_vec_epsilon}, the non-zero components of the strain tensor $\varepsilon \subset \vec{\varepsilon}$ are given by
\begin{equation}
{\varepsilon}(\CX,t)= \begin{bmatrix}
\\[-7mm]
{\varepsilon}_4(\CX,t) \\[-1mm] {\varepsilon}_5(\CX,t) \end{bmatrix} = \begin{bmatrix}
\\[-7mm]
\;\;\x{3}\,\p_1\theta(\X,t) \\[-1mm] -\x{2}\,\p_1 \theta(\X,t)
\end{bmatrix}
\end{equation}
From Proposition \ref{prop:2} we choose $m=2$, then 
\begin{equation}
\varepsilon(\CX,t) = \ub{ \begin{bmatrix}
\\[-7mm]
\x{3} & 0 \\[-1mm] 0 & -\x{2}
\end{bmatrix} }{\lambda_2(\X^c)} \ub{ \begin{bmatrix}
\p_1 \\[-1mm] \p_1
\end{bmatrix} }{\mathcal{F}} \ub{ \theta(\X,t) }{\textbf{r}(\X,t)} \quad \to \quad \q(\X_1,t) = \mathcal{F}\,\textbf{r}(\X,t) = \begin{bmatrix}
\p_1 \theta(\X,t) \\[-1mm] \p_1 \theta(\X,t)
\end{bmatrix}
\end{equation}
with $\mathcal{F}$ a differential operator of order $N=1$. The stress $\sigma(\CX,t)$ is obtained from $\sigma(\CX,t) = C \, \varepsilon(\CX,t) \in \R{2}$, where the constitutive matrix is given by $C = \begin{bsmallmatrix} G & 0 \\ 0 & G \end{bsmallmatrix}$ with $G$ the shear modulus. Then, from \eqref{eq:def_Stiffness_matrix_distributed} we have 
$
\mathcal{K}(\X) = G \dint_{A}  \begin{bmatrix}
\x{3}^2 & 0 \\[-2mm] 0 & \x{2}^2
\end{bmatrix}  \,dA =  \begin{bmatrix}
G I_{t_3}(\X) & 0 \\[-2mm] 0 & G I_{t_2}(\X) 
\end{bmatrix}    
$,
with $I_{t_2}(\X)$ and $I_{t_2}(\X)$ the transverse moments of inertia of the cross section. Notice that for a circular cross section we have $I_{t_2} = I_{t_3} \equiv I_t$, and $I_p = 2 I_t$. Considering that there are no distributed inputs, from Theorem \ref{theo:PHS_SD} we have
Considering that there are no distributed inputs, from \eqref{eq:def_dPHS_prop3} we have
\begin{equation}
\begin{bmatrix}
\\[-6mm]
\dot{p} \\[-1.5mm] \dot{\q}_1 \\[-1.5mm] \dot{\q}_2
\end{bmatrix} = \left[ \begin{array}{c | c c}
\\[-6mm]
0 & \p_1 &  \p_1 \\[-0.5mm]
\hline \\[-6.5mm]
\p_1 & 0 & 0  \\[-1.5mm]
\p_1 & 0 & 0 
\end{array} \right] \begin{bmatrix}
\\[-7mm]
e_{p_1} \\[-1.5mm] e_{\q_1} \\[-1.5mm] e_{\q_2}
\end{bmatrix} 
\label{eq:torsion_2var}
\end{equation}
with Hamiltonian defined as in \eqref{eq:def_HAMILTONIANO} given by
\begin{equation}
H[p,\q] = \mfrac{1}{2}\dint_{a}^{b} \mfrac{p^2}{\rho A} +  G I_t \q_1^2 +  G I_t \q_2^2 \, d\X
\end{equation}
The boundary ports are defined from Remark \ref{rem:boundary_ports_N1} by
\begin{equation}
\begin{array}{rl}
\p_t{H}  =  
\dint_{\p\Omega} 
e_{p}
\ub{\begin{bmatrix} 
\hat{n}_1 & \hat{n}_1
\end{bmatrix}}{P_\p}
\begin{bmatrix}
\\[-7mm]
e_{\q_1} \\[-2mm] e_{\q_2}
\end{bmatrix} d\sx 
= -2e_p(a)e_{\q_1}(a) + 2e_p(b)e_{\q_1}(b)
= \dint_{\p \Omega} \!  y_\p^\top u_\p  \, d\sx 
\end{array}
\label{eq:torsion_2var_BC}
\end{equation}
$ $\\[-4mm]
since $e_{\q_1}(\X,t) = e_{\q_2}(\X,t)$. Moreover, due to $\q_1(\X,t)=\q_2(\X,t) \equiv \bar{\q}(\X,t) $ and $I_p = 2I_t$, we have $e_{\q_1}(\X,t)+e_{\q_2}(\X,t) \equiv  e_{\bar{\q}}(\X,t) = G I_p \bar{\q}(\X,t)$. Then, the model \eqref{eq:torsion_2var} can be written equivalently as
\begin{equation}
\begin{bmatrix}
\\[-6mm]
\dot{p} \\[-1mm] \dot{\bar{\q}} 
\end{bmatrix} = \begin{bmatrix}
\\[-6mm]
0 & \p_1 \\[-1mm] \p_1 & 0
\end{bmatrix} \begin{bmatrix}
\\[-7mm]
e_{p} \\[-1mm] e_{\bar{\q}} 
\end{bmatrix} 
\end{equation}
$ $\\[-4mm]
with Hamiltonian defined as 
\begin{equation}
H[p,\bar{\q}] = \mfrac{1}{2}\dint_{a}^{b} \mfrac{p^2}{\rho A} +  G I_p \bar{\q}^2 \, d\X
\end{equation}
$ $\\[-2mm]
and boundary ports defined from Remark \ref{rem:boundary_ports_N1} as
\begin{equation}
\p_t{H} =  \dint_{\p\Omega} \!\! e_p\, e_{\bar{\q}} \,\hat{n}_{1}\, d\mathtt{s}
=  -e_p(a) e_{\bar{\q}}(a)+ e_p(b)e_{\bar{\q}}(b) = \dint_{\p \Omega} \! y_\p^\top u_\p \,d\sx
\end{equation}
which is completely analogous to the expression in \eqref{eq:torsion_2var_BC}. 


\subsection{Reddy's beam}

\noindent Consider a three-dimensional body that can be treated as a one-dimensional structure as shown in Figure \ref{fig:papa_1D}. \noindent Based on the Reddy's third order shear deformation theory, the displacement field is given by
\begin{equation}
\textbf{u}(\CX,t) = \ub{\begin{bmatrix}
-(\x{3}-\alpha \x{3}^3) & 0 & -\alpha \x{3}^3  \\[-2mm] 0 & 0 & 0 \\[-2mm] 0& 1 & 0 
\end{bmatrix}}{\lambda_1(\X^c)} \ub{ \begin{bmatrix}
\psi(\X,t) \\[-2mm] w(\X,t) \\[-2mm] \p_1 w(\X,t)  
\end{bmatrix} }{\textbf{r}(\X,t)} 
\end{equation}
with $\X = \lbrace \x{1} \rbrace $, $\X^c = \lbrace \x{2},\x{3} \rbrace$, $\Omega = (a,b) \subset \R{}$, $\Omega^c = A \subset \R{2}$,  $\alpha = \frac{4}{3h^2}$,  $\psi(\X,t) \in \R{}$ is the rotation of normals to neutral axis (see Figure \ref{fig:EB_TIM_assumptions}.b), $w(\X,t) \in \R{}$ is the vertical displacemt in the direction of the $\x{3}$ axis, and $\p_1 w(\X,t) \in \R{}$ is the slope of the neutral axis. From \eqref{eq:def_Mass_matrix_distributed} and \eqref{eq:def_p_distributed} we have
\begin{equation}
\mathcal{M}(\X) = \rho(\X)\dint_{A}\!\! \begin{bmatrix}
(\x{3} ^2 -2\alpha\x{3}^4 + \alpha^2 \x{3}^6) & 0 & \alpha(\x{3}^4 - \alpha \x{3}^6)\\[-1.5mm]
0 &  1 & 0 \\[-1.5mm]
\alpha(\x{3}^4 - \alpha \x{3}^6) & 0 & \alpha^2 \x{3}^6
\end{bmatrix} dA = \rho(\X)\begin{bmatrix}
(I_2-2\alpha I_4 + \alpha^2 I_6) & 0 & \alpha (I_4-\alpha I_6)\\[-1.5mm] 0 & I_0 & 0 \\[-1.5mm]
\alpha (I_4-\alpha I_6) & 0 & \alpha^2 I_6
\end{bmatrix}
\end{equation}
and
\begin{equation}
p(\X,t) = \mathcal{M}(\X)\dot{\textbf{r}}(\X,t) = \begin{bmatrix} \rho(\X)(I_2-2\alpha I_4 + \alpha^2 I_6) \dot{\psi}(\X,t)  +  \rho(\X)\alpha(I_4-\alpha I_6)\p_1 \dot{w}(\X,t)  \\[-1mm] 
\rho(\X) I_0 \dot{w}(\X,t) \\[-1mm]
\rho(\X)\alpha (I_4-\alpha I_6) \dot{\psi}(\X,t) + \rho(\X) \alpha^2 I_6 \p_1 \dot{w}(\X,t)
\end{bmatrix}
\end{equation}
where $I_i \in \R{}$ with $i=0,1,2,\dots$ is defined as
$
I_i(\X) = \int_A \x{3}^i \, dA
$. 
Note that $I_0(\X)$ is the area of the cross section, $I_2(\X)$ is the second moment of inertia of the cross section, and in general $I_i(\X)$ is the $i^{th}$ moment of inertia of the cross section. From \eqref{eq:def_vec_epsilon}, the non-zero components of the strain tensor $\varepsilon \subset \vec{\varepsilon}$ are given by
\begin{equation}
{\varepsilon}(\CX,t)= \begin{bmatrix}
\\[-7mm]
{\varepsilon}_1(\CX,t) \\[-1mm] {\varepsilon}_5(\CX,t) \end{bmatrix} = \begin{bmatrix}
\\[-7mm]
-(\x{3}-\alpha \x{3}^3)\, [\p_1 \psi(\X,t)] - \alpha \x{3}^3 [\p_1^2 w(\X,t)] \\[1mm] (1-3\alpha \x{3}^2)[\p_1 w(\X,t) - \psi(\X,t)]
\end{bmatrix}
\label{eq:eps_highlight_Reddy_beam}
\end{equation}
\noindent From Proposition \ref{prop:2} we choose $m=3$ since there are three functions that are independent of $\X^c$ in the strain vector $\varepsilon(\CX,t)$ (highlighted in square brackets in \eqref{eq:eps_highlight_Reddy_beam}). Then we have
\begin{equation}
\hspace{-3mm}\varepsilon(\CX,t) = \ub{\begin{bmatrix}
 -(\x{3}-\alpha \x{3}^3) & 0  & - \alpha \x{3}^3 \\[-2mm]
 0 & (1-3\alpha \x{3}^2) & 0 
\end{bmatrix}
}{\lambda_2(\X^c)}
\ub{ 
\left[ \!\! \begin{array}{c c c }
\p_1 & 0 & 0 \\[-2mm] -1 & \p_1 & 0  \\[-2mm] 0 & 0 & \p_1 
\end{array} \!\! \right]
}{\F}
\ub{
\begin{bmatrix}
\psi(\X,t) \\[-2mm]
w(\X,t) \\[-2mm]
\p_1 w(\X,t) 
\end{bmatrix}
}{\textbf{r}(\X,t)}
\quad \to \quad
\q(\X,t) = \mathcal{F}\,\textbf{r}(\X,t) = \begin{bmatrix}
\p_1 \psi(\X,t) \\[-1.5mm] \p_1 w(\X,t) - \psi(\X,t) \\[-1.5mm] \p_1^2 w(\X,t)
\end{bmatrix}
\end{equation}
with $\mathcal{F}$ a differential operator of order $N=1$. The stress $\sigma(\CX,t)$ is obtained from Hooke's law $\sigma(\CX,t) = C \, \varepsilon(\CX,t)$, where $C = \begin{bsmallmatrix} E & 0 \\ 0 & G \end{bsmallmatrix} $, where $E,G$ are Young's modulus and shear modulus, respectively. Note that the correction factor $\kappa$ used in the Timoshenko beam (first-order shear deformation theory) is not necessary here since the kinematic assumption ensures that the shear strain is zero on the free surfaces and varies parabolically through the $\x{3}$ axis.  Then, 
from \eqref{eq:def_Stiffness_matrix_distributed} we have
\begin{equation}
\begin{array}{rl}
\mathcal{K}(\X) = & \dint_{A}  \begin{bmatrix}
E (\x{3} ^2 -2\alpha\x{3}^4 + \alpha^2 \x{3}^6) & 0 & E \alpha(\x{3}^4 - \alpha \x{3}^6)\\[-2mm]
 0 & G (1 -6\alpha\x{3}^2+9\alpha^2\x{3}^4) & 0 \\[-2mm]
 E \alpha(\x{3}^4 - \alpha \x{3}^6) & 0 & E \alpha^2 \x{3}^6
\end{bmatrix} dA \\[7mm]
=  & \HS{5}\begin{bmatrix}
E (I_2 -2\alpha I_4 + \alpha^2 I_6) & 0 & E \alpha(I_4 - \alpha I_6)\\[-2mm]
 0 & G (I_0 -6\alpha I_2+9\alpha^2I_4) & 0 \\[-2mm]
 E \alpha(I_4 - \alpha I_6) & 0 & E \alpha^2 I_6
\end{bmatrix}
\end{array}
\end{equation}
Considering that there are no distributed inputs, from \eqref{eq:def_dPHS_prop3} we have
\begin{equation}
\begin{bmatrix}
\dot{p}_1 \\[-1.5mm] \dot{p}_2 \\[-1.5mm] \dot{p}_3 \\[-1.5mm] \dot{\q}_1 \\[-1.5mm] \dot{\q}_2 \\[-1.5mm] \dot{\q}_3 
\end{bmatrix} = \left[ \begin{array}{c c c | c c c }
0 & 0 & 0 &  \p_1 & 1 & 0 \\[-1.5mm]
0 & 0 & 0 & 0 & \p_1 & 0  \\[-1.5mm]
0 & 0 & 0 & 0 & 0 & \p_1   \\[-0.5mm]
\hline \\[-6mm]
\p_1 & 0 & 0 & 0 & 0 & 0  \\[-1.5mm]
-1 & \p_1 & 0 & 0 & 0 & 0  \\[-1.5mm]
0 & 0 & \p_1 & 0 & 0 & 0
\end{array} \right] \begin{bmatrix}
e_{p_1} \\[-1.5mm] e_{p_2} \\[-1.5mm] e_{p_3} \\[-1.5mm] e_{\q_1} \\[-1.5mm] e_{\q_2} \\[-1.5mm] e_{\q_3}
\end{bmatrix} 
\label{eq:Reddy_beam}
\end{equation}
with Hamiltonian defined as in \eqref{eq:def_HAMILTONIANO}. The boundary ports are defined from Remark \ref{rem:boundary_ports_N1} by
\begin{equation}
\p_t H = \dint_{\p\Omega} \!\! e_p^\top 1_3 \, e_\q \,\hat{n}_{1}\, d\mathtt{s}
= -e_p(a)^\top e_\q(a)+ e_p(b)^\top e_\q(b) = \dint_{\p \Omega} \! y_\p^\top u_\p \,d\sx
\label{eq:Reddy_beam_BC}
\end{equation}


\subsection{Rayleigh beam}

\noindent Consider a three-dimensional body that can be treated as a one-dimensional structure as shown in Figure \ref{fig:papa_1D}. Since plane sections normal to the neutral line before deformation remain plane and normal to the neutral line after deformation, the displacement field of the Rayleigh beam is given by
\begin{equation}
\textbf{u}(\X,t) = \ub{\begin{bmatrix}
-\x{3} & 0   \\[-2mm] 0 & 0 \\[-2mm] 0 & 1 
\end{bmatrix}}{\lambda_1(\X^c)} \ub{ \begin{bmatrix}
\\[-6mm]
\p_1 w(\X,t) \\[-2mm] w(\X,t) 
\end{bmatrix} }{\textbf{r}(\X,t)}
\label{eq:disp_Rayleigh_beam}
\end{equation}
with $\X = \lbrace \x{1} \rbrace $, $\X^c = \lbrace \x{2},\x{3} \rbrace$, $\Omega = (a,b) \subset \R{}$, $\Omega^c = A \subset \R{2}$, $w(\X,t) \in \R{}$ is the vertical displacement in the direction of the $\x{3}$ axis, and $\p_1 w(\X,t) \in \R{}$ is the slope. From \eqref{eq:def_Mass_matrix_distributed} and \eqref{eq:def_p_distributed} we have
\begin{equation}
\mathcal{M}(\X) = \rho(\X) \dint_{A}\! \begin{bmatrix}
\x{3}^2 & 0 \\[-2mm] 0 &  1
\end{bmatrix} dA = \rho(\X)\begin{bmatrix}
I(\X) & 0 \\[-2mm] 0 & A(\X)
\end{bmatrix}
\quad , \quad
p(\X,t) = \mathcal{M}(\X)\dot{\textbf{r}}(\X,t) = \left[ \!\begin{array}{l} \rho(\X)I(\X) \p_1\dot{w}(\X,t) \\[-1mm] \rho(\X)A(\X) \dot{w}(\X,t)
\end{array} \! \right] 
\end{equation}
where $A(\X)$ is the area of the cross section, and $I(\X)$ is the second moment of inertia of the cross section. From \eqref{eq:def_vec_epsilon}, the only non-zero component of the strain tensor $\varepsilon \subset \vec{\varepsilon}$ is given by $\varepsilon(\CX,t) = \varepsilon_1  = -\x{3}\,\p_1^2 w(\X,t)$, then we choose $m=1$ since there is only one function independent of $\X^c$. From Proposition \ref{prop:2} we have
\begin{equation}
\varepsilon_1(\CX,t) = \ub{ \mfrac{-\x{3}}{2} }{\lambda_2(\X^c)} \ub{ \begin{bmatrix}
\p_1 & \p_1^2
\end{bmatrix} }{\mathcal{F}} \ub{ \begin{bmatrix}
\p_1  w(\X,t) \\[-1mm]  w(\X,t)
\end{bmatrix} }{{\textbf{r}}(\X,t)} \quad \to \quad \q(\X,t) = \mathcal{F}\,{\textbf{r}}(\X,t) = 2\,\p_1^2 w(\X,t)  
\end{equation}
with $\mathcal{F}$ a differential operator of order $N=2$. The stress $\sigma(\CX,t)$ is obtained from $\sigma(\CX,t) = C \, \varepsilon(\CX,t) \in \R{}$, where the constitutive matrix is given by $C = E$ with $E$ the Young's modulus. Then, from \eqref{eq:def_Stiffness_matrix_distributed} we have 
\begin{equation}
\mathcal{K}(\X) = \mfrac{E}{4} \!\dint_{A} \!  \x{3}^2  \,dA =  \mfrac{EI(\X)}{4}  
\end{equation}
Considering distributed inputs and that the work done is given by \eqref{eq:def_WE_2}, from Theorem \ref{theo:PHS_SD} we have
\begin{equation}
\begin{array}{rl}
\begin{bmatrix}
\dot{p}_1 \\[-1mm] \dot{p}_2 \\[-1mm] \dot{\q} 
\end{bmatrix} = & \begin{bmatrix}
0 & 0 & \p_1 \\[-1mm] 0 & 0 & -\p_1^2 \\[-1mm] \p_1 & \p_1^2 & 0
\end{bmatrix} \begin{bmatrix}
e_{p_1} \\[-1mm] e_{p_2} \\[-1mm] e_{\q} 
\end{bmatrix}
\end{array}
\label{eq:Rayleigh_beam}
\end{equation}
with Hamiltonian defined as in \eqref{eq:def_HAMILTONIANO} given by 
\begin{equation}
H[p,\q] = \mfrac{1}{2}\dint_{\Omega} \mfrac{{p}_1^2}{\rho I} + \mfrac{{p}_2^2}{\rho A} +  \mfrac{EI}{4} \q^2 \, d\X
\label{eq:H_Rayleigh_beam}
\end{equation}
The boundary ports are defined from Corollary \ref{cor:boundary_ports} by
\begin{equation}
\begin{array}{rl}
\p_t {H}  = \dint_{\p\Omega} \!\! \mathcal{B}(e_{{p}})^\top  \mathcal{Q}_\p(\mathtt{s}) \mathcal{B}( e_{\q}) \,  d\mathtt{s} = & (  e_{p_1}e_\q + \p_1 e_{p_2}e_\q - e_{p_2}\p_1 e_\q)|_a^b \\
 = & (  2 \p_1 e_{p_2}e_\q - e_{p_2}\p_1 e_\q)|_a^b = \dint_{\p\Omega} y_\p ^\top u_\p \,  d\mathtt{s}
\end{array}
\label{eq:Rayleigh_BC}
\end{equation}
where the second line in \eqref{eq:Rayleigh_BC} is due to $ e_{p_1} = \p_1 e_{p_2}$, which is a consequence of the differential relationship between the components of the displacement vector $\textbf{r}(\X,t)$ in \eqref{eq:disp_Rayleigh_beam}.

\subsection*{D.8$^\star$ \hspace{0.0mm} Euler-Bernoulli beam}

The Euler-Bernoulli beam is a particular case of Rayleigh beam where the effect of rotary inertia $\rho(\X) I(\X)$ is neglected. Then, the Euler-Bernoulli beam can be obtained from the Rayleigh model by eliminating the first equation of \eqref{eq:Rayleigh_beam}, eliminating the first row and column of $\mathcal{J} = \bsm{0 & -\F^*\\ \F & 0}$, and eliminating the contribution of $p_1(\X,t)$ in the Hamiltonian \eqref{eq:H_Rayleigh_beam}.  Also, for convenience we define $\bar{\q}(\X,t) = \frac{1}{2}\q(\X,t) = \p_1 w(\X,t)$, hence $e_{\q}= \frac{EI}{4}\q^2 \equiv e_{\bar{\q}}= EI \bar{\q}^2$. With the above the Euler-Bernoulli beam model is given by
\begin{equation}
\begin{array}{rl}
\begin{bmatrix}
\dot{p}_2 \\[-1mm] \dot{\bar{\q}} 
\end{bmatrix} = & \begin{bmatrix}
 0 & -\p_1^2 \\[-1mm]  \p_1^2 & 0
\end{bmatrix} \begin{bmatrix}
e_{p_2} \\[-1mm] e_{\bar{\q}} 
\end{bmatrix}
\end{array}
\label{eq:EB_beam}
\end{equation}
with Hamiltonian given by 
\begin{equation}
H[p_2,\bar{\q}] = \mfrac{1}{2}\dint_{\Omega} \, \mfrac{{p}_2^2}{\rho A} +  EI \bar{\q}^2 \, d\X
\label{eq:H_EB_beam}
\end{equation}
The boundary ports are defined from Corollary \ref{cor:boundary_ports} considering $\bar{\F} \equiv \p_1 ^2$, then we have
\begin{equation}
\p_t {H} = \dint_{\p\Omega} \!\! \mathcal{B}(e_{{p_2}})^\top  \mathcal{Q}_\p(\mathtt{s}) \mathcal{B}( e_{\bar{\q}}) \,  d\mathtt{s} = (  \p_1 e_{p_2}e_{\bar{\q}} - e_{p_2}\p_1 e_{\bar{\q}})|_a^b = \dint_{\p\Omega} y_\p ^\top u_\p \,  d\mathtt{s}
\end{equation}
which is completely analogous to the expression in \eqref{eq:Rayleigh_BC}.


\subsection{Kirchhoff-Rayleigh plate}

Consider a three-dimensional body that can be treated as a two-dimensional structure as shown in Figure \ref{fig:papa_2D} where $h \in \R{}$ is the thickness (constant). Similarly to the Rayleigh beam, this model is based on the following kinematic assumption: plane sections normal to the neutral line before deformation remain plane and normal to the neutral line after deformation. Then, the displacement field is given by
\begin{equation}
\textbf{u}(\CX,t) = \ub{\begin{bmatrix}
0 & -\x{3} & 0  \\[-2mm] -\x{3} & 0 & 0 \\[-2mm] 0 & 0 & 1 
\end{bmatrix}}{\lambda_1(\X^c)} \ub{ \begin{bmatrix}
\p_2 w(\X,t) \\[-1mm] \p_1 w(\X,t) \\[-1mm]  w(\X,t) 
\end{bmatrix} }{\textbf{r}(\X,t)}
\label{eq:u_KR}
\end{equation}
with $\X = \lbrace \x{1}, \x{2} \rbrace $, $\X^c = \lbrace \x{3} \rbrace$, $\Omega \subset \R{2}$, $\Omega^c = (-\frac{h}{2}\times \frac{h}{2}) \subset \R{}$, $w(\X,t) \in \R{}$ is the vertical displacement in the direction of the axis $\x{3}$, $\p_1 w(\X,t) \in \R{}$ and $\p_2 w(\X,t) \in \R{}$ are the slopes. From \eqref{eq:def_Mass_matrix_distributed} and \eqref{eq:def_p_distributed} we have
\begin{equation}
\mathcal{M}(\X) = \rho(\X) \dint_{-h/2}^{h/2}\!\! \begin{bmatrix}
\x{3}^2 & 0 & 0 \\[-2mm] 0 & \x{3}^2 & 0 \\[-2mm] 0 & 0 & 1
\end{bmatrix} d\x{3} = \rho(\X)\begin{bmatrix}
\bar{I}_2 & 0 & 0 \\[-2mm] 0 & \bar{I}_2 & 0 \\[-2mm] 0 & 0 & \bar{I}_0
\end{bmatrix}
\quad , \quad
p(\X,t) = \mathcal{M}(\X)\dot{\textbf{r}}(\X,t) =  \begin{bmatrix}
\rho(\X)\bar{I}_2\,\p_2\dot{w}(\X,t) \\[-1mm] \rho(\X)\bar{I}_2\,\p_1\dot{w}(\X,t) \\[-1mm] \rho(\X)\bar{I}_0\, \dot{w}(\X,t)
\end{bmatrix}
\end{equation}
where $\bar{I}_i \in \R{}$ with $i=0,2\dots$ is defined as 
$
\bar{I}_i = \int_{-h/2}^{h/2} \x{3}^i \, d\x{3} = \mfrac{h^{i+1}}{2^i(i+1)}
$. From \eqref{eq:def_vec_epsilon}, the non-zero components of the strain tensor $\varepsilon \subset \vec{\varepsilon}$ are given by
\begin{equation}
{\varepsilon}(\CX,t)= \begin{bmatrix}
{\varepsilon}_1(\CX,t) \\[-1mm] {\varepsilon}_2(\CX,t) \\[-1mm] {\varepsilon}_4(\CX,t)
\end{bmatrix} = \begin{bmatrix}
-\x{3}\, [\p_1^2 w(\X,t)]  \\[-1mm] -\x{3}\, [\p_2^2 w(\X,t)] \\[-1mm] -\x{3} \, [\p_2 \p_1 w(\X,t) + \p_1 \p_2 w(\X,t)]
\end{bmatrix}
\label{eq:eps_highlight_KR}
\end{equation}
\noindent From Proposition \ref{prop:2} we choose $m=3$ since there are three functions that are independent of $\X^c$ in the strain vector $\varepsilon(\CX,t)$ (highlighted in square brackets in \eqref{eq:eps_highlight_KR}). Then we have
\begin{equation}
\hspace{-3mm}\varepsilon(\CX,t) = \ub{\begin{bmatrix}
-\x{3} & 0  & 0  \\[-2mm]
 0 & -\x{3} & 0  \\[-2mm]
 0 & 0 & -\x{3} 
\end{bmatrix}
}{\lambda_2(\X^c)}
\ub{ 
\left[ \!\! \begin{array}{c c c }
0 & 0 & \p_1^2 \\[-1mm] 0 & 0 & \p_2^2  \\[-1mm] \p_1 & \p_2 & 0
\end{array} \!\! \right]
}{\F}
\ub{
\begin{bmatrix}
\p_2 w(\X,t) \\[-1mm]
\p_1 w(\X,t) \\[-1mm]
w(\X,t) 
\end{bmatrix}
}{\textbf{r}(\X,t)}
\quad \to \quad
\q(\X,t) = \mathcal{F}\,\textbf{r}(\X,t) = \begin{bmatrix}
\p_1^2 w(\X,t) \\[-1mm] \p_2^2 w(\X,t) \\[-1mm] \p_2 \p_1 w(\X,t) + \p_1 \p_2 w(\X,t) 
\end{bmatrix}
\end{equation}
with $\mathcal{F}$ a differential operator of order $N=2$. 
Considering an isotropic material, the stress $\sigma(\CX,t)$ is obtained from $\sigma(\CX,t) = C_b \, \varepsilon(\CX,t) \in \R{3}$, where $
 C_b = \frac{E}{(1-\nu^2)} \begin{bsmallmatrix}
1 & \nu & 0 \\[-0mm] \nu & 1 & 0 \\[-0mm] 0 & 0 & \frac{(1-\nu)}{2}
\end{bsmallmatrix}
$, 
where $E,\nu$ are Young's modulus and Poisson ratio, respectively. Then, from \eqref{eq:def_Stiffness_matrix_distributed} we have $ \mathcal{K}(\X) = \int_{-h/2}^{h/2}  \x{3}^2\, C_b \,d\x{3} = \bar{I}_2 \,C_b 
$. 
Considering that there are no distributed inputs, from \eqref{eq:def_dPHS_prop3} we have
\begin{equation}
\begin{bmatrix}
\dot{p}_1 \\[-1.5mm] \dot{p}_2 \\[-1.5mm] \dot{p}_3 \\[-1.5mm] \dot{\q}_1 \\[-1.5mm] \dot{\q}_2 \\[-1.5mm] \dot{\q}_3 
\end{bmatrix} = \left[ \begin{array}{c c c | c c c }
0 & 0 & 0 &  0 & 0 & \p_1 \\[-1.5mm]
0 & 0 & 0 & 0 & 0 & \p_2  \\[-1.5mm]
0 & 0 & 0 & -\p_1^2 & -\p_2^2 & 0   \\[-0.5mm]
\hline \\[-6mm]
0 & 0 & \p_1^2 & 0 & 0 & 0  \\[-1.5mm]
0 & 0 & \p_2^2 & 0 & 0 & 0  \\[-1.5mm]
\p_1 & \p_2 & 0 & 0 & 0 & 0
\end{array} \right] \begin{bmatrix}
e_{p_1} \\[-1.5mm] e_{p_2} \\[-1.5mm] e_{p_3} \\[-1.5mm] e_{\q_1} \\[-1.5mm] e_{\q_2} \\[-1.5mm] e_{\q_3}
\end{bmatrix} 
\label{eq:KR_plate}
\end{equation}
with Hamiltonian defined as in \eqref{eq:def_HAMILTONIANO}, that is \\[-1mm]
\begin{equation}
H[p,\q] = \mfrac{1}{2} \dint_{\Omega} \mfrac{ p_1^2}{\rho \bar{I}_2} + \mfrac{ p_2^2}{\rho \bar{I}_2} + \mfrac{ p_3^2}{\rho \bar{I}_0} + \bar{I}_2 C_b\, \q \cdot \q \, d\X
\label{eq:H_KR_plate}
\end{equation}
The boundary ports are defined from Corollary \ref{cor:boundary_ports} by
\begin{equation}
\begin{array}{rl}
\p_t H = &\dint_{\p\Omega} \!\! \mathcal{B}(e_{{p}})^\top  \mathcal{Q}_\p(\mathtt{s}) \mathcal{B}( e_{\q}) \,  d\mathtt{s} =  \dint_{\p\Omega} e_{p_1}\hat{n}_1 e_{\q_3} + e_{p_2}\hat{n}_2 e_{\q_3} - e_{p_3}(\hat{n}_1 \p_1 e_{\q_1} + \hat{n}_2 \p_2 e_{\q_2} ) + \p_1 e_{p_3}\hat{n}_1 e_{\q_1} + \p_2 e_{p_3}\hat{n}_2 e_{\q_2}      \, d\mathtt{s} \\[2mm]
= & 
\dint_{\p\Omega} e_{p_1}( \hat{n}_2 e_{\q_2} + \hat{n}_1 e_{\q_3}) + e_{p_2} (\hat{n}_1 e_{\q_1} + \hat{n}_2 e_{\q_3}) - e_{p_3}(\hat{n}_1 \p_1 e_{\q_1} + \hat{n}_2 \p_2 e_{\q_2} ) \, d\mathtt{s} 
 = \dint_{\p\Omega} y_\p ^\top u_\p \,  d\mathtt{s}
\end{array}
\label{eq:KR_plate_BC}
\end{equation}
where the second line in \eqref{eq:KR_plate_BC} is due to $ \p_1 e_{p_3}= e_{p_2} $, and $  \p_2 e_{p_3} = e_{p_1   }$, which is a consequence of the differential relationships between the components of the displacement vector $\textbf{r}(\X,t)$ in \eqref{eq:u_KR}.

\subsubsection{Tensor representation}

\noindent Consider $ \mathcal{F} = \begin{bsmallmatrix} 0 & \mbox{Grad$\,\circ\,$grad}  \\[0mm]
\mbox{div} & 0 \end{bsmallmatrix} $, $ \mathcal{F}^* = -\begin{bsmallmatrix}
0 & \mbox{grad}  \\[0mm] -\mbox{div$\,\circ\,$Div} & 0 \end{bsmallmatrix}$, $ \textbf{r}(\X,t) = \begin{bsmallmatrix}
\theta(\X,t)  \\[0mm] w(\X,t) \end{bsmallmatrix} $, 
where $\theta(\X,t) = [\p_2 w(\X,t) \;\; \p_1 w(\X,t)]^\top \in \R{2}$ groups the slopes. The mass matrix $\mathcal{M}(\X)$ and stiffness matrix  $ {\mathcal{K}}(\X)$ can be rewritten as 
$
\mathcal{M}(\X) = \rho(\X)\begin{bsmallmatrix}
\bar{I}_2 1_2 & 0  \\[0mm] 0 & \bar{I}_0 
\end{bsmallmatrix}
$, and $
\doubleunderline{\mathcal{K}}(\X) = \doubleunderline{C}(\cdot) 
$, 
where $\doubleunderline{C}(\cdot)= D \left[(1-\nu)\mbox{diag}(\cdot)1_{1 \times 2} \odot 1_2 + \nu \mbox{tr}(\cdot)1_2 \right]$ is a suitable constitutive tensor with $D = \frac{\bar{I}_2 E }{(1-\nu^2)} $ the plate bending stiffness, $1_{1\times 2} = [1 \;\, 1] \in \R{1 \times 2}$, $1_2 \in \R{2 \times 2}$ is the identity matrix, $\mbox{diag}(\cdot)$ is a column vector with the elements of the diagonal of $(\cdot)$, and $\odot$ denotes the element-wise product (Hadamard product). With the above we redefine the energy variables as
\begin{equation}
\hspace{-0mm} p(\X,t) \! = \! \begin{bmatrix}
p_{\theta}(\X,t) \\[-1mm] p_{w}(\X,t) 
\end{bmatrix} \! = \! \begin{bmatrix}
\rho(\X) \bar{I}_2 \dot{\theta}(\X,t) \\[-1mm] \rho(\X)\bar{I}_0 \dot{w}(\X,t) 
\end{bmatrix}  , \;\;
\underline{\q}(\X,t) \! = \! \begin{bmatrix}
\q_{\theta}(\X,t) \\[-1mm] \q_{w}(\X,t) 
\end{bmatrix} \! =  \! \begin{bmatrix}
\mbox{Grad$\,\circ\,$grad}(w(\X,t)) \\[-1mm] \mbox{div}(\theta(\X,t))
\end{bmatrix}
\end{equation}
and co-energy variables as  
\begin{equation}
{e_p}(\X,t) = \begin{bmatrix}
e_{p_\theta}(\X,t) \\[-1mm] e_{p_w}(\X,t)  
\end{bmatrix} = \begin{bmatrix}
\dot{\theta}(\X,t) \\[-1mm] \dot{w}(\X,t) 
\end{bmatrix}
\quad , \quad
\underline{e_\q}(\X,t) = 
\begin{bmatrix}
e_{\q_\theta}(\X,t) \\[-1mm] e_{\q_w}(\X,t) 
\end{bmatrix} =
\begin{bmatrix}
\doubleunderline{C}(\q_\theta(\X,t))  \\[1mm]
\frac{D(1-v)}{2} \q_w(\X,t) 
\end{bmatrix}
\end{equation}
$ $\\[-5mm]
where
$$
\begin{array}{rrrr}
\q_{\theta}(\X,t) = \begin{bmatrix}
\q_1(\X,t) & \frac{1}{2}\q_3(\X,t) \\[0mm] \frac{1}{2}\q_3(\X,t) & \q_2(\X,t) 
\end{bmatrix} , &
\q_{w}(\X,t) = \q_3(\X,t) , & \quad
e_{\q_{\theta}}(\X,t) = \begin{bmatrix}
e_{\q_1}(\X,t) & 0 \\[0mm] 0 & e_{\q_2}(\X,t) 
\end{bmatrix} , &
e_{\q_{w}}(\X,t) = e_{\q_3}(\X,t) 
\end{array}
$$
Then, the model in \eqref{eq:KR_plate} written using tensor notation is given by
\begin{equation}
\begin{array}{r}
\ub{\begin{bmatrix}
\dot{p}_{\theta} \\[-1mm] \dot{p}_{w} \\[0.5mm]
\hline \\[-6mm]
\dot{\q}_{\theta} \\[-1mm] \dot{\q}_{w}
\end{bmatrix}}{\dot{x}} =   \ub{\left[ \!\begin{array}{c c | c c}
0 & 0  & 0  & \mbox{grad}  \\[-1mm]
0 & 0  &-\mbox{div$\,\circ\,$Div}  & 0  \\[-0.5mm]
\hline \\[-6mm]
0  &   \mbox{Grad$\,\circ\,$grad} &  0 & 0  \\[-1mm]
\mbox{div}  &  0 & 0 & 0 
\end{array} \! \right]}{\mathcal{J}=-\mathcal{J}^*} \ub{\begin{bmatrix}
{e}_{p_\theta} \\[-1mm] {e}_{p_w} \\[0.5mm]
\hline \\[-6mm]
e_{\q_{\theta}} \\[-1mm] e_{\q_{w}} 
\end{bmatrix}}{\var_x H} 
\end{array}
\label{eq:KR_plate_tensor}
\end{equation}
with Hamiltonian functional $H[p,\underline{\q}]$ given by 
\begin{equation}
H[p,\underline{\q}] = \mfrac{1}{2} \dint_{\Omega} \!  \mfrac{1}{\rho \bar{I}_2} \, p_\theta \cdot \! p_\theta + \frac{p_w ^2}{\rho \bar{I}_0}  + \doubleunderline{C}(\q_{\theta}) \!:\!\q_{\theta} + \mfrac{D(1-\nu)}{2} \q_w^2 \, d\X
\label{eq:H_KR_plate_tensor}
\end{equation}
\noindent The boundary variables are obtained from the energy balance which is given by
\begin{equation}
\begin{array}{rl}
\p_t H =  \langle \var_x H,\, \dot{x} \rangle_{in}^{\Omega}  = & 
 \dint_{\Omega}  [e_{p_\theta}\!\cdot \mbox{grad}(e_{\q_w}) +  e_{\q_w} \mbox{div}(e_{p_\theta}) ] +  [e_{\q_\theta}\!:\mbox{Grad$\,\circ\,$grad}(e_{p_w}) - e_{p_w} \mbox{div$\,\circ\,$Div}(e_{\q_\theta})] \,  d\X
\end{array}
\end{equation}
so applying the divergence theorem to the first term above, and integration by parts to the second term (see \cite[Remark 6]{brugnoli2019port2}), then we obtain
\begin{equation}
\begin{array}{r}
\p_t H =  \dint_{\p\Omega} \left[ e_{\q_w}\,(e_{p_\theta}\!\cdot \hat{n})  \right] + \left[ (\hat{n} \otimes \mbox{grad}(e_{p_w}))\!:e_{\q_\theta} - \hat{n}\!\cdot \mbox{Div}(e_{\q_\theta})\,e_{p_w} \right] d\mathtt{s} = \dint_{\p\Omega} y_\p^\top u_\p \,d\mathtt{s}
\end{array}
\end{equation}
where $ \hat{n} = [\hat{n}_1 \;\; \hat{n}_2]^\top $, and $\otimes$ denotes the dyadic product. The above expression is completely  analogous to  \eqref{eq:KR_plate_BC}.

\subsection*{D.9$^\star$ \hspace{0.0mm} Kirchhoff-Love plate}

The Kirchhoff-Love plate model is a particular case of the Kirchhoff-Rayleigh plate where the effect of rotary inertia $\rho(\X) \bar{I}_2$ is neglected. Then, the Kirchhoff-Love plate can be obtained from the Kirchhoff-Rayleigh model by eliminating the contribution of $p_1(\X,t)$ and $p_2(\X,t)$ in the Hamiltonian \eqref{eq:H_KR_plate}, and  moving the effect of the first two equations of \eqref{eq:KR_plate} to the third, that is, by eliminating the first two rows and columns of $\mathcal{J} = \bsm{0 & -\F^*\\ \F & 0}$ and redefining the differential operator. 
\begin{equation}
\F = \begin{bmatrix}
0 & 0 & \p_1^2 \\[-1.5mm]
0 & 0 & \p_2^2 \\[-1.5mm]
\p_1 & \p_2 & 0 
\end{bmatrix} 
\quad \to \quad
\bar{\F} = \begin{bmatrix}
 \p_1^2 \\[-1.5mm]
 \p_2^2 \\[-1.5mm]
2\p_1\p_2 
\end{bmatrix}
,\quad
\bar{\F}^* = \begin{bmatrix}
 \p_1^2 &  \p_2^2 & 2\p_1\p_2 
\end{bmatrix}
\end{equation}
With the above the Kircchoff-Love plate model is given by
\begin{equation}
\begin{array}{rl}
\begin{bmatrix}
\dot{p}_3 \\[-1.5mm] \dot{{\q}}_1  \\[-1.5mm] \dot{{\q}}_2  \\[-1.5mm] \dot{{\q}}_3
\end{bmatrix} = & \begin{bmatrix}
 0 & -\p_1^2 & -\p_2^2 & -2\p_1\p_2 \\[-1.5mm]  
\p_1^2 &  0 & 0 & 0  \\[-1.5mm] 
\p_2^2 &  0 & 0 & 0  \\[-1.5mm]
2\p_1\p_2 &  0 & 0 & 0 
\end{bmatrix} \begin{bmatrix}
e_{p_3} \\[-1.5mm] e_{{\q_1}} \\[-1.5mm] e_{{\q_2}} \\[-1.5mm] e_{{\q_3}}
\end{bmatrix}
\end{array}
\label{eq:KL_plate}
\end{equation}
with Hamiltonian given by 
\begin{equation}
H[p_3,\q] = \mfrac{1}{2} \dint_{\Omega} \mfrac{ p_3^2}{\rho \bar{I}_0} + \bar{I}_2 C_b\, \q \cdot \q \, d\X
\label{eq:H_KL_plate}
\end{equation}
Note that $\bar{\F}$ does not belong to the class of differential operators considered in Definition \ref{def:operadores_ND}, therefore the boundary terms cannot be obtained from Corollary \ref{cor:boundary_ports}. Despite this, they can be obtained from the energy balance and applying Green theorem considering the split mixed derivative $2\p_1\p_2 = \p_1\p_2 + \p_2\p_1$, that is 
\begin{equation}
\begin{array}{rl}
\HS{-1}\p_t {H} = & \langle \var_x H,\, \dot{x} \rangle_{in}^{\Omega}  = 
\dint_{\Omega} - e_{p_3}(\p_1^2 e_{\q_1} + \p_2^2 e_{\q_2} + 2\p_1\p_2 e_{\q_3}) + e_{\q_1}\p_1^2 e_{p_3}  + e_{\q_2}\p_2^2 e_{p_3} +  e_{\q_3}2\p_1\p_2 e_{p_3}\, d\X \\
= & \!\!\!\dint_{\p\Omega} \!\!\! \hat{n}_1(e_{\q_1} \p_1 e_{p_3} \!+\! e_{\q_3} \p_2 e_{p_3} \!-\! e_{p_3} \p_1 e_{\q_1}  ) \!+\! \hat{n}_2(e_{\q_2} \p_2 e_{p_3} \!+\! e_{\q_3} \p_1 e_{p_3} \!-\! e_{p_3} \p_2 e_{\q_2}) \!-\! e_{p_3}( \hat{n}_1\p_2 e_{\q_3} \!+\! \hat{n}_2\p_1 e_{\q_3})  \,  d\mathtt{s} 
\end{array}
\label{eq:KL_plate_BC}
\end{equation}
which is different to the expression in \eqref{eq:KR_plate_BC}, since the above \eqref{eq:KL_plate_BC} has the last additional term that arises from the mixed derivatives.

\subsubsection{Tensor representation}

Consider $\bar{\F} = \mbox{Grad$\,\circ\,$grad}$, and $\bar{\F}^* = \mbox{div$\,\circ\,$Div}$. The mass matrix $\mathcal{M}(\X)$ and stiffness matrix  $ {\mathcal{K}}(\X)$ are rewritten as 
$
\bar{\mathcal{M}}(\X) = \rho(\X) \bar{I}_0 
$, and $
\doubleunderline{\mathcal{K}}(\X) = \doubleunderline{\mathcal{D}}(\cdot) 
$, 
where $\doubleunderline{\mathcal{D}}(\cdot)= D \left[(1-\nu)(\cdot) + \nu \mbox{tr}(\cdot)1_2 \right]$ is the internal bending moment constitutive tensor for isotropic plates. With the above consider the following energy variables and co-energy variables 
\begin{equation}
 p_{w}(\X,t) \! = \!  \rho(\X)\bar{I}_0 \dot{w}(\X,t)  
, \quad
\q_{\theta}(\X,t) 
 \! =  \! 
\mbox{Grad$\,\circ\,$grad}(w(\X,t))
, \quad
e_{p_w}(\X,t)  
 =  \dot{w}(\X,t) 
, \quad
\underline{e_{{\q}_\theta}}(\X,t) = 
\doubleunderline{\mathcal{D}}(\q_\theta(\X,t))
\end{equation}
$ $\\[-5mm]
where
$ $\\[-5mm]
$$
\begin{array}{rrrr}
\q_{\theta}(\X,t) = \begin{bmatrix}
\q_1(\X,t) & \frac{1}{2}\q_3(\X,t) \\[0mm] \frac{1}{2}\q_3(\X,t) & \q_2(\X,t) 
\end{bmatrix} 
,& \quad
\underline{e_{{\q}_\theta}}(\X,t)  = \begin{bmatrix}
e_{\q_1}(\X,t) & e_{\q_3}(\X,t) \\[0mm] e_{\q_3}(\X,t) & e_{\q_2}(\X,t) 
\end{bmatrix} 
\end{array}
$$
Then, the model in \eqref{eq:KL_plate} written using tensor notation is given by
\begin{equation}
\begin{array}{r}
\ub{\begin{bmatrix}
\dot{p}_{w} \\[-1mm] \dot{\q}_{\theta} 
\end{bmatrix}}{\dot{x}} =   \ub{\left[ \!\begin{array}{c c}
 0  &-\mbox{div$\,\circ\,$Div}  \\[-0.5mm]
\mbox{Grad$\,\circ\,$grad} &  0  
\end{array} \! \right]}{\mathcal{J}=-\mathcal{J}^*} \ub{\begin{bmatrix}
{e}_{p_w} \\[-1mm] \underline{e_{\q_{\theta}}} 
\end{bmatrix}}{\var_x H} 
\end{array}
\label{eq:KL_plate_tensor}
\end{equation}
with Hamiltonian functional $H[p_w,{\q_\theta}]$ given by 
\begin{equation}
H[p_w,{\q_\theta}] = \mfrac{1}{2} \dint_{\Omega}   \frac{p_w ^2}{\rho \bar{I}_0}  + \doubleunderline{\mathcal{D}}(\q_{\theta}) \!:\!\q_{\theta} \, d\X
\label{eq:H_KL_plate_tensor}
\end{equation}
\noindent The boundary variables are obtained from the energy balance which is given by
\begin{equation}
\begin{array}{rl}
\p_t H =  \langle \var_x H,\, \dot{x} \rangle_{in}^{\Omega}  = & 
 \dint_{\Omega}   \underline{e_{\q_\theta}}\!:\mbox{Grad$\,\circ\,$grad}(e_{p_w}) - e_{p_w} \mbox{div$\,\circ\,$Div}(\underline{e_{\q_\theta}}) \,  d\X
\end{array}
\end{equation}
so applying integration by parts (see \cite[Remark 6]{brugnoli2019port2}), then we obtain
\begin{equation}
\begin{array}{r}
\p_t H =  \dint_{\p\Omega}   (\hat{n} \otimes \mbox{grad}(e_{p_w}))\!:\underline{e_{\q_\theta}} - \hat{n}\!\cdot \mbox{Div}(\underline{e_{\q_\theta}})\,e_{p_w} d\mathtt{s} = \dint_{\p\Omega} y_\p^\top u_\p \,d\mathtt{s}
\end{array}
\end{equation}
where $ \hat{n} = [\hat{n}_1 \;\; \hat{n}_2]^\top $ and $\otimes$ denotes the dyadic product. The above expression is completely  analogous to  \eqref{eq:KL_plate_BC}. For more details see \cite{brugnoli2019port2}.


\bibliographystyle{elsarticle-num}
\bibliography{autosam}           

\end{document}